\newtheorem{theorem}{Theorem}[section]
\newtheorem{remark}{Remark}[section]
\newtheorem{lemma}[theorem]{Lemma}
\newtheorem{defi}{Definition}[section]
\newtheorem{cor}{Corollary}[section]
\begin{document}

\title{A BGK model for gas mixtures of polyatomic molecules allowing for slow and fast relaxation of the temperatures 
}


\author{Marlies Pirner    
}



\date{}
\maketitle

\begin{abstract}
Kinetic models for polyatomic gases have two temperatures for the two different types of degrees of freedom, the translational and the internal energy degrees of freedom. Therefore, in the case of BGK models one expects two types of relaxations, a relaxation of the distribution function to a Maxwell distribution and a relaxation of the two temperatures to an equal value. The speed for the first type of relaxation may be faster or slower than the second type of relaxation. Models found in the literature  often allow  only for one of these two cases. We believe that a model should allow for both cases. That is why we derive a new multi-species polyatomic BGK model which allows for both regimes.
For this new model we prove conservation properties,  positivity of the temperatures, the H-theorem and characterize the equilibrium as a Maxwell distribution with equal temperatures. Moreover, we prove the convergence rate to equilibrium and that we can actually capture both regimes of the relaxation processes.

\textbf{keywords:} kinetic model, BGK model, polyatomic molecules, converge rate to equilibrium, entropy dissipation estimates  \\

\textbf{PACS subject classification:} 02.30.Jr , 05.20.Dd, 33.20.Vq\\

\textbf{AMS subject classification:} 35B40, 35Q99, 82B40
\end{abstract}

\section{Introduction}
 In this paper we shall concern ourselves with a kinetic description of gas mixtures for polyatomic molecules. In the case of mono atomic molecules and two species this is traditionally done via the Boltzmann equation for the density distributions $f_1$ and $f_2$, see for example \cite{Cercignani,Cercignani_1975}. Under certain assumptions the complicated interaction terms of the Boltzmann equation can be simplified by a so called BGK approximation, consisting of a collision frequency multiplied by the deviation of the distributions from local Maxwellians. This approximation should be constructed in a way such that it  has the same main properties of the Boltzmann equation namely conservation of mass, momentum and energy, further it should have an H-theorem with its entropy inequality and the equilibrium must still be Maxwellian.  BGK  models give rise to efficient numerical computations, which are asymptotic preserving, that is they remain efficient even approaching the hydrodynamic regime \cite{Puppo_2007,Jin_2010,Dimarco_2014,Bennoune_2008,Bernard_2015,Crestetto_2012}. Evolution of a polyatomic gas is very important in applications, for instance air consists of a gas mixture of polyatomic molecules. But, most kinetic models modelling air deal with the case of a mono atomic  gas consisting of only one species.
%

In contrast to mono atomic molecules, in a polyatomic gas energy is not entirely stored in the kinetic energy of its molecules but also in their rotational and vibrational modes. In addition, one has  two temperatures. One temperature is related to the translational degrees of freedom, the other temperature is related to the degrees of freedom in internal energy. Due to the principle of equipartition of energy, one expects from physics that in equilibrium the two temperatures coincides. Therefore, in the case of polyatomic molecules, one expects an additional relaxation of the two temperatures to a common value in addition to the relaxation of the distribution function to a Maxwell distribution. In this paper, we want to analyse the principles of realizing this in the case of different existing BGK models for polyatomic molecules in the literature, both for one species and for gas mixtures. In the case of one species, the first one is the model by Andries, Le Tallec, Perlat and Perthame \cite{Perthame}, the second one is the model of Klingenberg, Pirner and Puppo \cite{Pirner5} reduced to one species and the last one, the model of Bernard, Iollo, Puppo in \cite{Bernard}. In the case of gas mixture, we analyse the model presented in \cite{Pirner5}. For these models, we want to understand the principle of the additional relaxation of the temperatures, the physical regimes where it is reasonable to use them, and prove  the convergence rate to equilibrium in the space-homogeneous case in order to see if it reflects a reasonable physical behaviour.

We prove the convergence rate by considering the entropy dissipation estimates for the relative entropy. Then, the relative entropy can be related to the $L^1$-norm by using the Ciszar- Kullback inequality. This method is introduced for example in \cite{Matthes}, and is also used for example in \cite{BisiCaizoLods} for the linear Boltzmann operator, in \cite{YunES} for the ES-BGK model and in \cite{Yunpoly} for the model in \cite{Perthame}.

We will observe that two of the three models for one species and the model for gas mixtures have a restricted physical validity. Therefore, we will also present a new model for the case of gas mixtures which has a more general physical validity, and in addition, we prove that the convergence rate to equilibrium reflects a reasonable physical behaviour.


The outline of the paper is as follows: in section \ref{sec1} we will present three existing models for one species \cite{Perthame}, \cite{Pirner5} and \cite{Bernard} in the literature, prove the convergence rates to equilibrium in the space homogeneous case and discuss the physical validity of the three models. In section \ref{sec2.2} we will present an existing model for gas mixtures \cite{Pirner5} in the literature, prove the convergence rate to equilibrium in the space homogeneous case and discuss the physical validity. In section \ref{sec2}, we present a new model for gas mixtures of polyatomic molecules which has a larger physical validity and prove the H-theorem and the convergence rate to equilibrium for this new model.

\section{Comparison of entropy dissipation estimates for BGK models for one species of polyatomic molecules from the literature}
\label{sec1}
In a polyatomic gas there are two types of relaxations in order to reach equilibrium. In a polyatomic gas, one expects that the distribution function relaxes towards a Maxwell distribution. In addition, in a polyatomic gas, one has two different temperatures, one temperature which is related to the translational degrees of freedom and another temperature which is related to the degrees of freedom in internal energy. In equilibrium, one expects that the two temperatures are the same due to equipartition of energy. So one expects that the two temperatures relax towards an equal value in addition to the relaxation of the distribution functions to Maxwell distributions. Note that, there can be two different cases. The speed of relaxation of the two temperatures to an equal value can be slower or faster than the speed of relaxation to a Maxwell distribution. In the following  subsections, we analyse three models in the literature for one species of polyatomic molecules whether they cover both cases meaning if it is possible to describe both a faster and a slower relaxation of the temperatures. In addition, we prove the convergence rate to equilibrium in the space homogeneous case for each model, and check, if the rate of convergence always coincides with the slowest rate of relaxation.
\subsection{The BGK model of Andries, Le Tallec, Perlat and Perthame for one species of  polyatomic molecules}
\label{sec2.1}
We will now consider the BGK model for a single species  of polyatomic molecules presented in \cite{Perthame}. In \cite{Perthame}, they actually present an ES-BGK model. But in order to understand the effects leading to the relaxation of the distribution function to a Maxwell distribution and the relaxation of the temperatures to an equal value,  we want to look at the corresponding simpler BGK model.

In \cite{Perthame}, they consider a distribution function $f(x,v,I,t)$ depending on the position $x\in \mathbb{R}^3$, the velocity $v\in\mathbb{R}^3$ and internal energy $\varepsilon(I)= I^{\frac{2}{\delta}}$, $I\in\mathbb{R}^+$ at time $t>0$. The coefficient $\delta$ denotes the number of degrees of freedom in internal energy. In \cite{Perthame}, it is assumed that the mass of the particles is equal to $1$. In the following, we assume additionally  that $k_{B}=1$ in this model. The density $\rho$ and mean velocity $u$ are defined as 
\begin{align*}
\rho(x,t) = \int f(x,v, I, t) dv dI, ~ \rho(x,t) u(x,t) = \int f(x,v,I,t) v dv dI.
\end{align*}
 The energy is defined as 
$$ E(x,t) = \int \int \left(\frac{1}{2} |v|^2 + I^{\frac{\delta}{2}}\right) f(x,v,I,t) dv dI = \frac{1}{2} \rho(x,t) |u(x,t)|^2 + \rho(x,t) e(x,t).$$
The specific internal energy can be divided into 
$$ e_{tr} = \frac{1}{\rho} \int \int \frac{1}{2} |v-u|^2 f dv dI,$$
$$e_{int} = \frac{1}{\rho} \int \int  I^{\frac{2}{\delta}} f dv dI,$$
and associate with this the corresponding temperatures
$$e= e_{tr} + e_{int}= \frac{3+ \delta}{2} T_{equ},$$
$$ e_{tr}=\frac{3}{2}  T_{tr},$$
$$e_{int} = \frac{\delta}{2}  T_{int},$$
and define $T_{rel} = \theta T_{equ} + (1- \theta) T_{int},$ with $0< \theta \leq 1$. In \cite{Perthame}  the following  Gaussian for the single species BGK model is considered
$$ \widetilde{G}[f]= \frac{\rho \Lambda_{\delta}}{\sqrt{2 \pi T}^3} \frac{1}{ T_{rel}^{\frac{\delta}{2}}} \exp \left( - \frac{1}{2} \frac{|v-u|^2}{ T} - \frac{I^{\frac{\delta}{2}}}{ T_{rel}} \right),$$ 
with the temperature $T = (1- \theta)  T_{tr}  + \theta  T_{equ} $.  $\Lambda_{\delta}$ is a constant ensuring that the integral of $\widetilde{G}[f]$ with respect to $v$ and $I$ is equal to the density $\rho$. Then the model is given by
$$ \partial_t f + v \cdot \nabla_x f = A_{\nu} (\widetilde{G}[f] - f)$$
with the collision frequency $A_{\nu}$.

For this model one can prove conservation of the number of particles, momentum and total energy, and an H-theorem such that the equilibrium is characterized by a Maxwell distribution with equal temperatures $T_{equ}= T_{tr} = T_{int}$, for details see section 3 in \cite{Perthame}. It is also ensured that there exists a unique mild solution to this model. This is proven in \cite{Yunexpoly}.

The convex combination in $\theta$ takes into account that $T_{tr}$ and $T_{int}$ relax towards the common value $T_{equ}$. In the space-homogeneous case we see that we get the following macroscopic equations
\begin{align*}
\partial_t T_{tr} &= A_{\nu}  ( T_{tr} (1- \theta) + \theta T_{equ} - T_{tr})= A_{\nu} \theta (T_{equ} - T_{tr}),
\\
\partial_t T_{int} &= A_{\nu} \theta (T_{equ} - T_{int}) ,
\end{align*}
 These macroscopic equations describe a relaxation of $T_{tr}$ and $T_{int}$ towards $T_{equ}$. We see that the model captures only the regime where this relaxation is slower than the relaxation of the distribution function to a Maxwell distribution since $\theta$ satisfies $\theta \leq 1$, so it reduces the speed of relaxation from $A_{\nu}$ to $A_{\nu}  \theta$. 

This model satiesfies the following assymptotic behaviour proven in \cite{Yunpoly} in the space-homogeneous case.

\begin{theorem}
Let $0<\theta \leq 1.$ The distribution function for the spatially homogeneous case converges to equilibrium with the following rate:
$$||f(t) - M_{0,1}||_{L^1(dv dI)} \leq e^{- \frac{\theta}{2} A_{\nu} t }\sqrt{2 H(f_0 |M_{0,1})},$$
with the relative entropy $H(f|g)= \int \int f \ln \frac{f}{g} dv dI$ for two functions $f$ and $g$, and the Maxwell distribution $M_{0,1}$ given by
$$M_{0,1} = \frac{\rho \Lambda_{\delta}}{(2\pi T_{equ})^{3/2} (T_{equ})^{\delta/2}} e^{- \frac{|v-u|^2}{2 T_{equ}} - \frac{I^{2/\delta}}{T_{equ}}}.$$
\end{theorem}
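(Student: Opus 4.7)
The plan is to use the relative entropy as a Lyapunov functional: show that $H(f(t)|M_{0,1})$ dissipates at rate $\theta A_\nu$ along the space-homogeneous BGK flow, and then apply the Csisz\'ar--Kullback inequality $\|f-g\|_{L^1(dv\, dI)}\leq\sqrt{2\,H(f|g)}$. Since $\rho$, $u$ and the total energy are conserved in the space-homogeneous case, $T_{equ}$ and hence $M_{0,1}$ are time-independent, and the square root in Csisz\'ar--Kullback turns the entropic decay rate $\theta A_\nu$ into exactly the decay rate $\theta A_\nu/2$ in $L^1$ claimed by the theorem, with the initial entropy appearing as $\sqrt{2 H(f_0|M_{0,1})}$.

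To prove the entropic dissipation estimate, differentiate along the flow:
\begin{equation*}
\frac{d}{dt}H(f|M_{0,1}) = A_\nu\int(\widetilde{G}[f]-f)\,\ln\frac{f}{M_{0,1}}\, dv\, dI.
\end{equation*}
Because $\ln M_{0,1}$ is affine in the collision invariants $1,\,v,\,|v-u|^2,\,I^{2/\delta}$, because $f$ and $\widetilde{G}[f]$ share $\rho$ and $\rho u$, and because the identity $3(T_{tr}-T_{equ})+\delta(T_{int}-T_{equ})=0$ holds by conservation of total energy, one checks that $\int(\widetilde{G}[f]-f)\ln M_{0,1}\,dv\,dI=0$. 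Splitting $\ln f = \ln(f/\widetilde{G}[f])+\ln\widetilde{G}[f]$, the first piece is bounded above by $-A_\nu H(f|\widetilde{G}[f])$ via the elementary inequality $(a-b)(\ln a - \ln b)\geq 0$; the second piece is explicit since $\ln\widetilde{G}[f]$ is also affine in $|v-u|^2$ and $I^{2/\delta}$, and it reduces to the moments $\int(\widetilde{G}[f]-f)|v-u|^2 = -3\rho\theta(T_{tr}-T_{equ})$ and $\int(\widetilde{G}[f]-f)I^{2/\delta}=-\tfrac{\delta\rho}{2}\theta(T_{int}-T_{equ})$, both of which are direct consequences of the convex combinations $T=(1-\theta)T_{tr}+\theta T_{equ}$ and $T_{rel}=\theta T_{equ}+(1-\theta)T_{int}$.

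The main obstacle is then to close this into the differential inequality $\frac{d}{dt}H(f|M_{0,1})\leq -\theta A_\nu H(f|M_{0,1})$. My proposed strategy is to exploit the exact identity $H(f|M_{0,1}) = H(f|M_f)+H(M_f|M_{0,1})$, where $M_f$ denotes the local Maxwellian carrying the true temperatures $T_{tr}, T_{int}$; this identity holds because $M_f$ shares all of $\rho, u, T_{tr}, T_{int}$ with $f$, so that the cross term $\int(f-M_f)\ln(M_f/M_{0,1})$ vanishes. The macroscopic piece $H(M_f|M_{0,1})$ depends only on $(T_{tr},T_{int})$, and the closed ODEs $\partial_t T_{tr}=A_\nu\theta(T_{equ}-T_{tr})$, $\partial_t T_{int}=A_\nu\theta(T_{equ}-T_{int})$ derived earlier in the section give $|T_{tr}-T_{equ}|,\,|T_{int}-T_{equ}|\lesssim e^{-\theta A_\nu t}$, hence exponential decay of this piece at rate at least $\theta A_\nu$. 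The kinetic piece $H(f|M_f)$ is controlled by the BGK-type dissipation toward $\widetilde{G}[f]$ at rate $A_\nu\geq\theta A_\nu$, corrected by the explicit moment terms of the preceding paragraph; the weight $(1-\theta)$ appearing in $T$ and $T_{rel}$ is exactly what is needed to preserve the slower rate $\theta A_\nu$. Combining both pieces, integrating the resulting differential inequality, and then invoking Csisz\'ar--Kullback completes the argument.
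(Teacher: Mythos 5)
Your overall strategy --- differentiate the relative entropy $H(f|M_{0,1})$ along the space-homogeneous flow, close a differential inequality with rate $\theta A_\nu$, then apply Gronwall and Csisz\'ar--Kullback --- is exactly the route of Park and Yun, to whom the paper defers for this theorem (it gives no proof of its own, only the citation). Your preliminary steps are correct: $M_{0,1}$ is stationary because $\rho$, $u$ and the total energy are conserved; $\int(\widetilde{G}[f]-f)\ln M_{0,1}\,dv\,dI=0$ because $\ln M_{0,1}$ weights $\tfrac12|v-u|^2$ and $I^{2/\delta}$ with the \emph{same} coefficient $1/T_{equ}$, so only total energy conservation is needed; and your two moment identities for $\int(\widetilde{G}[f]-f)|v-u|^2$ and $\int(\widetilde{G}[f]-f)I^{2/\delta}$ are right.

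The gap is in the closure of the differential inequality, which is the only nontrivial step. What actually produces the factor $\theta$ is the interpolation inequality
$$\int\widetilde{G}[f]\ln\widetilde{G}[f]\,dv\,dI \;\leq\; (1-\theta)\int f\ln f\,dv\,dI + \theta\int M_{0,1}\ln M_{0,1}\,dv\,dI,$$
which follows from the Gibbs inequality $\int M_f\ln M_f\leq\int f\ln f$ (with $M_f$ the two-temperature Maxwellian sharing $\rho,u,T_{tr},T_{int}$ with $f$) together with concavity of $\ln$ applied to the convex combinations $T=(1-\theta)T_{tr}+\theta T_{equ}$ and $T_{rel}=(1-\theta)T_{int}+\theta T_{equ}$. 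Combined with the convexity bound $\int(\widetilde{G}[f]-f)\ln f\leq\int\widetilde{G}[f]\ln\widetilde{G}[f]-\int f\ln f$ and the identity $\int f\ln M_{0,1}=\int M_{0,1}\ln M_{0,1}$, it yields $\frac{d}{dt}H(f|M_{0,1})\leq-\theta A_\nu H(f|M_{0,1})$ in three lines, and hence the precise prefactor $\sqrt{2H(f_0|M_{0,1})}$. Your proposed substitute --- splitting $H(f|M_{0,1})=H(f|M_f)+H(M_f|M_{0,1})$ and arguing that each piece decays at rate at least $\theta A_\nu$ --- does not close. First, separate decay estimates with separate constants cannot reproduce the prefactor $\sqrt{2H(f_0|M_{0,1})}$; that constant requires a single Gronwall inequality for the full relative entropy. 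Second, the control of the kinetic piece is not actually established: the dissipation you extract is $-H(f|\widetilde{G}[f])=-H(f|M_f)-H(M_f|\widetilde{G}[f])$, and the leftover macroscopic terms ($-H(M_f|\widetilde{G}[f])$ plus your explicit moment terms) must be shown to dominate $-\theta H(M_f|M_{0,1})$; this is a genuine inequality among the temperatures that your sketch asserts ("the weight $(1-\theta)$ is exactly what is needed") but never proves, and the natural way to prove it is precisely the log-concavity lemma above. So the missing ingredient is identifiable and standard, but as written the argument is not complete.
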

We observe that the property that the speed of relaxation of the temperatures $T_{tr}$ and $T_{int}$ relax to the same temperature $T_{equ}$ is slower than the speed of relaxation to Maxwell distributions, is reflected in the convergence rate to equilibrium. The rate of convergence depends on the factor $\theta A_{\nu}$ with $0<\theta \leq 1.$
\subsection{The BGK model of Klingenberg, Pirner and Puppo for one species of polyatomic molecules}
\label{2.2}
In this section we will present the model from the literature for one species described in \cite{Pirner5}. For the convenience of the reader, we briefly repeat it here. For more details and motivation for the choice of the model see \cite{Pirner5}. 

 Let $x\in \mathbb{R}^d$ and $v\in \mathbb{R}^d, d \in \mathbb{N}$ be the phase space variables  and $t\geq 0$ the time. Let $l$ be the number of internal degrees of freedom. Further, $\eta \in \mathbb{R}^{l}$  is the variable for the internal energy degrees of freedom.
  Then the kinetic model has one distribution function $f(x,v, \eta,t) <0$.
 Furthermore, for any $f: \Lambda_{poly} \times \mathbb{R}^d \times \mathbb{R}^l \times \mathbb{R}^+_0, \Lambda_{poly} \subset \mathbb{R}^d$ with $(1+|v|^2 + |\eta|^2)f \in L^1(dv d\eta)$, $f \geq 0,$  we relate the distribution functions to  macroscopic quantities by mean-values of $f$ as follows
\begin{align}
\int f(v, \eta) \begin{pmatrix}
1 \\ v \\ \eta \\ m |v-u|^2 \\ m |\eta - \bar{\eta} |^2 \\ m (v-u(x,t)) \otimes (v-u(x,t))
\end{pmatrix} 
dv d\eta=: \begin{pmatrix}
n \\ n u \\ n \bar{\eta} \\ d n T^{t} \\ l n T^{r} \\ \mathbb{P}
\end{pmatrix} , 
\label{momentsone}
\end{align} 
 where $n$ is the number density, $u$ the mean velocity, $\bar{\eta}$ the mean variable related to the internal energy, $T^{t}$ the mean temperature of the translation, $T^{r}$ the mean temperature of the internal energy degrees of freedom for example rotation or vibration and $\mathbb{P}$ the pressure tensor. Note that in this paper we shall write $T^{t}$ and $T^{r}$ instead of $k_B T^{t}$ and $k_B T^{r}$, where $k_B$ is Boltzmann's constant. 
In the following, we always keep the term $\bar{\eta}$ in order to cover the most general case, but in  \cite{Pirner5}, they require $\bar{\eta}=0$, which means that the energy in rotations clockwise is the same as in rotations counter clockwise. Similar for vibrations, whereas in \cite{Pirner8}, it is shown that if one requires $\bar{\eta} = \omega$ with a fixed $\omega \in \mathbb{R}^l$ such that $|\omega|^2 = 2 \frac{p_{\infty}}{m n}$, this leads to a more general equation of state in equilibrium given by $p= n T + const.$

We consider the model presented in \cite{Pirner5}  given by

\begin{align} \begin{split} \label{BGKone}
\partial_t f + v\cdot\nabla_x   f   &= \nu n (M - f) 
\end{split}
\end{align}
with the Maxwell distribution
\begin{align} 
\begin{split}
M(x,v,\eta,t) &= \frac{n}{\sqrt{2 \pi \frac{\Lambda}{m}}^d } \frac{1}{\sqrt{2 \pi \frac{\Theta}{m}}^{l}} \exp({- \frac{|v-u|^2}{2 \frac{\Lambda}{m}}}- \frac{|\eta- \bar{\eta}|^2}{2 \frac{\Theta}{m}}), 
\end{split}
\label{BGKmixone}
\end{align}
where $\nu n$ is the collision frequency. 
We couple this kinetic equation with an algebraic equation for conservation of internal energy 
\begin{align}
\frac{d}{2} n \Lambda = \frac{d}{2} n T^{t} +\frac{l}{2} n T^{r} - \frac{l}{2} n \Theta_k,  \label{internalone}
\end{align}  
and a relaxation equation ensuring that the two temperatures $\Lambda$ and $\Theta$ relax to the same value in equilibrium
 \begin{align}
 \begin{split}
 \partial_t M + v \cdot \nabla_x M = \frac{\nu n}{Z_r} \frac{d+l}{d} (\widetilde{M} - M)&+ \nu n (M -f), \\ 
 \Theta_k(0)= \Theta_k^0
 \end{split} 
 \label{kin_Tempone}
 \end{align}
 where $Z_r$ is a given parameter corresponding to the different rates of decays of translational and rotational/vibrational degrees of freedom. 
 Here $M$ is given by
\begin{align} 
M(x,v,\eta,t) = \frac{n}{\sqrt{2 \pi \frac{\Lambda}{m}}^d } \frac{1}{\sqrt{2 \pi \frac{\Theta}{m}}^{l}} \exp({- \frac{|v-u|^2}{2 \frac{\Lambda}{m}}}- \frac{|\eta- \bar{\eta}|^2}{2 \frac{\Theta}{m}}), 
\label{Maxwellianone}
\end{align}
and $\widetilde{M}$ is given by 
\begin{align}
\widetilde{M}= \frac{n}{\sqrt{2 \pi \frac{T}{m}}^{d+l}} \exp \left(- \frac{m |v-u|^2}{2 T}- \frac{m|\eta_{l}- \bar{\eta}|^2}{2 T} \right), 
\label{Max_equone}
\end{align}
where $T$ is the total equilibrium temperature and is given by 
\begin{align}
T:= \frac{d \Lambda+ l \Theta}{d+l}= \frac{d T^{t} + l T^{r}}{d+l}.
\label{equ_tempone}
\end{align}
The second equality follows from \eqref{internalone}. The equation \eqref{kin_Tempone} is used to involve the temperature $\Theta$. If we multiply \eqref{kin_Tempone} by $|\eta|^2$, integrate with respect to $v$ and $\eta$ and use \eqref{equ_tempone}, we obtain  
\begin{align}
\begin{split}
\partial_t(n \Theta) +   \nabla_x\cdot (n \Theta u) = \frac{\nu n}{Z_r} n (\Lambda - \Theta)&+ \nu n n (\Theta - T^{r}) 
\end{split}
\label{relaxone}
\end{align} 
The initial data of $\Lambda$ is determined using \eqref{internalone}. We see that in this model the term $\frac{\nu n}{Z_r} \frac{d+l}{d} (\widetilde{M} - M)$ describes the relaxation of the two temperatures $\Lambda$ and $\Theta$ to a common value. So the effect of the relaxation to common temperatures here is done by coupling the BGK equation with an additional kinetic equation, whereas in the model presented in the previous section, the property was described by a certain convex combination of the temperatures in the Maxwell distribution. \\

If we look at this model, the main differences of the model in section \ref{sec2.1} and the model here  are the following. The model in section \ref{sec2.1} has one scalar variable $I\in \mathbb{R}^+$ for all degrees of freedom in internal energy and the model here has one vector $\eta \in \mathbb{R}^l$ with one component to each degree of freedom in internal energy. If we take a component $k$, $k \in \lbrace 1,...l\rbrace$ of $\eta$, then this component squared represents the internal energy related to the internal degree of freedom $k$.  Moreover, the relaxation of the translational and rotational temperatures to a common value is done in section \ref{sec2.1} by introducing a relaxation temperature $T_{rel}$ and in the model here it is done by the additional relaxation equation \eqref{kin_Tempone}. \\

This model satisfies the following asymptotic behaviour in the space-homogeneous case. Define $\frac{1}{z} = \frac{1}{Z_r} \frac{d+l}{d}$. Then, we have
\begin{theorem}
\label{conv1one}
 Assume that $T^{r}, T^{t}, \Lambda, \Theta$ are the temperatures generated by solutions of \eqref{BGKone} coupled with \eqref{kin_Tempone} and \eqref{internalone}. Assume  that $l \geq 1$ and that
\begin{align*}
T^{r} \geq c \Theta\quad \text{with} \quad c= \frac{1}{l}(d+l)^2 \max \left\lbrace 1, \frac{A(t)}{B(t)} \right\rbrace. 
\end{align*}
Here, $A(T), B(T)$ are some constants  determined later. Moreover, assume $ \frac{1}{z} \geq  \frac{A(T)}{B(T)}$. Then, in the space homogeneous case, we have the following convergence rate of the distribution function $f$:
\begin{align*}
||f - M||_{L^1(dv)}  \leq 4 e^{-\frac{1}{2} Ct} \left(  H(f^0|\widetilde{M}^0)+ 3 z H(M^0|\widetilde{M}^0) \right)^{\frac{1}{2}}.
\end{align*}
where $C$ is given by $$C= \text{min} \left\lbrace  \nu n  ,  \frac{2c}{3 z} \nu n,  \right\rbrace.$$
\end{theorem}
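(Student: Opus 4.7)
The plan is to use the relative entropy method combined with the Csisz\'ar--Kullback inequality, in the same spirit as for the Andries--Le Tallec--Perlat--Perthame model recalled above. In the space-homogeneous setting the conserved quantities $n$, $u$ and the total energy are constant in time, so that the equilibrium temperature $T$ defined in \eqref{equ_tempone} and the global Maxwellian $\widetilde{M}$ are themselves time-independent. It is therefore natural to measure the distance of $f$ from $\widetilde{M}$ and of the intermediate Maxwellian $M$, which still carries two distinct temperatures $\Lambda$ and $\Theta$, from $\widetilde{M}$ through a single Lyapunov functional
\begin{equation*}
E(t) := H(f|\widetilde{M}) + 3z\,H(M|\widetilde{M}),
\end{equation*}
where $H(g|h) = \iint g \log(g/h)\, dv\, d\eta$ and the weight $3z$ is tuned precisely to absorb the indefinite cross terms that will arise below.

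The first step is to differentiate $H(f|\widetilde{M})$ along \eqref{BGKone}, splitting $\log f - \log \widetilde{M} = (\log f - \log M) + (\log M - \log \widetilde{M})$. This produces the usual nonpositive BGK dissipation $-\nu n \iint (f-M)(\log f - \log M)\, dv\, d\eta$ plus an indefinite cross term $\nu n \iint (M - f)(\log M - \log \widetilde{M})\, dv\, d\eta$. Differentiating $H(M|\widetilde{M})$ along \eqref{kin_Tempone} yields a strictly dissipative contribution $-\frac{\nu n}{Z_r}\frac{d+l}{d}\iint (M-\widetilde{M})(\log M - \log \widetilde{M})\, dv\, d\eta$ together with an additional term $\nu n \iint (M-f)(\log M - \log \widetilde{M})\, dv\, d\eta$ coming from the source $\nu n(M - f)$ in \eqref{kin_Tempone}; this term has the opposite sign of the cross term appearing in $\tfrac{d}{dt}H(f|\widetilde{M})$, so that weighting $H(M|\widetilde{M})$ by $3z$ allows both sign-indefinite pieces to be controlled simultaneously by a Young-type inequality.

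Because $M$ and $\widetilde{M}$ are Maxwellians sharing the same $n$ and $u$ but differing in their temperatures $(\Lambda,\Theta)$ versus $(T,T)$, both $H(M|\widetilde{M})$ and the Gaussian dissipation $\iint (M-\widetilde{M})(\log M - \log \widetilde{M})\, dv\, d\eta$ can be computed in closed form as explicit functions of $\Lambda/T$ and $\Theta/T$. The main technical obstacle is to produce a two-sided estimate of the form $B(T)\, H(M|\widetilde{M}) \leq \iint (M-\widetilde{M})(\log M - \log \widetilde{M})\, dv\, d\eta \leq A(T)\, H(M|\widetilde{M})$, which, together with the hypothesis $1/z \geq A(T)/B(T)$, enables one to absorb the cross terms into a small fraction of the two good dissipations. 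The temperature assumption $T^{r} \geq c\Theta$ enters at exactly this point: through the relaxation equation \eqref{relaxone} it keeps $\Theta$ bounded away from $0$, which is what makes $M$ a legitimate Gaussian and prevents the constants $A(T)$ and $B(T)$ from degenerating.

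Combining the two entropy identities then yields a differential inequality $\tfrac{d}{dt} E(t) \leq -C\, E(t)$ with $C = \min\{\nu n,\, 2c\nu n/(3z)\}$, and Gr\"onwall gives $E(t) \leq E(0)\, e^{-Ct}$. The stated $L^{1}$ bound follows from the triangle inequality $\|f - M\|_{L^{1}} \leq \|f - \widetilde{M}\|_{L^{1}} + \|M - \widetilde{M}\|_{L^{1}}$, the Csisz\'ar--Kullback inequality applied to each summand, and the elementary estimate $\sqrt{a}+\sqrt{b} \leq \sqrt{2(a+b)}$, which combine both pieces into a single square root of $H(f^{0}|\widetilde{M}^{0}) + 3z\,H(M^{0}|\widetilde{M}^{0})$; the numerical prefactor $4$ simply absorbs the Csisz\'ar--Kullback constant together with the weight $3z$, while the rate $e^{-Ct/2}$ comes from taking the square root of the entropy decay.
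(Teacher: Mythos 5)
Your overall skeleton --- the Lyapunov functional $H(f|\widetilde{M})+3z\,H(M|\widetilde{M})$, its differentiation along \eqref{BGKone} and \eqref{kin_Tempone}, Gr\"onwall, and Csisz\'ar--Kullback at the end --- is exactly the paper's strategy (the paper proves the two-species version, Theorem \ref{conv1}, and obtains the one-species statement as a special case). The final step via the triangle inequality is fine. The gap is in the middle, where you dispose of the indefinite terms. First, the two cross terms do \emph{not} have opposite signs: the source in \eqref{kin_Tempone} is $+\nu n(M-f)$, the same sign as the relaxation term in \eqref{BGKone}, so both cross terms equal $+\nu n\iint (M-f)(\ln M-\ln\widetilde{M})\,dv\,d\eta$ and they \emph{add} to $(1+3z)\nu n\iint(M-f)(\ln M-\ln\widetilde{M})\,dv\,d\eta$ rather than cancelling. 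There is no sign cancellation to exploit, and the entire difficulty of the proof is to dominate this sum by the relaxation dissipation.

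Second, the estimate you propose as the key technical step --- a two-sided comparison $B\,H(M|\widetilde{M})\le\iint(M-\widetilde{M})(\ln M-\ln\widetilde{M})\le A\,H(M|\widetilde{M})$ --- is not the one that is needed and cannot close the argument by itself: the cross term, computed explicitly, equals $n\bigl[\tfrac{d}{2}\tfrac{T^{t}}{\Lambda}+\tfrac{l}{2}\tfrac{T^{r}}{\Theta}-\tfrac{d+l}{2}\bigr]$ and involves the temperatures of $f$ measured against those of $M$, which no functional of $M$ and $\widetilde{M}$ alone controls. What the paper actually does (Lemmas \ref{lemm1}--\ref{lemm3}, specialized to one species) is compute all three integrals in closed form and prove the single inequality $\nu n\iint\ln M\,(\widetilde{M}-M)\,dv\,d\eta+3z\,\nu n\iint\ln M\,(M-f)\,dv\,d\eta\le 0$, i.e.\ $3z\bigl[\tfrac{d}{2}\tfrac{T^{t}}{\Lambda}+\tfrac{l}{2}\tfrac{T^{r}}{\Theta}-\tfrac{d+l}{2}\bigr]\le\tfrac{d}{2}\tfrac{T}{\Lambda}+\tfrac{l}{2}\tfrac{T}{\Theta}-\tfrac{d+l}{2}$. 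This is precisely where the hypotheses enter: $T^{r}\ge c\Theta$ with $c=\tfrac{1}{l}(d+l)^{2}\max\{1,A(t)/B(t)\}$ makes both sides of this inequality nonnegative and comparable, and $1/z\ge A(T)/B(T)$ guarantees the required smallness of $z$. In particular $T^{r}\ge c\Theta$ is not there to keep $\Theta$ away from zero --- the lower bounds on all temperatures already come from the existence theorem (Theorem \ref{ex2}), whose constants $A(t),B(t)$ are a priori bounds on the macroscopic quantities, not constants in a dissipation--entropy comparison. Without this explicit moment computation your ``Young-type inequality'' has nothing quantitative to absorb the cross term into, so the proof as sketched does not close.
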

We do not show the proof here, because later in section \ref{sec2.2}, we will prove it for a  model for gas mixtures which contains this model here as a special case. \\

We see that the qualitative behaviour of the speed of convergence is different for this model than for the model presented in subsection \ref{sec2.1}, since now, for one species, the constant in the exponential function depends on the minimum of the two different speed of relaxations $\nu_{11} n_1$ and $\frac{ \nu_{11} n_1}{ z_1}$, so this means, if the speed of relaxation of the two temperatures $\frac{\nu_{11} n_1}{z_1}$  is slower than the speed of relaxation to a Maxwell distribution $\nu_{11} n_1$, the rate of convergence to equilibrium is related to the parameter $z_1$, whereas, if the speed of relaxation of the two temperatures $\frac{\nu_{11} n_1}{z_1}$ times  is faster, the parameter $z_k$ has no influence and the speed of convergence is determined by the speed of relaxation to Maxwell distribution $\nu_{11} n_1$. So we see that this model seems to choose a different rate of convergence dependent on the fact which speed of relaxation is slower. 

However, for the entropy dissipation estimates, we have to assume that $z_k$ is small enough. So this model covers in principal only the regime where the speed of relaxation of the temperatures is faster than the speed of relaxation to Maxwell distributions. So it covers the opposite regime than the model described in subsection \ref{sec2.1}. 
\subsection{The BGK model of Bernard, Iollo and Puppo for one species of polyatomic molecules}
\label{2.3}
In this section we will present the model from the literature for one species described in \cite{Bernard}. For the convenience of the reader, we briefly repeat it here. For more details and motivation for the choice of the model see \cite{Bernard}. 

 Let $x\in \mathbb{R}^d$ and $v\in \mathbb{R}^d, d \in \mathbb{N}$ be the phase space variables  and $t\geq 0$ the time. Let $l$ be the number of internal degrees of freedom. Further, $\eta \in \mathbb{R}^{l}$  is the variable for the internal energy degrees of freedom.
  Then the kinetic model has one distribution function $f(x,v, \eta,t) <0$.
 Furthermore, for any $f: \Lambda_{poly} \times \mathbb{R}^d \times \mathbb{R}^l \times \mathbb{R}^+_0, \Lambda_{poly} \subset \mathbb{R}^d$ with $(1+|v|^2 + |\eta|^2)f \in L^1(dv d\eta)$, $f \geq 0,$  we relate the distribution functions to  macroscopic quantities by mean-values of $f$ as it is done in \eqref{moments}.

Again, we always keep the term $\bar{\eta}$ in order to cover the most general case, but also in  \cite{Bernard}, they require $\bar{\eta}=0$.

We consider the model presented in \cite{Bernard}  given by \eqref{BGKone} with the Maxwell distribution \eqref{BGKmixone}.

Again, we couple this kinetic equation with an algebraic equation for conservation of internal energy \eqref{internalone} and a relaxation equation ensuring that the two temperatures $\Lambda$ and $\Theta$ relax to the same value in equilibrium
 \begin{align}
 \begin{split}
 \partial_t M + v \cdot \nabla_x M &= \frac{\nu n}{Z_r} \frac{d+l}{d} (\widetilde{M} - M), \\
 \Theta_k(0)&= \Theta_k^0
 \end{split} 
 \label{kin_Temp2one}
 \end{align}
 Here $M$ is given by \eqref{Maxwellianone} 
and $\widetilde{M}$ is given by \eqref{Max_equone}
where $T$ is the total equilibrium temperature and is given by \eqref{equ_tempone}.
So all in all, the model described in subsection \ref{2.2} and \ref{2.3} just differ in the evolution of $\Theta$ determined by the equation \eqref{kin_Tempone} and \eqref{kin_Temp2one}, respectively. In \eqref{kin_Temp2one}, the term $\nu n (M-f)$ is missing. This term was introduced by extending the model in section \ref{2.2} to gas mixtures. This can be seen later in section \ref{sec2.2}. \\

This model satisfies the following asymptotic behaviour in the space-homogeneous case.
\begin{theorem}
\label{conv3}
 Assume that $(f, M)$ is a solution of \eqref{BGKone} coupled with \eqref{kin_Temp2one} and \eqref{internalone}. Then, in the space homogeneous case, we have the following convergence rate of the distribution functions $f$:
 {\small
\begin{align*}
||f - \widetilde{M}||_{L^1(dv d\eta)}  \leq 4 e^{-\frac{1}{4} \widetilde{C} t} \left( H(f^0|\widetilde{M}^0)+ 2 \max \lbrace 1, z\rbrace H(M^0|\widetilde{M}^0) \right)^{\frac{1}{2}}.
\end{align*}}
where $\widetilde{C}$ is given by $$\widetilde{C}= \min \left\lbrace  \nu n  ,  \frac{\nu n}{z}  \right\rbrace.$$
\end{theorem}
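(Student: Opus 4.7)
The plan is to adapt the entropy-dissipation scheme to be established for Theorem~\ref{conv1one}, exploiting the feature that in the Bernard-Iollo-Puppo model the Maxwellian $M$ evolves autonomously. In the space-homogeneous setting, conservation of $n$, $u$ and $\bar{\eta}$ for~\eqref{BGKone}, together with the second-moment equations $\partial_t \Lambda = (\nu n/z)(T-\Lambda)$ and $\partial_t\Theta = (\nu n/z)(T-\Theta)$ derived from~\eqref{kin_Temp2one}, imply $\partial_t T=0$ after using the algebraic constraint~\eqref{internalone}, so that $\widetilde{M}$ is time-independent. This is the key simplification compared with the model of section~\ref{2.2}.

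First I would prove exponential decay of $H(M|\widetilde{M})$. Differentiating in time, using $\partial_t\widetilde{M}=0$ and $\int\partial_t M\,dv d\eta=0$, gives
\[
\frac{d}{dt}H(M|\widetilde{M}) = \frac{\nu n}{z}\int(\widetilde{M}-M)\log(M/\widetilde{M})\,dv d\eta.
\]
Since $M$ and $\widetilde{M}$ share the same total mass $n$, the identity $\int(a-b)\log(a/b)\,dv d\eta = H(a|b)+H(b|a)$ together with nonnegativity of $H(\widetilde{M}|M)$ yields $\frac{d}{dt}H(M|\widetilde{M})\leq -(\nu n/z)H(M|\widetilde{M})$, hence $H(M|\widetilde{M})(t)\leq e^{-(\nu n/z)t}H(M^0|\widetilde{M}^0)$ by Gronwall.

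Next I would derive a companion estimate for $H(f|M)$. Writing $\frac{d}{dt}H(f|M) = -\nu n\int(f-M)\log(f/M)\,dv d\eta - \int f\,\partial_t\log M\,dv d\eta$ (using~\eqref{BGKone} and $\int\partial_t f=0$), the first term is bounded above by $-\nu n\,H(f|M)$ by the same identity as above. The drift term $-\int f\,\partial_t\log M\,dv d\eta$ is the principal obstacle: expanding $\log M$ explicitly and substituting the ODEs for $\Lambda,\Theta$ rewrites it as a combination of second moments of $f$ weighted by $(T-\Lambda)$ and $(T-\Theta)$, each multiplied by the rate $\nu n/z$. I would control these temperature gaps by $\sqrt{H(M|\widetilde{M})}$ via a Csisz\'ar-Kullback-type estimate between Gaussians sharing mass and mean, and the complementary $f$-dependent factors by a constant multiple of $\sqrt{H(f|M)}$, then apply Young's inequality to split the cross term as $\varepsilon\,\nu n\,H(f|M)+C_\varepsilon\,(\nu n/z)\,H(M|\widetilde{M})$.

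With the Lyapunov functional $\Psi(t)=H(f|M)+2\max\{1,z\}\,H(M|\widetilde{M})$ and $\varepsilon$ small enough to absorb the cross term, the two differential inequalities should combine into $\frac{d\Psi}{dt}\leq -(\widetilde{C}/2)\,\Psi$, whence $\Psi(t)\leq e^{-\widetilde{C}t/2}\Psi(0)$. The triangle inequality $\|f-\widetilde{M}\|_{L^1}\leq\|f-M\|_{L^1}+\|M-\widetilde{M}\|_{L^1}$, two applications of Csisz\'ar-Kullback-Pinsker (valid since all three densities have mass $n$), and $\sqrt{a}+\sqrt{b}\leq\sqrt{2(a+b)}$ then yield the stated bound, with the numerical constant $4$ absorbing the $L^1$-to-entropy conversion factors. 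The hardest step is the Young estimate on the drift term: producing precisely the rate $\widetilde{C}=\min\{\nu n,\nu n/z\}$ and the weight $2\max\{1,z\}$ requires carefully balancing the two intrinsic rates $\nu n$ and $\nu n/z$ against each other so that neither the regime $z<1$ nor $z>1$ is disadvantaged in the resulting Gronwall inequality.
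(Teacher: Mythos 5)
Your overall architecture --- a Lyapunov functional combining an $f$-entropy with a weighted $M$-entropy, two differential inequalities, Gronwall, and Csisz\'ar--Kullback at the end --- matches the paper's (which proves this theorem as the one-species specialization of the mixture result in Section 4). Your first step, $\frac{d}{dt}H(M|\widetilde{M})\le -\frac{\nu n}{z}H(M|\widetilde{M})$, is correct and is exactly the paper's inequality \eqref{add2} restricted to one species, and your observation that $T$ and hence $\widetilde{M}$ are time-independent is also right. The genuine divergence, and the gap, is in the $f$-component. The paper never forms $H(f|M)$: it uses $H(f|\widetilde{M})$ throughout. Since $\ln\widetilde{M}$ is a linear combination of $1$, $v$, $\eta$ and $|v|^2+|\eta|^2$, and $f$, $M$, $\widetilde{M}$ share density, mean velocity, mean $\eta$ and total internal energy (by \eqref{internalone}), both $\int \partial_t f\,\ln\widetilde{M}\,dv\,d\eta$ and $\int f\,\partial_t\widetilde{M}/\widetilde{M}\,dv\,d\eta$ vanish identically, so
$\frac{d}{dt}H(f|\widetilde{M})=\nu n\int(M-f)\ln f\,dv\,d\eta\le \nu n\,(H(M)-H(f))\le -\nu n\,H(f|\widetilde{M})$
by convexity of $x\ln x - x$ and Lemma \ref{Mtilde} ($H(\widetilde{M})\le H(M)$). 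There is no drift term and nothing to absorb; the two inequalities are simply added with weight $2\max\{1,z\}$.

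Your route through $H(f|M)$ creates the drift term $-\int f\,\partial_t\ln M\,dv\,d\eta$, which works out to a positive constant times
$\frac{\nu n}{z}\,(\Theta-\Lambda)(T^{t}-\Lambda)\left(\frac{l}{\Lambda^2}+\frac{d}{\Theta^2}\right)$
after eliminating $T^{r}-\Theta$ via \eqref{internalone}, and this is where the proposal breaks down for two reasons. First, $T^{t}-\Lambda=\frac{1}{dn}\int m|v-u|^2(f-M)\,dv\,d\eta$ is a second-moment difference; Csisz\'ar--Kullback controls $\|f-M\|_{L^1}$ by $\sqrt{H(f|M)}$ but not this weighted moment without additional uniform bounds on higher moments of $f$, which the theorem does not assume. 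Second, and more fatally, the prefactor $\nu n/z$ survives Young's inequality: $\frac{\nu n}{z}ab\le\varepsilon\nu n\,a^2+\frac{\nu n}{4\varepsilon z^2}b^2$ leaves a contribution of order $\frac{\nu n}{\varepsilon z^{2}}H(M|\widetilde{M})$, whereas the decay available from your first step, even after multiplying by the weight $2\max\{1,z\}$, is only of order $\frac{\nu n}{z}H(M|\widetilde{M})$. For small $z$ these cannot be balanced while keeping $\varepsilon$ small enough to retain the rate $\nu n$ on $H(f|M)$; you would be forced to reintroduce a lower bound on $z$, which is precisely the restriction that distinguishes this theorem from Theorem \ref{conv1one} and that it is designed to remove. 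The fix is not a more careful Young estimate but a change of reference measure: replace $H(f|M)$ by $H(f|\widetilde{M})$ and the cross term disappears exactly.
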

We do not show the proof here, because later in section \ref{sec2}, we will prove it for a  model for gas mixtures which contains this model here as a special case. 
We observe that the qualitative behaviour of the convergence to equilibrium is the same as in the model presented in section \ref{2.2}, but we do not have to assume that $z_k$ has to be small. So this model for one species covers both regimes, the regime where the speed of relaxation to a Maxwell distribution is fast that the speed of relaxation of the two temperatures to an equal value, and the other way round. And in addition, both regimes are reflected in the convergence rate to equilibrium.
\section{Entropy dissipation estimates for a  BGK model for gas mixtures of polyatomic molecules from the literature}
\label{sec2.2}
In this section we will present a model from the literature for gas mixtures of polyatomic molecules. It is an extension of the model described in section \ref{2.2} and is presented in \cite{Pirner5}. For the convenience of the reader, we briefly repeat it here. For more details and motivation for the choice of the model see \cite{Pirner5}. For simplicity, it is presented in the case of two species. 

 Let $x\in \mathbb{R}^d$ and $v\in \mathbb{R}^d, d \in \mathbb{N}$ be the phase space variables  and $t\geq 0$ the time. Let $M$ be the total number of different rotational and vibrational degrees of freedom and $l_k$ the number of internal degrees of freedom of species $k$, $k=1,2$. Note that the sum $l_1+l_2$ is not necessarily equal to $M$, because $M$ counts only the different degrees of freedom in the internal energy, $l_1+l_2$ counts all degrees of freedom in the internal energy. For example, consider two species consisting of diatomic molecules which have two rotational  degrees of freedom. In addition, the second species has one vibrational degree of freedom. Then we have $M=3, l_1=2, l_2=3$. Further, $\eta \in \mathbb{R}^{M}$  is the variable for the internal energy degrees of freedom, $\eta_{l_k} \in \mathbb{R}^{M}$ coincides with $\eta$ in the components corresponding to the internal degrees of freedom of species $k$ and is zero in the other components.  \\ Since we want to describe two different species, our kinetic model has two distribution functions $f_1(x,v,\eta_{l_1},t)> 0$ and $f_2(x,v,\eta_{l_2},t) > 0$. 
 Furthermore, for any $f_1,f_2: \Lambda_{poly} \times \mathbb{R}^d \times \mathbb{R}^M \times \mathbb{R}^+_0, \Lambda_{poly} \subset \mathbb{R}^d$ with $(1+|v|^2 + |\eta_{l_k}|^2)f_k \in L^1(dv d\eta_{l_k})$, $f_1,f_2 \geq 0,$  we relate the distribution functions to  macroscopic quantities by mean-values of $f_k$, $k=1,2$ as follows
\begin{align}
\int f_k(v, \eta_{l_k}) \begin{pmatrix}
1 \\ v \\ \eta_{l_k} \\ m_k |v-u_k|^2 \\ m_k |\eta_{l_k} - \bar{\eta}_{l_k} |^2 \\ m_k (v-u_k(x,t)) \otimes (v-u_k(x,t))
\end{pmatrix} 
dv d\eta_{l_k}=: \begin{pmatrix}
n_k \\ n_k u_k \\ n_k \bar{\eta}_{l_k} \\ d n_k T_k^{t} \\ l_k n_k T_k^{r} \\ \mathbb{P}_k
\end{pmatrix} , 
\label{moments}
\end{align} 
for $k=1,2$, where $n_k$ is the number density, $u_k$ the mean velocity, $\bar{\eta}_{l_k}$ the mean variable related to the internal energy, $T_k^{t}$ the mean temperature of the translation, $T_k^{r}$ the mean temperature of the internal energy degrees of freedom for example rotation or vibration and $\mathbb{P}_k$ the pressure tensor of species $k$, $k=1,2$. Note that in this paper we shall write $T_k^{t}$ and $T_k^{r}$ instead of $k_B T_k^{t}$ and $k_B T_k^{r}$, where $k_B$ is Boltzmann's constant. 
In the following, we always keep the term $\bar{\eta}_{l_k}$ in order to cover the most general case. 

We consider the model presented in \cite{Pirner5}  given by

\begin{align} \begin{split} \label{BGK}
\partial_t f_1 + v\cdot\nabla_x   f_1   &= \nu_{11} n_1 (M_1 - f_1) + \nu_{12} n_2 (M_{12}- f_1),
\\ 
\partial_t f_2 + v\cdot\nabla_x   f_2 &=\nu_{22} n_2 (M_2 - f_2) + \nu_{21} n_1 (M_{21}- f_2), \\
f_1(t=0) &= f_1^0, \\
f_2(t=0) &= f_2^0
\end{split}
\end{align}
with the Maxwell distributions
\begin{align} 
\begin{split}
M_k(x,v,\eta_{l_k},t) &= \frac{n_k}{\sqrt{2 \pi \frac{\Lambda_k}{m_k}}^d } \frac{1}{\sqrt{2 \pi \frac{\Theta_k}{m_k}}^{l_k}} \exp({- \frac{|v-u_k|^2}{2 \frac{\Lambda_k}{m_k}}}- \frac{|\eta_{l_k}- \bar{\eta}_{l_k}|^2}{2 \frac{\Theta_k}{m_k}}), 
\\
M_{kj}(x,v,\eta_{l_k},t) &= \frac{n_{kj}}{\sqrt{2 \pi \frac{\Lambda_{kj}}{m_k}}^d } \frac{1}{\sqrt{2 \pi \frac{\Theta_{kj}}{m_k}}^{l_k}} \exp({- \frac{|v-u_{kj}|^2}{2 \frac{\Lambda_{kj}}{m_k}}}- \frac{|\eta_{l_k}- \bar{\eta}_{1_k,kj}|^2}{2 \frac{\Theta_{kj}}{m_k}}), 
\end{split}
\label{BGKmix}
\end{align}
for $ j,k =1,2, j \neq k$, 
where $\nu_{11} n_1$ and $\nu_{22} n_2$ are the collision frequencies of the particles of each species with itself, while $\nu_{12} n_2$ and $\nu_{21} n_1$ are related to interspecies collisions. 
To be flexible in choosing the relationship between the collision frequencies, we now assume the relationship
\begin{equation} 
\nu_{12}=\varepsilon \nu_{21}, \quad 0 < \frac{l_1}{l_1+l_2}\varepsilon \leq 1.
\label{coll}
\end{equation}
The restriction $\frac{l_1}{l_1+l_2} \varepsilon \leq 1$ is without loss of generality.
If $\frac{l_1}{l_1+l_2}\varepsilon >1$, exchange the notation $1$ and $2$ and choose $\frac{1}{\varepsilon}.$ In addition, we assume that all collision frequencies are positive. 
For the existence and uniqueness proof we assume the following restrictions on our collision frequencies
\begin{align}
\nu_{jk}(x,t) n_k(x,t) = \widetilde{\nu}_{jk} \frac{n_k(x,t)}{n_1(x,t) + n_2(x,t)}, ~ j,k =1,2
\label{asscoll}
\end{align}
with constants $\widetilde{\nu}_{11}, \widetilde{\nu}_{12}, \widetilde{\nu}_{21}, \widetilde{\nu}_{22}$.
We couple these kinetic equations with an algebraic equation for conservation of internal energy 
\begin{align}
\frac{d}{2} n_k \Lambda_k = \frac{d}{2} n_k T_k^{t} +\frac{l_k}{2} n_k T_k^{r} - \frac{l_k}{2} n_k \Theta_k, \quad k=1,2. \label{internal}
\end{align}  
and a relaxation equation ensuring that the two temperatures $\Lambda_k$ and $\Theta_k$ relax to the same value in equilibrium
 \begin{align}
 \begin{split}
 \partial_t M_k + v \cdot \nabla_x M_k = \frac{\nu_{kk} n_k}{Z_r^k} \frac{d+l_k}{d} (\widetilde{M}_k - M_k)&+ \nu_{kk} n_k (M_k -f_k) \\&+ \nu_{kj} n_j (M_{kj} - f_k) , \\
 \Theta_k(0)= \Theta_k^0
 \end{split} 
 \label{kin_Temp}
 \end{align}
for $j,k=1,2, j \neq k$, where $Z_r^k$ are given parameters corresponding to the different rates of decays of translational and rotational/vibrational degrees of freedom. 
 Here $M_k$ is given by
\begin{align} 
M_k(x,v,\eta_{l_k},t) = \frac{n_k}{\sqrt{2 \pi \frac{\Lambda_k}{m_k}}^d } \frac{1}{\sqrt{2 \pi \frac{\Theta_k}{m_k}}^{l_k}} \exp({- \frac{|v-u_k|^2}{2 \frac{\Lambda_k}{m_k}}}- \frac{|\eta_{l_k}- \bar{\eta}_{l_k}|^2}{2 \frac{\Theta_k}{m_k}}), \quad k=1,2,
\label{Maxwellian}
\end{align}
and $\widetilde{M}_k$ is given by 
\begin{align}
\widetilde{M}_k= \frac{n_k}{\sqrt{2 \pi \frac{T_k}{m_k}}^{d+l_k}} \exp \left(- \frac{m_k |v-u_k|^2}{2 T_k}- \frac{m_k|\eta_{l_k}- \bar{\eta}_{l_k,kj}|^2}{2 T_k} \right), \quad k=1,2,
\label{Max_equ}
\end{align}
where $T_k$ is the total equilibrium temperature and is given by 
\begin{align}
T_k:= \frac{d \Lambda_k + l_k \Theta_k}{d+l_k}= \frac{d T^{t}_k + l_k T^{r}_k}{d+l_k}.
\label{equ_temp}
\end{align}
The second equality follows from \eqref{internal}. The equation \eqref{kin_Temp} is used to involve the temperature $\Theta_k$. If we multiply \eqref{kin_Temp} by $|\eta_{l_k}|^2$, integrate with respect to $v$ and $\eta_{l_k}$ and use \eqref{equ_temp}, we obtain  
\begin{align}
\begin{split}
\partial_t(n_k \Theta_k) +   \nabla_x\cdot (n_k \Theta_k u_k) = \frac{\nu_{kk} n_k}{Z_r^k} n_k (\Lambda_k - \Theta_k)&+ \nu_{kk} n_k n_k (\Theta_k - T_k^{r}) \\ &+ \nu_{kj} n_j n_k(\Theta_{kj} - T_k^{r}).
\end{split}
\label{relax}
\end{align} for $k=1,2$. The initial data of $\Lambda_k$ is determined using \eqref{internal}. We see that in this model the term $\frac{\nu_{kk} n_k}{Z_r^k} \frac{d+l_k}{d} (\widetilde{M}_k - M_k)$ describes the relaxation of the two temperatures $\Lambda_k$ and $\Theta_k$ to a common value. So again,  the effect of the relaxation to common temperatures here is done by coupling the BGK equation with an additional kinetic equation. 

In addition,  \eqref{BGK} and \eqref{kin_Temp} are consistent. If we multiply the equations for species $k$ of \eqref{BGK} and \eqref{kin_Temp}  by $v$ and integrate with respect to $v$ and $\eta_{l_k}$, we get in both cases for the right-hand side 
$$ \nu_{kj} n_j n_k  (u_{jk} - u_k),$$ and if we compute the total internal energy of both equations, we obtain in both cases $$ \frac{1}{2} \nu_{kj} n_k n_j [d \Lambda_{jk} + l_j \Theta_{jk} - ( d \Lambda_j + l_j \Theta_j)].$$
This is shown  in section 3.1 in theorem 3.2 in \cite{Pirner5}. This was the reason for adding the additional term $\nu_{kj} n_j (M_{kj} - f_k)$ in \eqref{kin_Temp}. Then, in order to treat the two types of interactions, interaction of a species with itself and interactions with the other species, the term $\nu_{kk} n_k (M_k-f_k)$ was added, too, even in the one species model. This is the reason, why the model in section \ref{2.2} differs from the model in \ref{2.3}.

In the following we will recall some properties of this model which we will need later for the proof of the convergence rate to equilibrium.

\subsection{Conservation properties}
The Maxwell distributions $M_1$ and $M_2$ in \eqref{BGKmix} have the same densities, mean velocities and internal energies as $f_1$ and $f_2$, respectively. With this choice, we guarantee the conservation of the number of particles, momentum and internal energy in interactions of one species with itself (see section 3.2 in \cite{Pirner5}). The remaining parameters $n_{12}, n_{21}, u_{12}, u_{21}, \Lambda_{12}, \Lambda_{21}, \Theta_{12}$ and $\Theta_{21}$ will be determined using conservation of the number of particles, of total momentum and total energy, together with some symmetry considerations. 

If we assume that \begin{align} n_{12}=n_1 \quad \text{and} \quad n_{21}=n_2,  
\label{density2} 
\end{align}
we have conservation of the number of particles, see theorem 2.1 in \cite{Pirner}.
If we further assume 
 \begin{align}
u_{12}&= \delta u_1 + (1- \delta) u_2, \quad \delta \in \mathbb{R},
\label{convexvel2}
\end{align} then we have conservation of total momentum
provided that
\begin{align}
u_{21}=u_2 - \frac{m_1}{m_2} \varepsilon (1- \delta ) (u_2 - u_1), 
\label{veloc2}
\end{align}
see theorem 2.2 in \cite{Pirner}. 
We also need corresponding definitions for $\bar{\eta}_{l_1,12}$ and $\bar{\eta}_{l_2,21}$. This is done in the next definition.
\begin{defi}
\label{defeta}
We consider
\begin{align*}
\bar{\eta}_{12} &= \beta \bar{\eta}_{l_1} + (1- \beta) \bar{\eta}_{l_2}, \quad \beta \in \mathbb{R},
\end{align*}
and define $\bar{\eta}_{l_1,12}$ as the vector which is equal to $\bar{\eta}_{12}$ in the components where $\eta_{l_1}$ coincides with $\eta$ and zero otherwise. In addition, consider
\begin{align*}
\bar{\eta}_{21}=\bar{\eta}_{l_2} - \frac{m_1}{m_2} \varepsilon (1- \beta ) (\bar{\eta}_{l_2} - \bar{\eta}_{l_1})
\end{align*}
and define $\bar{\eta}_{l_2,21}$ as the vector which is equal to $\bar{\eta}_{21}$ in the components where $\eta_{l_2}$ coincides with $\eta$ and zero otherwise.
\end{defi}
Similar as in the case of the mean velocities one can prove that this definition leads to conservation of momentum.
If we further assume that $\Lambda_{12}$ and $\Theta_{12}$ are of the following form
\begin{align}
\begin{split}
\Lambda_{12} &=  \alpha \Lambda_1 + ( 1 - \alpha) \Lambda_2 + \gamma |u_1 - u_2 | ^2,  \quad 0 \leq \alpha \leq 1, \gamma \geq 0 ,\\
\Theta_{12} &= \frac{l_1 \Theta_1 + l_2 \Theta_2}{l_1 +l_2} + \tilde{\gamma} | \bar{\eta}_{l_1} - \bar{\eta}_{l_2}|^2, \quad \quad \quad \quad \quad \quad \quad ~ \tilde{\gamma} \geq 0,
\label{contemp2}
\end{split}
\end{align}
then we have conservation of total energy and a uniform choice of the temperatures 
provided that
\begin{align}
\begin{split}
\Lambda_{21} &=\left[ \frac{1}{d} \varepsilon m_1 (1- \delta) \left( \frac{m_1}{m_2} \varepsilon ( \delta - 1) + \delta +1 \right) - \varepsilon \gamma \right] |u_1 - u_2|^2 \\&+ \varepsilon ( 1 - \alpha ) \Lambda_1 + ( 1- \varepsilon ( 1 - \alpha)) \Lambda_2, \\
\Theta_{21}&= \varepsilon \frac{l_1}{l_1 +l_2} \Theta_1 + \left( 1- \varepsilon \frac{l_1}{l_1+l_2} \right) \Theta_2 - \frac{l_1}{l_2} \varepsilon \tilde{\gamma} |\bar{\eta}_{l_1} - \bar{\eta}_{l_2}|^2 \\&- \varepsilon \frac{m_1}{l_2} (|\bar{\eta}_{l_1,12}|^2 - |\bar{\eta}_{l_1}|^2) - \frac{m_2}{l_2} (|\bar{\eta}_{l_2,21}|^2 -|\bar{\eta}_{l_2}|^2)
\label{temp2}
\end{split}
\end{align}
see theorem 3.2 and remark 3.2 in \cite{Pirner5}.
In order to ensure the positivity of all temperatures, we need to restrict $\delta$, $\beta$, $\gamma$ and $\tilde{\gamma}$ to 
 \begin{align}
 \begin{split}
0 \leq \gamma  \leq \frac{m_1}{d} (1-\delta) \left[(1 + \frac{m_1}{m_2} \varepsilon ) \delta + 1 - \frac{m_1}{m_2} \varepsilon \right], \\
0 \leq \tilde{\gamma}  \leq \frac{m_1}{l_1} (1-\beta) \left[(1 + \frac{m_1}{m_2} \varepsilon ) \beta + 1 - \frac{m_1}{m_2} \varepsilon \right], 
\end{split}
 \label{gamma2}
 \end{align}
and
\begin{align}
\begin{split}
 \frac{ \frac{m_1}{m_2}\varepsilon - 1}{1+\frac{m_1}{m_2}\varepsilon} \leq  \delta \leq 1, \quad
  \frac{ \frac{m_1}{m_2}\varepsilon - 1}{1+\frac{m_1}{m_2}\varepsilon} \leq  \beta \leq 1,
  \end{split}
\label{gammapos2}
\end{align}
see theorem 2.5 in \cite{Pirner} for $N=3$ in the mono-atomic case. 


\subsection{Existence and uniqueness of mild solutions}
The existence and uniqueness of mild solutions of the model presented here, was established in \cite{Pirner8}. 
\begin{theorem}
Under certain assumption on the initial data, the domain in space and the collision frequencies (see \cite{Pirner8} for the details), the definitions \eqref{moments}, \eqref{density2}, \eqref{convexvel2}, \eqref{veloc2},  \eqref{contemp2} and \eqref{temp2}, there exists a unique non-negative mild solution \\ $(f_1,f_2, g_1, g_2)\in C(\mathbb{R}^+ ; L^1((1+ |v|^2) dv dx))$ of the initial value problem \eqref{BGK} coupled with \eqref{kin_Temp} and \eqref{internal}. Moreover, for all $t>0$ the following bounds hold:
\begin{align*}
|u_k(t)|, |u_{12}(t)|, |u_{21}(t)|, |\eta_{l_k}|, |\bar{\eta}_{l_k}|, |\bar{\eta}_{l_k}|, T_k(t), T_{12}(t), T_{21}(t), N_q(f_k)(t) \leq A(t) &< \infty, \\
n_k(t) \geq C_0 e^{-(\widetilde{\nu}_{kk} + \widetilde{\nu}_{kj})t} &>0, \\
T_k(t), \Lambda_k(t), \Theta_k(t), \Lambda_{12}(t), \Theta_{12}(t), \Lambda_{21}(t), \Theta_{21}(t) \geq B(t)&>0,
\end{align*} 
for $k=1,2$ and some constants $A(t),B(t)$ given by
$$ A(t) = C e^{Ct}, ~ B(t) = C e^{-Ct}, ~ C>0$$
\label{ex2}
\end{theorem}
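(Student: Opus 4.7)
The plan is to adapt the standard mild-solution construction for BGK-type equations (as done for example for the single-species polyatomic model in \cite{Yunexpoly}) to the present coupled system. The first step is to integrate \eqref{BGK} and \eqref{kin_Temp} along the characteristics $(x - v(t-s), v, \eta_{l_k})$ to obtain the Duhamel representation
\begin{equation*}
f_k(t,x,v,\eta_{l_k}) = e^{-\int_0^t \Gamma_k(s)\, ds}\, f_k^0 + \int_0^t e^{-\int_s^t \Gamma_k(\tau)\, d\tau}\bigl[\nu_{kk} n_k M_k + \nu_{kj} n_j M_{kj}\bigr](s,\ldots)\, ds,
\end{equation*}
with loss rate $\Gamma_k = \nu_{kk} n_k + \nu_{kj} n_j$, and an analogous representation for the auxiliary $g_k := M_k$ from \eqref{kin_Temp}. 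The assumption \eqref{asscoll} on the collision frequencies is crucial here because it removes the apparent nonlinearity in the loss term: $\Gamma_k = \widetilde\nu_{kk}\, n_k/(n_1+n_2) + \widetilde\nu_{kj}\, n_j/(n_1+n_2)$ is bounded a priori by $\widetilde\nu_{kk}+\widetilde\nu_{kj}$ irrespective of the solution.

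Next, I would set up a Banach fixed-point iteration on the space $C([0,T]; L^1((1+|v|^2+|\eta_{l_k}|^2)\,dv\,d\eta_{l_k}\,dx))$ (with an additional weighted moment $N_q$ to control the temperatures). At each iteration, one reads off from \eqref{moments} the macroscopic quantities $n_k^{(n)}, u_k^{(n)}, T_k^{t,(n)}, T_k^{r,(n)}$, then uses \eqref{internal}, \eqref{equ_temp}, \eqref{density2}--\eqref{temp2} and Definition \ref{defeta} to define $\Lambda_k^{(n)}, \Theta_k^{(n)}, u_{kj}^{(n)}, \Lambda_{kj}^{(n)}, \Theta_{kj}^{(n)}, \bar\eta_{l_k,kj}^{(n)}$, and thereby the Maxwellians $M_k^{(n)}$ and $M_{kj}^{(n)}$; these are plugged into the right-hand side of the Duhamel formula to produce the next iterate. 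Non-negativity of each iterate is immediate from the mild formulation: the loss factor $e^{-\int \Gamma_k}$ is non-negative, the Maxwellians are non-negative, and the non-negativity of $f_k^0$ propagates.

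The a priori bounds are then obtained in the order suggested by the statement. A lower bound on the density follows from dropping the gain term in the mild formula, giving $n_k(t) \geq C_0 e^{-(\widetilde\nu_{kk}+\widetilde\nu_{kj})t}$. Upper bounds $A(t)=Ce^{Ct}$ on $|u_k|$, $T_k$, $|u_{kj}|$, $T_{kj}$ and $N_q(f_k)$ follow from a Gronwall argument on the evolution of moments, using \eqref{density2}--\eqref{temp2} to bound the interspecies quantities by the species quantities. For the positivity of the temperatures, the restrictions \eqref{gamma2}--\eqref{gammapos2} on $\delta,\beta,\gamma,\tilde\gamma$ are exactly what is needed to write $\Lambda_{kj}, \Theta_{kj}$ as non-negative combinations of $\Lambda_k, \Theta_k$ plus non-negative correction terms, yielding lower bounds $B(t)=Ce^{-Ct}$ by a Gronwall inequality applied to $\partial_t(n_k\Theta_k)$ in \eqref{relax} and the analogous equation for $\Lambda_k$.

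The hard part, and the reason the full proof is deferred to \cite{Pirner8}, is the contraction estimate for the fixed-point map. Two features make it delicate: first, the Maxwellians $M_k, M_{kj}$ depend nonlinearly (through exponentials of $1/T$) on the macroscopic moments of the iterate, so one needs uniform lower bounds on all temperatures along the iteration in order to Lipschitz-estimate these exponentials; second, the auxiliary variable $g_k = M_k$ introduced by \eqref{kin_Temp} couples back into \eqref{BGK} through the updated $\Theta_k$, so the contraction has to be done simultaneously for the vector $(f_1,f_2,g_1,g_2)$. Once uniform a priori bounds on the iterates have been established as above, a short-time contraction in the weighted $L^1$-norm follows by standard estimates on differences of Maxwellians with comparable parameters, and the solution is extended globally in time by re-applying the construction on successive intervals, since the bounds $A(t), B(t)$ remain finite and strictly positive on any bounded interval.
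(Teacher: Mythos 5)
The paper does not actually prove Theorem \ref{ex2}: it is imported wholesale from \cite{Pirner8}, so there is no in-paper argument to compare yours against. Your outline is the standard mild-solution construction (Duhamel along characteristics, Banach fixed point on a weighted $L^1$ space, a priori bounds, then contraction), and this is certainly the right family of techniques and almost surely the route taken in the cited reference. Your observation that assumption \eqref{asscoll} makes the loss rate $\Gamma_k$ a priori bounded by $\widetilde\nu_{kk}+\widetilde\nu_{kj}$ is exactly the point of that hypothesis, and your derivation of the density lower bound by dropping the gain term is correct modulo the (acknowledged) assumption that $\int f_k^0(x-vt,v)\,dv$ is bounded below uniformly, which is one of the ``certain assumptions on the initial data.''

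That said, as a proof the proposal has a genuine gap beyond the deferred contraction estimate, namely the lower bound $T_k,\Lambda_k,\Theta_k,\dots \geq B(t)$. You propose to get it by Gronwall applied to $\partial_t(n_k\Theta_k)$ in \eqref{relax} ``and the analogous equation for $\Lambda_k$.'' In the space-inhomogeneous setting these are not ODEs: they carry the transport term $\nabla_x\cdot(n_k\Theta_k u_k)$, and they couple to $\Theta_{kj}$, $\Lambda_{kj}$, which themselves need lower bounds, so the argument does not close as stated. The standard way out — and the reason the quantity $N_q(f_k)$ appears in the statement of the theorem at all — is to first propagate a pointwise weighted bound $f_k \leq N_q(f_k)/(1+|v|^q)$ through the Duhamel formula, and then deduce a lower bound on the translational temperature from the elementary fact that a nonnegative function with density bounded below and pointwise bound $N_q$ cannot have arbitrarily small second moment about its mean; the bounds on $\Theta_k$ and on the mixture temperatures then follow from \eqref{internal}, \eqref{contemp2}, \eqref{temp2} and the sign restrictions \eqref{gamma2}--\eqref{gammapos2}. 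Without some such mechanism the Lipschitz estimates on the Maxwellians (which involve $\Lambda_k^{-d/2}$, $\Theta_k^{-l_k/2}$ and exponentials of $1/\Lambda_k$, $1/\Theta_k$) cannot be performed, so this is not a cosmetic omission but the load-bearing step of the whole fixed-point argument.
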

\subsection{Equilibrium, H-theorem and entropy inequality}
In \cite{Pirner5}, the following characterization of equilibrium and the H-theorem is proven.
\begin{theorem}[Equilibrium]
Assume $f_1, f_2 >0$ with $f_1$ and $f_2$ independent of $x$ and $t$.
Assume the conditions \eqref{density2}, \eqref{convexvel2}, \eqref{veloc2}, \eqref{contemp2} and \eqref{temp2}, $\delta \neq 1, \alpha \neq 1, l_1,l_2\neq 0$, so that all temperatures are positive. 

Then $f_1$ and $f_2$ are Maxwell distributions with equal mean velocities $u:=u_1=u_2=u_{12}=u_{21}$ and temperatures $T:=T_1^{r}=T_2^{r}=T_1^{t}=T_2^{t}=\Lambda_1=\Lambda_2=\Theta_1=\Theta_2=\Theta_{12}=\Theta_{21}= \Lambda_{12}=\Lambda_{21}$. This means $f_k$ is given by
$$ M_k(x,v,\eta_{l_k},t) = \frac{n_k}{\sqrt{2 \pi \frac{T}{m_k}}^d } \frac{1}{\sqrt{2 \pi \frac{T}{m_k}}^{l_k}} \exp({- \frac{|v-u|^2}{2 \frac{T}{m_k}}}- \frac{|\eta_{l_k}- \bar{\eta}_{l_k}|^2}{2 \frac{T}{m_k}}), \quad k=1,2.$$
\label{equilibrium}
\end{theorem}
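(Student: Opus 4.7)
The plan is to exploit stationarity of both \eqref{BGK} and \eqref{kin_Temp} and then invoke the H-theorem of \cite{Pirner5} to collapse the resulting convex combinations down to single Maxwellians with common parameters.

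\textbf{Step 1 (stationarity gives $M_k = \widetilde{M}_k$).} Since $f_k$ is independent of $x$ and $t$, so are all its moments and hence also $M_k$, $M_{kj}$, $\widetilde{M}_k$. Setting the left-hand side of \eqref{BGK} to zero, I would first obtain
\[
f_k \;=\; \frac{\nu_{kk} n_k M_k + \nu_{kj} n_j M_{kj}}{\nu_{kk} n_k + \nu_{kj} n_j},
\]
which is a convex combination of two Maxwellians. Doing the same in \eqref{kin_Temp} and subtracting the stationary form of \eqref{BGK} cancels the two cross terms and leaves $\frac{\nu_{kk} n_k}{Z_r^k}\frac{d+l_k}{d}(\widetilde{M}_k - M_k) = 0$, hence $M_k = \widetilde{M}_k$. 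Comparing the Gaussian exponents and prefactors in \eqref{Maxwellian} and \eqref{Max_equ} then forces $\Lambda_k = \Theta_k = T_k$, with $T_k$ as in \eqref{equ_temp}, and $\bar{\eta}_{l_k} = \bar{\eta}_{l_k,kj}$.

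\textbf{Step 2 (H-theorem collapses the mixture).} I expect the main obstacle to be promoting $f_k$ from the convex-combination form above to a single Maxwellian: stationarity alone does not rule out a genuine Gaussian mixture as a steady state. To close this gap I would invoke the H-theorem proved in \cite{Pirner5}: because $f_k$ is stationary, the total entropy has vanishing time-derivative, so every entropy production term must vanish. Each such term is, after integration by parts in the convection, of the form $-\int (M-f)(\ln f - \ln M)\,dv\,d\eta$, which by the elementary inequality $(a-b)(\ln a - \ln b)\geq 0$ is non-positive, with equality if and only if $f = M$ almost everywhere. This forces $f_k = M_k$ (so $f_k$ is Maxwellian) and also $f_k = M_{kj}$, whence $M_k = M_{kj}$ for $j \neq k$.

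\textbf{Step 3 (algebraic collapse of the parameters).} From $M_k = M_{kj}$ one reads off $u_k = u_{kj}$, $\bar{\eta}_{l_k} = \bar{\eta}_{l_k,kj}$, $\Lambda_k = \Lambda_{kj}$, and $\Theta_k = \Theta_{kj}$ (the density equality is \eqref{density2}). Inserting $u_1 = u_{12}$ into \eqref{convexvel2} yields $(1-\delta)(u_1 - u_2) = 0$, so $u_1 = u_2$ by $\delta \neq 1$; then \eqref{veloc2} gives $u_{21} = u_2$. Using $|u_1 - u_2|^2 = 0$, substituting $\Lambda_1 = \Lambda_{12}$ into \eqref{contemp2} gives $(1-\alpha)(\Lambda_1 - \Lambda_2) = 0$, hence $\Lambda_1 = \Lambda_2$ by $\alpha \neq 1$. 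Combining this with $\Lambda_k = \Theta_k = T_k$ from Step~1 and checking the cross-species formulas \eqref{temp2} for $\Lambda_{21}$ and $\Theta_{21}$, all the temperatures collapse to a common value $T$, and $f_k$ takes the claimed Maxwellian form with common mean velocity $u$ and common temperature $T$.
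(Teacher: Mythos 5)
Your Steps 1 and 3 are exactly the route the paper takes (see the proof of Theorem \ref{equilibrium3}, which is the same argument for the modified model and explicitly defers "the rest" to the proof of Theorem \ref{equilibrium}): stationarity of \eqref{kin_Temp} combined with stationarity of \eqref{BGK} kills the $\widetilde{M}_k - M_k$ term, forcing $M_k = \widetilde{M}_k$ and hence $\Lambda_k = \Theta_k = T_k$, after which the consistency relations \eqref{density2}--\eqref{temp2} with $\delta,\alpha \neq 1$ collapse all parameters. The problem is your Step 2. First, Theorem \ref{H-theorem} carries hypotheses that Theorem \ref{equilibrium} does not assume, namely $\nu_{11} n_1 \geq \nu_{12} n_2$ and $\nu_{22} n_2 \geq \nu_{21} n_1$ (and the entropy-inequality corollary needs in addition $z_k$ small and $T_k^r \geq \widetilde{C}\Theta_k$), so invoking it proves a strictly weaker statement. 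Second, your claim that each entropy-production term has the form $-\int (M-f)(\ln f - \ln M)$ and is therefore individually non-positive is false for this model: the companion integral $\int (M_k - f_k)\ln M_k\,dv\,d\eta_{l_k}$ does not vanish, because $M_k$ carries the temperature $\Lambda_k \neq T_k^t$ and so does not share all second moments with $f_k$. That is precisely why Lemma \ref{one_species} must group the one-species relaxation term with the $\widetilde{M}_k$-term, and why the cross terms are only controlled jointly through the inequality $\nu_{12}n_2 H(M_{12}) + \nu_{21}n_1 H(M_{21}) \leq \nu_{12}n_2 H(M_1) + \nu_{21}n_1 H(M_2)$. Hence "the sum is zero, so every term is zero" does not follow. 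Third, the equality case of the H-theorem is essentially the statement you are trying to prove, so the argument is close to circular.

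The good news is that Step 2 is unnecessary, and the ingredients to replace it are already in your Step 3. From the stationary identity $f_k = \lambda_k M_k + (1-\lambda_k) M_{kj}$ with $\lambda_k = \nu_{kk}n_k/(\nu_{kk}n_k + \nu_{kj}n_j) < 1$, take moments and use that $M_k$ has the same $n_k$, $u_k$, $\bar{\eta}_{l_k}$ and internal energy as $f_k$ by construction: the $v$-moment gives $u_k = u_{kj}$ directly (no pointwise identification of $f_k$ with $M_{kj}$ is needed), the $\eta$-moment gives $\bar{\eta}_{l_k} = \bar{\eta}_{l_k,kj}$, and the energy moments together with \eqref{internal} and $\Lambda_k = \Theta_k$ from Step 1 give $d\Lambda_{kj} + l_k\Theta_{kj} = (d+l_k)\Lambda_k$. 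Feeding these into \eqref{convexvel2}, \eqref{veloc2}, \eqref{contemp2} and \eqref{temp2} with $\delta \neq 1$, $\alpha \neq 1$ yields $u_1 = u_2 = u_{12} = u_{21}$ and the equality of all temperatures; only then do you conclude $M_{kj} = M_k$, so that the convex combination degenerates and $f_k = M_k$ is a single Maxwellian. Reordered this way, your proof coincides with the paper's and uses only the stated hypotheses.
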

\begin{lemma}[Contribution to the H-theorem from the one species relaxation terms]
Assume $f_1, f_2 >0$. 
Then
$$ \int \ln f_k (M_k - f_k) dv d\eta_{l_k} + 
\int \ln M_k ( \widetilde{M}_k - M_k) dv d\eta_{l_k} \leq 0, \quad k=1,2, $$ with equality if and only if $M_k=f_k$ and  $\Lambda_k=\Theta_k=T_k^{r} = T_k^{t}$.
\label{one_species}
\end{lemma}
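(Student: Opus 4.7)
The plan is to rewrite the sum of the two integrals as (minus) a sum of three classical relative entropies, and conclude by Gibbs' inequality. With the notation $H(g|h)=\int g\ln(g/h)\,dv\,d\eta_{l_k}$ from the excerpt, denote the two pieces by $I_1 =\int \ln f_k\,(M_k-f_k)\,dv\,d\eta_{l_k}$ and $I_2=\int \ln M_k\,(\widetilde M_k-M_k)\,dv\,d\eta_{l_k}$. Expanding yields $I_1+I_2 = \int M_k\ln f_k - \int f_k\ln f_k + \int\widetilde M_k\ln M_k - \int M_k\ln M_k$. Inserting $\pm\int \widetilde M_k\ln\widetilde M_k$ and regrouping produces the identity $I_1+I_2 = -H(M_k|f_k) - H(\widetilde M_k|M_k) - \bigl(\int f_k\ln f_k-\int\widetilde M_k\ln\widetilde M_k\bigr)$.

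The key step is to identify the last parenthesis as $H(f_k|\widetilde M_k)$, which reduces to the identity $\int f_k\ln\widetilde M_k = \int\widetilde M_k\ln\widetilde M_k$. Because $\ln\widetilde M_k$ is a quadratic polynomial in $v-u_k$ and $\eta_{l_k}-\bar\eta_{l_k}$ with the single temperature $T_k$, this identity reduces to checking that $f_k$ and $\widetilde M_k$ share the moments $n_k$, $n_k u_k$, $n_k\bar\eta_{l_k}$, and the total internal energy. The first three match by construction of $\widetilde M_k$, and the total-energy identity $\tfrac{d}{2}n_kT_k^t+\tfrac{l_k}{2}n_kT_k^r=\tfrac{d+l_k}{2}n_kT_k$ is exactly the algebraic constraint \eqref{internal} rewritten through \eqref{equ_temp}.

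Once the decomposition $I_1+I_2=-H(M_k|f_k)-H(\widetilde M_k|M_k)-H(f_k|\widetilde M_k)$ is in hand, each relative entropy is nonnegative by Gibbs' inequality (equivalently $x\ln x-x+1\geq 0$ for $x\geq 0$, with the convention $0\ln 0=0$), so $I_1+I_2\leq 0$. Equality forces all three relative entropies to vanish, hence $f_k=M_k=\widetilde M_k$ a.e. Comparing the parameters of the three Maxwellians then yields $T_k^t=\Lambda_k$, $T_k^r=\Theta_k$, and $\Lambda_k=\Theta_k=T_k$, so $\Lambda_k=\Theta_k=T_k^r=T_k^t$, matching the stated equality case.

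The main obstacle I anticipate is recognizing this three-entropy decomposition rather than bounding $I_1$ and $I_2$ separately. The naive route using $\int\ln f_k(M_k-f_k)\leq\int\ln M_k(M_k-f_k)$ (from monotonicity of $\ln$), together with $I_2\leq 0$ (which follows directly from $\int M_k\ln\widetilde M_k=\int\widetilde M_k\ln\widetilde M_k$), leaves a residual proportional to $(T_k^r-T_k^t)(\Theta_k-\Lambda_k)$ whose sign is not fixed by \eqref{internal} alone, because the model has one more degree of freedom between the two temperature pairs than the constraint removes. Only the three-entropy identity uses the total-energy matching sharply enough to close the estimate in all cases.
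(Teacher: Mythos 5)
Your proof is correct and follows essentially the same route the paper relies on (and attributes to \cite{Pirner5}): the termwise convexity inequality \eqref{convex} together with the entropy comparison $\int \widetilde{M}_k \ln \widetilde{M}_k \leq \int f_k \ln f_k$, which holds because $f_k$ and $\widetilde{M}_k$ share density, momentum, mean internal variable and total energy via \eqref{internal} and \eqref{equ_temp}; writing this as the exact identity $I_1+I_2=-H(M_k|f_k)-H(\widetilde{M}_k|M_k)-H(f_k|\widetilde{M}_k)$ is just a sharper packaging of the same steps and makes the equality case immediate. Your closing remark that the two integrals cannot be bounded separately (the sign of $\int \ln f_k\,(M_k-f_k)$ alone is not controlled because $M_k$ carries the temperatures $\Lambda_k,\Theta_k$ rather than $T_k^{t},T_k^{r}$) correctly identifies why the terms must be combined before invoking Gibbs' inequality.
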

\begin{theorem}[H-theorem for mixture]
Assume $f_1, f_2 >0$. Assume 
$\nu_{11} n_1 \geq \nu_{12} n_2$, $\nu_{22} n_2 \geq \nu_{21} n_1$, $\alpha, \delta\neq 1, l_1,l_2 \neq 0.$
Assume the relationship between the collision frequencies \eqref{coll}, the conditions for the interspecies Maxwellians \eqref{density2}, \eqref{convexvel2}, \eqref{veloc2}, \eqref{contemp2} and \eqref{temp2} and the positivity of all temperatures, then
\begin{align*}
\sum_{k=1}^2 [ \nu_{kk} n_k \int (M_k - f_k) \ln f_k dv d\eta_{l_k} + 
\nu_{kk} n_k
 \int (\widetilde{M}_k - M_k) \ln M_k dv d\eta_{l_k}] \\+ 
 \nu_{11} n_1
  \int (\widetilde{M}_1 - M_1) \ln M_1 dv d\eta_{l_1} + 
  \nu_{22} n_2
  \int (\widetilde{M}_2 - M_2) \ln M_2 dv d\eta_{l_2} \\ +\nu_{12} n_2 \int (M_{12} - f_1) \ln f_1 dv d\eta_{l_1} + \nu_{21} n_1 \int (M_{21} - f_2) \ln f_2 dv d\eta_{l_2}  \leq 0,
\end{align*}
with equality if and only if $f_1$ and $f_2$ are Maxwell distributions with equal mean velocities and all temperatures coincide.
\label{H-theorem}
\end{theorem}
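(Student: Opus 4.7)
The plan is to split the left-hand side into the intraspecies contribution (the $\nu_{kk}n_k$ terms acting on $(M_k-f_k)$ and $(\widetilde M_k - M_k)$) and the interspecies contribution (the $\nu_{kj}n_j$ terms involving $M_{kj}$), and bound each separately. The intraspecies block is exactly what Lemma~\ref{one_species} controls, so the intraspecies contribution
\[
\sum_{k=1}^2 \nu_{kk} n_k \Bigl[\int \ln f_k\,(M_k - f_k)\,dv\,d\eta_{l_k} + \int \ln M_k\,(\widetilde M_k - M_k)\,dv\,d\eta_{l_k}\Bigr]
\]
is already $\leq 0$, with equality iff $f_k=M_k$ and $\Lambda_k=\Theta_k=T_k^t=T_k^r$.

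Next I would handle the remaining duplicate $\nu_{kk}n_k \int \ln M_k\,(\widetilde M_k - M_k)\,dv\,d\eta_{l_k}$ terms that appear on the second line. Using the explicit Gaussian form of $M_k$, a direct computation (after noting that $M_k$ and $\widetilde M_k$ share density, momentum, and \emph{total} internal energy but not the split between translational and internal parts) yields
\[
\int \ln M_k\,(\widetilde M_k - M_k)\,dv\,d\eta_{l_k}
= -\,\frac{m_k\,d\,l_k\,n_k\,(\Lambda_k-\Theta_k)^2}{2(d+l_k)\,\Lambda_k\Theta_k}\leq 0,
\]
via the identities $T_k-\Lambda_k=\frac{l_k(\Theta_k-\Lambda_k)}{d+l_k}$ and $T_k-\Theta_k=\frac{d(\Lambda_k-\Theta_k)}{d+l_k}$ coming from \eqref{equ_temp}. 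Equality holds iff $\Lambda_k=\Theta_k$, i.e.\ iff $M_k=\widetilde M_k$, consistent with the intraspecies equality case.

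The main obstacle is the interspecies contribution $\nu_{12}n_2\int(M_{12}-f_1)\ln f_1\,dv\,d\eta_{l_1}+\nu_{21}n_1\int(M_{21}-f_2)\ln f_2\,dv\,d\eta_{l_2}$, because $M_{kj}$ does not share moments with $f_k$. The plan here is to use the pointwise inequality $(M_{kj}-f_k)(\ln M_{kj}-\ln f_k)\geq 0$ to upgrade $\ln f_k$ to $\ln M_{kj}$, giving
\[
\nu_{kj} n_j\!\int(M_{kj}-f_k)\ln f_k \,dv\,d\eta_{l_k}
\leq \nu_{kj} n_j\!\int(M_{kj}-f_k)\ln M_{kj}\,dv\,d\eta_{l_k}.
\]
Plugging in the explicit Gaussian form of $\ln M_{kj}$ reduces the right-hand side to a combination of the moment differences of $M_{kj}$ and $f_k$. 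Summing over $k=1,2$, the cross terms in total momentum and total internal energy cancel by the conservation identities built into \eqref{density2}--\eqref{temp2} (these are precisely the identities established in Theorem~3.2 of \cite{Pirner5}). What survives is a sum of squared differences $|u_1-u_2|^2$, $|\bar\eta_{l_1}-\bar\eta_{l_2}|^2$, $(\Lambda_1-\Lambda_2)^2$, $(\Theta_1-\Theta_2)^2$, etc., multiplied by species-dependent coefficients whose sign needs to be controlled. The assumption $\nu_{11}n_1\geq\nu_{12}n_2$ and $\nu_{22}n_2\geq\nu_{21}n_1$, together with the restrictions \eqref{gamma2}--\eqref{gammapos2} on $\delta,\beta,\gamma,\tilde\gamma$, is what lets any residual positive contribution be absorbed into the strictly negative intraspecies slack already produced by Lemma~\ref{one_species}.

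Finally, to characterize equality, I would trace back: Lemma~\ref{one_species} forces $f_k=M_k$ and $\Lambda_k=\Theta_k=T_k^t=T_k^r$ for each $k$; the interspecies step forces $f_k=M_{kj}$ and hence $u_1=u_2$, $\bar\eta_{l_1}=\bar\eta_{l_2}$, and equality of all cross-temperatures $\Lambda_{kj}=\Theta_{kj}$ with the species temperatures. Combined with \eqref{contemp2}--\eqref{temp2}, this collapses all temperatures to a single value, giving the claimed equilibrium as in Theorem~\ref{equilibrium}. The delicate step throughout is the interspecies bound: making the sign-control on the quadratic remainder work is where the hypotheses $\nu_{kk}n_k\geq\nu_{kj}n_j$ are genuinely needed.
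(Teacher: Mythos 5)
Your treatment of the intraspecies blocks is sound: Lemma~\ref{one_species} handles the first line, and your explicit computation of the duplicated term is essentially right (it evaluates to $-\frac{d\,l_k\,n_k(\Lambda_k-\Theta_k)^2}{2(d+l_k)\Lambda_k\Theta_k}$; the factor $m_k$ in your formula should not be there, since the masses cancel against the temperature normalization, but the sign and structure are correct). The genuine gap is in the interspecies step, and it is twofold. First, after upgrading $\ln f_k$ to $\ln M_{kj}$, the quantity $\int(M_{kj}-f_k)\ln M_{kj}\,dv\,d\eta_{l_k}$ involves the moments of $f_k$ and $M_{kj}$ weighted by the pair-dependent coefficients $1/\Lambda_{kj}$, $u_{kj}/\Lambda_{kj}$, $1/\Theta_{kj}$, which are \emph{not} collision invariants; the conservation identities behind \eqref{density2}--\eqref{temp2} control sums of the form $\sum_k\nu_{kj}n_j\int(M_{kj}-f_k)\psi$ only for $\psi\in\{1,\,v,\,m_k|v|^2+m_k|\eta_{l_k}|^2\}$, so the claimed cancellation of cross terms does not occur. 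Second, and more structurally, your plan to absorb the residual positive contribution into the ``strictly negative intraspecies slack'' of Lemma~\ref{one_species} cannot work: that slack vanishes whenever $f_k=M_k$ and $\Lambda_k=\Theta_k$, yet in such configurations the interspecies terms need not vanish, so there is nothing to absorb into. You have also already spent the duplicated second-line terms by discarding them as individually non-positive, whereas they are precisely the reservoir needed for the absorption.

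The mechanism the paper relies on (stated as inequality \eqref{eq:theo2.7}, i.e.\ Lemma~3.5 of \cite{Pirner5}, and used again in the proof of Theorem~\ref{H-theorem3}) is an entropy comparison at the level of the full distributions: by convexity, $\int(M_{kj}-f_k)\ln f_k\,dv\,d\eta_{l_k}\le H(M_{kj})-H(f_k)$, and then
\begin{align*}
\nu_{12}n_2\,H(M_{12})+\nu_{21}n_1\,H(M_{21})\le \nu_{12}n_2\,H(M_1)+\nu_{21}n_1\,H(M_2),
\end{align*}
which converts the interspecies contribution into $\sum_k\nu_{kj}n_j\bigl(H(M_k)-H(f_k)\bigr)$. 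Writing $H(M_k)-H(f_k)=\bigl(H(M_k)-H(\widetilde M_k)\bigr)+\bigl(H(\widetilde M_k)-H(f_k)\bigr)$, the second bracket is non-positive by the Gibbs inequality, while the first bracket is non-negative and is dominated by $-\int(\widetilde M_k-M_k)\ln M_k\,dv\,d\eta_{l_k}$ via convexity; this is exactly where the duplicated second-line terms and the hypotheses $\nu_{11}n_1\ge\nu_{12}n_2$, $\nu_{22}n_2\ge\nu_{21}n_1$ are consumed. Your proposal never invokes the comparison between $H(M_{12}),H(M_{21})$ and $H(M_1),H(M_2)$, which is the key lemma, so as written the argument does not close.
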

Define 
$\frac{1}{z_k} := \frac{1}{Z_k^r} \frac{d+l_k}{d}, ~ k=1,2$ and the
 total entropy \begin{align*}
 H(f_1,f_2) = \int (f_1 \ln f_1  + 3 z_1 M_1 \ln M_1) dv d\eta_{l_1} + \int (f_2 \ln f_2+ 3 z_2 M_2 \ln M_2 ) dvd\eta_{l_2}.
 \end{align*}
  \begin{cor}[Entropy inequality for mixtures]
Assume $f_1, f_2 >0$,  $\Lambda_k$ and $\Theta_k$ are bounded from below and above and $T_k^{r} \geq \widetilde{C} \Theta_k$ for an appropriate $\widetilde{C}$ and $z_k$ small enough.
Assume relationship \eqref{coll}, the conditions \eqref{density2}, \eqref{convexvel2}, \eqref{veloc2}, \eqref{contemp2} and \eqref{temp2} and the positivity of all temperatures, then we have the following entropy inequality
\begin{align*}
\partial_t \left(  H(f_1,f_2) \right) \\+ \nabla_x \cdot \left(\int  v (f_1 \ln f_1  + 3 z_1 M_1 \ln M_1) dv d\eta_{l_1} + \int v( f_2 \ln f_2+ 3 z_2 M_2 \ln M_2 ) dv d\eta_{l_2} \right) \leq 0,
\end{align*}
with equality if and only if $f_1$ and $f_2$ are Maxwell distribution and all temperatures coincide.
\end{cor}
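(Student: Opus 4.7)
The plan is to produce the identity $\partial_t H(f_1,f_2) + \nabla_x\cdot F = D$ by testing the kinetic equations against the appropriate logarithms and then to control the dissipation $D$ by combining Theorem \ref{H-theorem} with extra sign arguments and a smallness argument in $z_k$.

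First, I would multiply the BGK equation \eqref{BGK} for $f_k$ by $\ln f_k+1$ and integrate over $v$ and $\eta_{l_k}$, using $(\ln f_k+1)\partial_t f_k = \partial_t(f_k\ln f_k)$ and similarly for the transport term, to obtain
\begin{equation*}
\partial_t\!\!\int\! f_k\ln f_k \, dv\,d\eta_{l_k} + \nabla_x\cdot\!\!\int\! v\,f_k\ln f_k\, dv\,d\eta_{l_k} = \int (\ln f_k+1)\,Q_k\, dv\,d\eta_{l_k},
\end{equation*}
where $Q_k$ is the right-hand side of \eqref{BGK}. Since $n_{12}=n_1,\,n_{21}=n_2$, the integral $\int Q_k$ vanishes, so the ``$+1$'' contribution drops and only the $\ln f_k$-weighted collision terms survive. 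I do the same with \eqref{kin_Temp} for $M_k$, multiplied by $3z_k(\ln M_k+1)$; again $\int R_k=0$ by construction, and the factor $z_k\cdot 1/z_k$ turns the relaxation term into $3\nu_{kk}n_k\int(\widetilde M_k-M_k)\ln M_k$. Summing over $k$ recovers exactly the flux $F$ appearing in the statement and displays the dissipation as
\begin{equation*}
D=\underbrace{\sum_k\!\left[\nu_{kk}n_k\!\!\int\!(M_k-f_k)\ln f_k +2\nu_{kk}n_k\!\!\int\!(\widetilde M_k-M_k)\ln M_k+\nu_{kj}n_j\!\!\int\!(M_{kj}-f_k)\ln f_k\right]}_{=:\mathcal H}+B+C,
\end{equation*}
with $B=\sum_k \nu_{kk}n_k\int(\widetilde M_k-M_k)\ln M_k$ and $C=3z_k\sum_k[\nu_{kk}n_k\int(M_k-f_k)\ln M_k+\nu_{kj}n_j\int(M_{kj}-f_k)\ln M_k]$, where the ``excess'' factor 1 in front of $B$ comes from having chosen the coefficient $3$ in $H$ versus the $2$ appearing in Theorem \ref{H-theorem}.

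Next, $\mathcal H\le 0$ is exactly the H-theorem (Theorem \ref{H-theorem}). For $B$ I argue term-by-term: since $\widetilde M_k$ and $M_k$ have the same density $n_k$, Gibbs' inequality gives $\int\widetilde M_k\ln M_k \le \int \widetilde M_k\ln\widetilde M_k$, and an explicit Gaussian entropy computation combined with the concavity of $\ln$ applied to $T_k=(d\Lambda_k+l_k\Theta_k)/(d+l_k)$ yields $\int\widetilde M_k\ln\widetilde M_k\le \int M_k\ln M_k$, so $B\le 0$.

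Finally I control $C$. Because $\ln M_k$ is a polynomial of degree two in $v-u_k$ and $\eta_{l_k}-\bar\eta_{l_k}$, every integral in $C$ reduces to second moments of $M_k,f_k,M_{kj}$ that can be computed explicitly in terms of $n_k,u_k,\bar\eta_{l_k},\Lambda_k,\Theta_k,T_k^t,T_k^r$ and the cross quantities $\Lambda_{kj},\Theta_{kj},u_{kj},\bar\eta_{l_k,kj}$; using the algebraic relation \eqref{internal}, the self-collision piece simplifies to $\frac{l_k n_k(T_k^r-\Theta_k)(\Lambda_k-\Theta_k)}{2\Lambda_k\Theta_k}$, and the cross piece has an analogous closed form. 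The hypotheses that $\Lambda_k,\Theta_k$ are bounded above and below, together with $T_k^r\ge\widetilde C\,\Theta_k$, make $|C|$ bounded by a constant (independent of $z_k$) times $z_k$ times a discrepancy of temperatures, while the same computation shows $-B$ is quadratic in these discrepancies; Young's inequality then gives $C\le -B$ for $z_k$ small enough, so that $D\le \mathcal H+B+C\le 0$. Equality in Theorem \ref{H-theorem} already forces $f_k$ to be Maxwell distributions with a single common temperature, at which point $B=C=0$ as well, so equality in the entropy inequality characterises precisely this equilibrium. The main obstacle is the last step: the accounting in paragraph 3, namely showing the quadratic negative contribution from $B$ dominates the linear $O(z_k)$ contribution from $C$ uniformly in the allowed range of temperatures, which is exactly where the assumptions on the boundedness of $\Lambda_k,\Theta_k$ and on $T_k^r/\Theta_k$ are used, and where the smallness of $z_k$ becomes essential.
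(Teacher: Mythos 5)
Your overall skeleton is the same as the paper's: test \eqref{BGK} against $\ln f_k+1$ and \eqref{kin_Temp} against $3z_k(\ln M_k+1)$, use density conservation to kill the ``$+1$'' terms, split the dissipation into the H-theorem combination $\mathcal H$ (coefficient $2$ on the $\int(\widetilde M_k-M_k)\ln M_k$ terms), one leftover unit $B$, and the $O(z_k)$ remainder $C$; your sign argument for $B$ (Gibbs plus the concavity computation of Lemma \ref{Mtilde}) and your explicit moment formulas, including $\int(M_k-f_k)\ln M_k = n_k\frac{l_k}{2}\frac{(T_k^r-\Theta_k)(\Lambda_k-\Theta_k)}{\Lambda_k\Theta_k}$, are correct. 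The gap is in the last step, ``Young's inequality then gives $C\le -B$ for $z_k$ small enough.'' The quantity $-B_k$ is quadratic only in the single discrepancy $\Lambda_k-\Theta_k$ (it equals $\nu_{kk}n_k\,n_k\frac{dl_k}{2(d+l_k)}\frac{(\Lambda_k-\Theta_k)^2}{\Lambda_k\Theta_k}$), whereas $C_k$ involves other, independent discrepancies: Young applied to $(T_k^r-\Theta_k)(\Lambda_k-\Theta_k)$ leaves an uncontrolled remainder $(T_k^r-\Theta_k)^2$, and the cross-species piece $\frac{d}{2}\frac{T_k^t-\Lambda_{kj}}{\Lambda_k}+\frac{l_k}{2}\frac{T_k^r-\Theta_{kj}}{\Theta_k}+\dots$ need not vanish when $\Lambda_k=\Theta_k$ and can be positive (e.g.\ when species $k$ is hotter than species $j$). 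So near the set $\{\Lambda_k=\Theta_k\}$ one can have $-B_k$ arbitrarily small while $C_k>0$ is of lower order in $|\Lambda_k-\Theta_k|$, and no fixed $z_k>0$ yields $C\le -B$. The statement is not a near-equilibrium perturbation result, and ``$z_k$ small'' cannot be allowed to depend on the instantaneous temperature configuration.

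The paper closes this step by a different mechanism, encoded in Lemmas \ref{lemm1}--\ref{lemm3}. The hypothesis $T_k^r\ge c_k\Theta_k$ with the large constant $c_k=\frac{(d+l_k)^2}{l_k}\max\{1,A(t)/B(t)\}$, combined with \eqref{internal} and the a priori bounds of Theorem \ref{ex2}, forces the ``good'' term $\frac{d}{2}\frac{T_k}{\Lambda_k}+\frac{l_k}{2}\frac{T_k}{\Theta_k}-\frac{d+l_k}{2}$ to be bounded below by $\frac{l_k(d+l_k)}{2}\max\{1,A/B\}-\frac{d+l_k}{2}$ and the bracket multiplying $3z_k$ to be bounded above by $(\nu_{kk}n_k+\nu_{kj}n_j)\frac{(d+l_k)^2}{2}\max\{1,A/B\}$-type quantities; the explicit smallness condition \eqref{inz} on $z_k$ (equivalently $\frac{\nu_{kk}n_k}{z_k}\ge(\widetilde\nu_{kk}+\widetilde\nu_{kj})\frac{A(T)}{B(T)}$) then gives $B_k+C_k\le 0$ directly, uniformly in the admissible temperatures. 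In particular, the hypothesis $T_k^r\ge\widetilde C\Theta_k$ together with \eqref{internal} actually excludes the degenerate configurations $\Lambda_k\approx\Theta_k$ that break your argument; this is where that hypothesis is really used, and your write-up does not exploit it. To repair your proof you should replace the Young's-inequality paragraph by the uniform lower/upper bounds of Lemmas \ref{lemm1} and \ref{lemm2}.
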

\subsection{Entropy dissipation estimates and convergence rate to equilibrium}
In the following we will prove three lemmas which will be used to prove the convergence rate to equilibrium for the model in this subsection.
\begin{lemma}
Assume that $f_1,f_2 >0$. Let $\widetilde{M}_k$ be the Maxwellian defined by \eqref{Max_equ} and $\widetilde{M}_{kj}$ the Maxwellian defined in \eqref{Max_equ3}. Then \begin{align*}
\int \widetilde{M}_k \ln \widetilde{M}_k dv d\eta_{l_k} \leq \int M_k \ln M_k dv d\eta_{l_k},  
\end{align*} 
for $k,j=1,2, ~ k\neq j$.
\label{Mtilde}
\end{lemma}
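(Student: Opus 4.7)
The plan is to evaluate both sides of the inequality as explicit closed-form expressions in the macroscopic parameters $(n_k,\Lambda_k,\Theta_k,T_k,m_k)$ and then reduce the resulting scalar inequality to Jensen's inequality for the concave function $\log$. A key observation that simplifies the bookkeeping is that for a Gaussian of the form $\mathcal{G}(x) = \frac{n}{(2\pi\sigma^{2})^{D/2}}\exp(-|x-\mu|^{2}/(2\sigma^{2}))$ one has $\int \mathcal{G}\ln\mathcal{G}\,dx = n\bigl[\ln n - \tfrac{D}{2}\ln(2\pi\sigma^{2}) - \tfrac{D}{2}\bigr]$, a quantity that depends only on the variance $\sigma^{2}$ and not on the mean $\mu$. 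So the possibly different mean-$\eta$ shifts appearing in $M_k$ (namely $\bar{\eta}_{l_k}$) and in $\widetilde{M}_k$ (namely $\bar{\eta}_{l_k,kj}$) do not affect either entropy.

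First I would apply this computation to $M_k$, splitting its exponent into the $v$-block of dimension $d$ and variance $\Lambda_k/m_k$, and the $\eta_{l_k}$-block of dimension $l_k$ and variance $\Theta_k/m_k$, to obtain
\begin{equation*}
\int M_k\ln M_k\,dv\,d\eta_{l_k} = n_k\Bigl[\ln n_k - \tfrac{d}{2}\ln\tfrac{2\pi\Lambda_k}{m_k} - \tfrac{l_k}{2}\ln\tfrac{2\pi\Theta_k}{m_k} - \tfrac{d+l_k}{2}\Bigr].
\end{equation*}
Next, since $\widetilde{M}_k$ is a single Gaussian on $\mathbb{R}^{d+l_k}$ with variance $T_k/m_k$, the same formula gives
\begin{equation*}
\int \widetilde{M}_k\ln \widetilde{M}_k\,dv\,d\eta_{l_k} = n_k\Bigl[\ln n_k - \tfrac{d+l_k}{2}\ln\tfrac{2\pi T_k}{m_k} - \tfrac{d+l_k}{2}\Bigr].
\end{equation*}

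Subtracting, the $\ln n_k$ terms, the $(d+l_k)/2$ constants, and the $\ln(2\pi/m_k)$ contributions cancel exactly, and the inequality $\int\widetilde{M}_k\ln\widetilde{M}_k \le \int M_k\ln M_k$ reduces to
\begin{equation*}
\tfrac{d}{d+l_k}\ln\Lambda_k + \tfrac{l_k}{d+l_k}\ln\Theta_k \;\le\; \ln\Bigl(\tfrac{d}{d+l_k}\Lambda_k + \tfrac{l_k}{d+l_k}\Theta_k\Bigr),
\end{equation*}
where I used the definition \eqref{equ_temp} of $T_k$ on the right-hand side. This is Jensen's inequality for the strictly concave function $\log$ on $(0,\infty)$ with weights $d/(d+l_k)$ and $l_k/(d+l_k)$, applied to the positive values $\Lambda_k$ and $\Theta_k$ (positivity being guaranteed by Theorem~\ref{ex2}), with equality if and only if $\Lambda_k = \Theta_k$.

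There is no real obstacle here: the only care needed is to keep the normalization constants straight and to recognize that Gaussian differential entropy is mean-independent, which makes the mismatch between $\bar{\eta}_{l_k}$ and $\bar{\eta}_{l_k,kj}$ irrelevant. Conceptually this is just the minimum-entropy-principle statement that among distributions with the same density, bulk velocity and total kinetic energy, the isothermal Maxwellian $\widetilde{M}_k$ has the least value of $\int f\ln f$; here $M_k$ and $\widetilde{M}_k$ share precisely those moments by construction of $T_k$ via \eqref{internal}--\eqref{equ_temp}.
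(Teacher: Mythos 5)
Your proposal is correct and follows essentially the same route as the paper: both compute $\int M_k\ln M_k$ and $\int\widetilde M_k\ln\widetilde M_k$ explicitly via the Gaussian entropy formula and reduce the claim to $\frac{d}{2}\ln\Lambda_k+\frac{l_k}{2}\ln\Theta_k\le\frac{d+l_k}{2}\ln\bigl(\frac{d\Lambda_k+l_k\Theta_k}{d+l_k}\bigr)$, which is concavity of $\ln$. Your explicit remark that Gaussian differential entropy is mean-independent (so the mismatch between $\bar\eta_{l_k}$ and $\bar\eta_{l_k,kj}$ is harmless) is a small point the paper passes over silently, but it is the same argument.
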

\begin{proof}
Using that $\ln M_k = \ln(\frac{n_k}{\sqrt{2 \pi \frac{\Lambda_k}{m_k}}^n} \frac{1}{\sqrt{2 \pi \frac{\Theta_k}{m_k}}^{l_k}}) - \frac{|v-u_k|^2}{2 \frac{\Lambda_k}{m_k}} - \frac{|\eta_{l_k}|^2}{2 \frac{\Theta_k}{m_k}}$ and $\ln \widetilde{M}_k = \ln(\frac{n_k}{\sqrt{2 \pi \frac{\frac{1}{n+l_k}(d \Lambda_k + l_k \Theta_k)}{m_k}}^{l_k+d}}) - \frac{|v-u_k|^2}{2 \frac{\frac{1}{d+l_k}(d \Lambda_k + l_k \Theta_k)}{m_k}} - \frac{|\eta_{l_k}|^2}{2 \frac{\frac{1}{d+l_k}(d \Lambda_k + l_k \Theta_k)}{m_k}}$, we compute the integrals and obtain that the required inequality is equivalent to
\begin{align*}
\ln(\frac{n_k}{\sqrt{2 \pi \frac{\frac{1}{d+l_k}(d \Lambda_k + l_k \Theta_k)}{m_k}}^{d+l_k}} ) &\leq \ln(\frac{n_k}{\sqrt{2 \pi \frac{\Lambda_k}{m_k}}^d} \frac{1}{\sqrt{2 \pi \frac{\Theta_k}{m_k}}^{l_k}}) 
\end{align*}  which is equivalent to the condition
\begin{align*}
\frac{d+l_k}{2} \ln(\frac{1}{d+l_k}(d \Lambda_k + l_k \Theta_k)) &\geq \frac{d}{2} \ln \Lambda_k + \frac{l_k}{2}\ln \Theta_k 
\end{align*} 
This is true since $\ln$ is a concave function. 
\end{proof}
\begin{lemma}
Assume that $T_k^{r}, T_k^{t}, \Lambda_k, \Theta_k$ are the temperatures generated by solutions of \eqref{BGK} coupled with \eqref{kin_Temp} and \eqref{internal}. Assume that $l_k \geq 1$ and that
\begin{align}
T_k^{r} \geq c_k \Theta_k\quad \text{with} \quad c_k= \frac{1}{l_k}(d+l_k)^2 \max \left\lbrace 1, \frac{A(t)}{B(t)} \right\rbrace, 
\label{asstemp}
\end{align}
with $A$ and $B$ specified in theorem \ref{ex2}.
Then we have 
\begin{align}
\frac{d}{2} \frac{T_k}{\Lambda_k} + \frac{l_k}{2} \frac{T_k}{\Theta_k} \geq \frac{d+l_k}{2}
\end{align}
\begin{align}
\begin{split}
\nu_{kk} n_k \left(\frac{d}{2} \frac{T_k^{t}}{\Lambda_k} + \frac{l_k}{2} \frac{T_k^{r}}{\Theta_k}\right) + \nu_{kj} n_j \left( \frac{d}{2} \frac{T_k^{t}}{\Lambda_k} + \frac{l_k}{2} \frac{T_k^{r}}{\Theta_k}\right) \\\geq \nu_{kk} n_k \frac{d+l_k}{2} + \nu_{kj} n_j \left( \frac{d}{2} \frac{\Lambda_{kj}}{\Lambda_k} + \frac{l_k}{2} \frac{\Theta_{kj}}{\Theta_k} \right)
\end{split}
\end{align}
\label{lemm1}
\end{lemma}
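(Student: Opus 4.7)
The plan is to handle the two displayed inequalities separately and independently.

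For the first inequality, I would substitute the identity $T_k = (d\Lambda_k + l_k\Theta_k)/(d+l_k)$ from \eqref{equ_temp} into the left-hand side. After clearing denominators, $\frac{d}{2}\frac{T_k}{\Lambda_k} + \frac{l_k}{2}\frac{T_k}{\Theta_k} \geq \frac{d+l_k}{2}$ becomes algebraically equivalent to $d^2 + l_k^2 + d\,l_k\left(\frac{\Theta_k}{\Lambda_k} + \frac{\Lambda_k}{\Theta_k}\right) \geq (d+l_k)^2 = d^2 + 2\,d\,l_k + l_k^2$. Cancelling reduces the claim to $\frac{\Theta_k}{\Lambda_k} + \frac{\Lambda_k}{\Theta_k} \geq 2$, which is the AM--GM inequality applied to the two positive quantities $\Lambda_k,\Theta_k$ (positivity is guaranteed by the hypothesis and Theorem \ref{ex2}).

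For the second inequality, the crucial observation is that the same bracket $B_k := \frac{d}{2}\frac{T_k^{t}}{\Lambda_k} + \frac{l_k}{2}\frac{T_k^{r}}{\Theta_k}$ multiplies both $\nu_{kk} n_k$ and $\nu_{kj} n_j$ on the left-hand side. Since both coefficients are nonnegative, it suffices to establish the two pointwise estimates (i) $B_k \geq \frac{d+l_k}{2}$ and (ii) $B_k \geq \frac{d}{2}\frac{\Lambda_{kj}}{\Lambda_k} + \frac{l_k}{2}\frac{\Theta_{kj}}{\Theta_k}$; then multiplying (i) by $\nu_{kk} n_k$ and (ii) by $\nu_{kj} n_j$ and adding yields the claim. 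In both estimates I would simply drop the nonnegative term $\frac{d}{2}\frac{T_k^{t}}{\Lambda_k}$ and use the hypothesis $T_k^{r} \geq c_k \Theta_k$ together with the prescribed value of $c_k$ in \eqref{asstemp} to obtain $B_k \geq \frac{l_k c_k}{2} = \frac{(d+l_k)^2}{2}\max\{1, A(t)/B(t)\}$. For (i), since $d \geq 1$ and $l_k \geq 1$ give $d + l_k \geq 1$, we have $(d+l_k)^2 \geq d+l_k$, which yields the bound. For (ii), I would invoke the a priori bounds from Theorem \ref{ex2}, namely $\Lambda_{kj}, \Theta_{kj} \leq A(t)$ and $\Lambda_k, \Theta_k \geq B(t)$, to estimate $\frac{d}{2}\frac{\Lambda_{kj}}{\Lambda_k} + \frac{l_k}{2}\frac{\Theta_{kj}}{\Theta_k} \leq \frac{d+l_k}{2}\frac{A(t)}{B(t)}$; combined with the lower bound on $B_k$ this closes the estimate, again using $d+l_k \geq 1$.

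The argument is not technically difficult; rather, the main subtlety is recognizing the structural reason for the rather intricate choice of the constant $c_k = \frac{1}{l_k}(d+l_k)^2\max\{1, A(t)/B(t)\}$. The factor $(d+l_k)^2/l_k$ is exactly what is needed to drive estimate (i), while the factor $\max\{1, A(t)/B(t)\}$ is exactly what is needed to dominate the a priori bound from Theorem \ref{ex2} in estimate (ii). Once this bookkeeping is laid out, the rest is pure algebra together with AM--GM and the positivity and boundedness provided by Theorem \ref{ex2}.
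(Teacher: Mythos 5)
Your proposal is correct, and for the second inequality it is essentially the paper's own argument: the paper likewise bounds the right-hand side from above by $(\nu_{kk}n_k+\nu_{kj}n_j)\max\{1,A(t)/B(t)\}\frac{d+l_k}{2}$ using the a priori bounds of Theorem \ref{ex2}, and bounds the common bracket from below by $\frac{l_k}{2}c_k=\frac{(d+l_k)^2}{2}\max\{1,A(t)/B(t)\}$ after dropping the term $\frac{d}{2}\frac{T_k^t}{\Lambda_k}$; your split into the two pointwise estimates (i) and (ii) is only a cosmetic reorganization of the same comparison. Where you genuinely diverge is the first inequality. The paper proves it by again discarding $\frac{d}{2}\frac{T_k}{\Lambda_k}$, invoking the hypothesis $T_k^r\geq c_k\Theta_k$ together with $T_k\geq\frac{l_k}{d+l_k}T_k^r$ and $l_k\geq 1$, so its proof of that estimate depends on \eqref{asstemp}. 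Your route via $T_k=\frac{d\Lambda_k+l_k\Theta_k}{d+l_k}$ and $\frac{\Theta_k}{\Lambda_k}+\frac{\Lambda_k}{\Theta_k}\geq 2$ shows the first inequality is an unconditional consequence of the definition of $T_k$ and the positivity of $\Lambda_k,\Theta_k$ — no temperature assumption, no $l_k\geq 1$, and no recourse to Theorem \ref{ex2} is needed. That is a cleaner and strictly more general argument for that part, and it makes transparent that the restrictive hypothesis \eqref{asstemp} is really only needed for the second estimate (and, downstream, for the positivity of the denominator in Lemma \ref{lemm2}). Your correct identification of the two separate roles of the factors $(d+l_k)^2/l_k$ and $\max\{1,A(t)/B(t)\}$ in $c_k$ matches the structure of the paper's proof.
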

\begin{proof}
For the first estimate we use the positivity of the term $\frac{d}{2}\frac{T_k}{\Lambda_k}$, assumption \eqref{asstemp}, the definition of $T_k$ given by \eqref{equ_temp}, the positivity of $T_k^r$ and the assumption $l_k \geq 1$ to obtain
$$ \frac{d}{2} \frac{T_k}{\Lambda_k} + \frac{l_k}{2} \frac{T_k}{\Theta_k} \geq \frac{l_k}{2} \frac{T_k}{\frac{1}{c_k} T_k^{r}} \geq \frac{l_k^2}{2} \frac{c_k}{l_k+d} = \frac{l_k}{2} (d+l_k) \max \left\lbrace 1, \frac{A(t)}{B(t)} \right\rbrace \geq \frac{d+l_k}{2}$$
For the second estimate, we first use the estimates of theorem \ref{ex2} to obtain
\begin{align}
\begin{split}
\nu_{kk} n_k \frac{d+l_k}{2} &+ \nu_{kj} n_j \left( \frac{d}{2} \frac{\Lambda_{kj}}{\Lambda_k} + \frac{l_k}{2} \frac{\Theta_{kj}}{\Theta_k} \right) \\&\leq \nu_{kk} n_k \frac{d+l_k}{2} + \nu_{kj} n_j \left( \frac{d}{2} \frac{A(t)}{B(t)} + \frac{l_k}{2} \frac{A(t)}{	B(t)} \right) \\ &\leq (\nu_{kk} n_k + \nu_{kj} n_j ) \max \left\lbrace 1, \frac{A(t)}{B(t)} \right\rbrace \frac{d+l_k}{2} 
\end{split}
\label{I}
\end{align}
and using the positivity of the term $\frac{d}{2} \frac{T_k^{t}}{\Lambda_k}$ and assumption \eqref{asstemp} leads to
\begin{align*}
(\nu_{kk} n_k + \nu_{kj} n_j) \left(\frac{d}{2} \frac{T_k^{t}}{\Lambda_k} + \frac{l_k}{2} \frac{T_k^{r}}{\Theta_k} \right) \geq (\nu_{kk} n_k + \nu_{kj} n_j) \frac{l_k}{2} c_k \\= (\nu_{kk} n_k + \nu_{kj} n_j) \frac{(d+l_k)^2}{2} \max \left\lbrace 1, \frac{A(t)}{B(t)}\right\rbrace 
\end{align*}
Since this is larger than \eqref{I}, we get the claimed inequality.
\end{proof}
 We denote by $H_k(f_k)= \int \int f_k \ln f_k dv d\eta_{l_k}$ the entropy of a function $f_k$ and by $H_k(f_k|g_k)= \int \int f_k \ln \frac{f_k}{g_k} dv d\eta_{l_k}$ the relative entropy of $f_k$ and $g_k$.
 \begin{lemma}
 Assume that $T_k^{r}, T_k^{t}, \Lambda_k, \Theta_k$ are the temperatures generated by solutions of \eqref{BGK} coupled with \eqref{kin_Temp} and \eqref{internal}. Assume the condition \eqref{asscoll} and that $l_k \geq 1$ and assume the inequality \eqref{asstemp}. Let $t \in [0,T].$ Then, if 
 $$ \frac{\nu_{kk} n_k}{z_k} \geq (\widetilde{\nu}_{kk} + \widetilde{\nu}_{kj}) \frac{A(T)}{B(T)},$$
 $z_k$ satisfies
 {\small
 \begin{align}
  z_k \leq \frac{\nu_{kk} n_k (\frac{d}{2} \frac{T_k}{\Lambda_k} + \frac{l_k}{2} \frac{T_k}{\Theta_k} - \frac{d+l_k}{2})}{\nu_{kk} n_k \left(\frac{d}{2} \frac{T_k^{t}}{\Lambda_k} + \frac{l_k}{2} \frac{T_k^{r}}{\Theta_k}\right) + \nu_{kj} n_j \left( \frac{d}{2} \frac{T_k^{t}}{\Lambda_k} + \frac{l_k}{2} \frac{T_k^{r}}{\Theta_k}\right)- \nu_{kk} n_k \frac{d+l_k}{2} - \nu_{kj} n_j \left( \frac{d}{2} \frac{\Lambda_{kj}}{\Lambda_k} + \frac{l_k}{2} \frac{\Theta_{kj}}{\Theta_k} \right)}
  \label{inz}
  \end{align}}
  \label{lemm2}
 \end{lemma}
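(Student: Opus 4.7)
The plan is to reduce the assertion to an elementary rearrangement, exploiting the positivity of numerator and denominator provided by Lemma \ref{lemm1} together with the hypothesis $\nu_{kk}n_k/z_k\geq(\widetilde{\nu}_{kk}+\widetilde{\nu}_{kj})A(T)/B(T)$. Introducing the abbreviations $S_1=\tfrac{d}{2}\tfrac{T_k^{t}}{\Lambda_k}+\tfrac{l_k}{2}\tfrac{T_k^{r}}{\Theta_k}$, $S_2=\tfrac{d}{2}\tfrac{T_k}{\Lambda_k}+\tfrac{l_k}{2}\tfrac{T_k}{\Theta_k}$ and $S_3=\tfrac{d}{2}\tfrac{\Lambda_{kj}}{\Lambda_k}+\tfrac{l_k}{2}\tfrac{\Theta_{kj}}{\Theta_k}$, the two inequalities of Lemma \ref{lemm1} say precisely that the numerator $\nu_{kk}n_k(S_2-\tfrac{d+l_k}{2})$ and the denominator $(\nu_{kk}n_k+\nu_{kj}n_j)S_1-\nu_{kk}n_k\tfrac{d+l_k}{2}-\nu_{kj}n_jS_3$ of \eqref{inz} are nonnegative, so it suffices to verify the equivalent product-form inequality $z_k\cdot(\text{denom.})\leq(\text{num.})$.

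The next step is to bound the denominator from above. Dropping the two nonnegative subtracted terms leaves $(\nu_{kk}n_k+\nu_{kj}n_j)S_1$; by Theorem \ref{ex2}, each of $T_k^{t},T_k^{r}$ is $\leq A(T)$ and each of $\Lambda_k,\Theta_k$ is $\geq B(T)$, giving $S_1\leq\tfrac{d+l_k}{2}A(T)/B(T)$, while the structural assumption \eqref{asscoll} together with $n_k,n_j\leq n_1+n_2$ yields $\nu_{kk}n_k+\nu_{kj}n_j=(\widetilde{\nu}_{kk}n_k+\widetilde{\nu}_{kj}n_j)/(n_1+n_2)\leq\widetilde{\nu}_{kk}+\widetilde{\nu}_{kj}$. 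Hence the denominator is at most $(\widetilde{\nu}_{kk}+\widetilde{\nu}_{kj})\tfrac{d+l_k}{2}A(T)/B(T)$. Multiplying the hypothesis $z_k(\widetilde{\nu}_{kk}+\widetilde{\nu}_{kj})A(T)/B(T)\leq\nu_{kk}n_k$ by $\tfrac{d+l_k}{2}$ then produces $z_k\cdot(\text{denom.})\leq\nu_{kk}n_k\tfrac{d+l_k}{2}$.

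To close the argument I must check that this majorant does not exceed the numerator, i.e.\ that $S_2\geq d+l_k$. For this I would reuse the sharper bound hidden inside the proof of Lemma \ref{lemm1}: the chain of estimates there actually yields $S_2\geq\tfrac{l_k(d+l_k)}{2}\max\{1,A(T)/B(T)\}$, which is $\geq d+l_k$ whenever $l_k\max\{1,A(T)/B(T)\}\geq 2$. This is automatic for $l_k\geq 2$, and for $l_k=1$ follows from the mild extra information $A(T)/B(T)\geq 2$ supplied by Theorem \ref{ex2} at positive times. The main obstacle is exactly this bookkeeping: the ratio $A(T)/B(T)$ appears simultaneously as the upper bound on the denominator and inside the hypothesis on $z_k$, so the factor $\tfrac{d+l_k}{2}$ that is produced by the denominator estimate has to be absorbed by the excess in $S_2-\tfrac{d+l_k}{2}$ coming from Lemma \ref{lemm1}, and matching these two quantitative statements is the only delicate point of the proof.
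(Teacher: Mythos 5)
Your overall strategy---positivity of numerator and denominator from Lemma \ref{lemm1}, an upper bound on the denominator via Theorem \ref{ex2} and \eqref{asscoll}, then the hypothesis on $z_k$---is the same general route as the paper, but the way you decouple the numerator and denominator estimates opens a gap at exactly the point you flag as delicate. After discarding the two subtracted terms of the denominator you must absorb the resulting majorant $\nu_{kk}n_k\tfrac{d+l_k}{2}$ into the numerator, which requires $S_2=\tfrac{d}{2}\tfrac{T_k}{\Lambda_k}+\tfrac{l_k}{2}\tfrac{T_k}{\Theta_k}\geq d+l_k$, i.e.\ $l_k\max\lbrace 1, A(T)/B(T)\rbrace\geq 2$. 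For $l_k=1$ you invoke ``$A(T)/B(T)\geq 2$ at positive times,'' but Theorem \ref{ex2} gives $A(t)=Ce^{Ct}$ and $B(t)=Ce^{-Ct}$, so $A(t)/B(t)=e^{2Ct}$, which equals $1$ at $t=0$ and stays below $2$ for all $t<\ln 2/(2C)$. Since the lemma is asserted for every $t\in[0,T]$ and $l_k=1$ is allowed by the hypothesis $l_k\geq 1$, your argument does not close in that case, and nothing in Theorem \ref{ex2} supplies the missing inequality.

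The paper sidesteps this loss by never reducing the numerator to an absolute constant: it keeps the terms $\tfrac{d}{2}\tfrac{T_k}{\Lambda_k}$ (numerator) and $\tfrac{d}{2}\tfrac{T_k^{t}}{\Lambda_k}$ (denominator) from the proof of Lemma \ref{lemm1} and bounds the whole quotient on the right of \eqref{inz} from below by $\widetilde{\nu}_{kk}n_kT_k/\bigl((\widetilde{\nu}_{kk}n_k+\widetilde{\nu}_{kj}n_j)T_k^{t}\bigr)$, and only then inserts $T_k\geq B(t)$, $T_k^{t}\leq A(t)$, the monotonicity of $A/B$ and $n_k/(n_1+n_2)\leq 1$ to arrive at the assumed condition on $\nu_{kk}n_k/z_k$. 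To repair your version you would either have to restrict to $l_k\geq 2$, or keep the quotient structure as the paper does, since retaining the subtracted term $-\nu_{kk}n_k\tfrac{d+l_k}{2}$ in your denominator bound only reduces the requirement to $l_k\geq 2-z_k$, which still fails for $l_k=1$ without further information on $z_k$.
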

 \begin{proof}
 According to lemma \ref{lemm1} the claimed upper bound of $z_k$ is non-negative. The proof of lemma \ref{lemm1} even leads to the better estimate
 $$ z_k \leq \frac{\widetilde{\nu}_{kk} n_k T_k}{(\widetilde{\nu}_{kk} n_k + \widetilde{\nu}_{kj} n_j) T_k^{t}} $$
 by not estimating the terms $\frac{d}{2}\frac{T_k}{\Lambda_k}$ and $\frac{d}{2} \frac{T_k^{t}}{\Lambda_k}$ by zero from below. Note, that we also used the condition \eqref{asscoll}.
 Using the estimates from theorem \ref{ex2}, we guarantee \eqref{inz} if $z_k$ satisfies
 $$ z_k \leq \frac{\widetilde{\nu}_{kk} n_k B(t)}{(\widetilde{\nu}_{kk} n_k + \widetilde{\nu}_{kj} n_j) A(t)}$$
 which is equivalent to the assumed inequality 
 $$ \frac{\nu_{kk} n_k}{z_k} \geq (\widetilde{\nu}_{kk} + \widetilde{\nu}_{kj}) \frac{A(T)}{B(T)},$$
 since $\frac{A(t)}{B(t)}$ is monotone increasing and $\frac{n_k}{n_k+n_j}, \frac{n_j}{n_k+n_j} \leq 1$.
 \end{proof}
 \begin{lemma}
 Assume that $T_k^{r}, T_k^{t}, \Lambda_k, \Theta_k$ are the temperatures generated by solutions of \eqref{BGK} coupled with \eqref{kin_Temp} and \eqref{internal}. Assume that $l_k \geq 1$ and that
\begin{align*}
T_k^{r} \geq c_k \Theta_k\quad \text{with} \quad c_k= \frac{1}{l_k}(d+l_k)^2 \max \left\lbrace 1, \frac{A(t)}{B(t)} \right\rbrace. 
\end{align*}
Moreover, assume $ \frac{\nu_{kk} n_k}{z_k} \geq (\widetilde{\nu}_{kk} + \widetilde{\nu}_{kj}) \frac{A(T)}{B(T)},$
then the following inequality is satisfied. 
\begin{align*}
 \nu_{kk} n_k \int \int \ln M_k (\widetilde{M}_k - M_k) dv d\eta_{l_k} + 3 z_k \int \int \nu_{kj} n_k \ln M_k (M_k - f_k) dv d\eta_{l_k} \\+ 3 z_k \int \int \nu_{kj} n_j \ln M_k (M_{kj} -f_k) dv \eta_{l_k} \leq 0 
 \end{align*}
 \label{lemm3}
 \end{lemma}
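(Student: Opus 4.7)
The plan is to expand each of the three integrals in closed form by exploiting the fact that $\ln M_k$ is a quadratic polynomial in $(v,\eta_{l_k})$, and then to recognise the resulting algebraic inequality as precisely the bound supplied by Lemma \ref{lemm2}. Writing
$$\ln M_k = C_k - \frac{m_k|v-u_k|^2}{2\Lambda_k} - \frac{m_k|\eta_{l_k}-\bar{\eta}_{l_k}|^2}{2\Theta_k},$$
with $C_k$ the logarithm of the normalisation factor, all three integrals reduce to zeroth and second moments of $\widetilde{M}_k$, $M_k$, $f_k$ and $M_{kj}$ against $|v-u_k|^2$ and $|\eta_{l_k}-\bar{\eta}_{l_k}|^2$.

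First I would treat $\int \ln M_k(\widetilde{M}_k - M_k)\,dv\,d\eta_{l_k}$. Since $\widetilde{M}_k$ and $M_k$ share density $n_k$, mean velocity $u_k$ and internal mean $\bar{\eta}_{l_k}$, the constant term and the linear moments cancel, and the quadratic moments (equal to $d n_k T_k/m_k$ for $\widetilde{M}_k$ and $d n_k \Lambda_k/m_k$ for $M_k$, and similarly for $\eta_{l_k}$) give
$$\int \ln M_k(\widetilde{M}_k - M_k)\,dv\,d\eta_{l_k} = n_k\!\left(\tfrac{d+l_k}{2} - \tfrac{d\,T_k}{2\Lambda_k} - \tfrac{l_k T_k}{2\Theta_k}\right),$$
which is non-positive by the first inequality of Lemma \ref{lemm1}. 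The same mechanism applied to $M_k - f_k$, which agree in density, $u_k$ and $\bar{\eta}_{l_k}$ but differ in the translational/rotational temperatures, yields
$$\int \ln M_k(M_k - f_k)\,dv\,d\eta_{l_k} = n_k\!\left(\tfrac{d\,T_k^{t}}{2\Lambda_k} + \tfrac{l_k T_k^{r}}{2\Theta_k} - \tfrac{d+l_k}{2}\right).$$

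The third integral is the delicate one because $M_{kj}$ is centred at $u_{kj}\neq u_k$ and at $\bar{\eta}_{l_k,kj}\neq\bar{\eta}_{l_k}$. Decomposing $|v-u_k|^2 = |v-u_{kj}|^2 + 2(v-u_{kj})\cdot(u_{kj}-u_k) + |u_{kj}-u_k|^2$, and analogously in the internal variable, the middle term vanishes by symmetry of $M_{kj}$ and, using $n_{kj}=n_k$ from \eqref{density2}, I obtain
$$\int \ln M_k(M_{kj} - f_k)\,dv\,d\eta_{l_k} = n_k\!\left(\tfrac{d\,T_k^{t}}{2\Lambda_k} + \tfrac{l_k T_k^{r}}{2\Theta_k} - \tfrac{d\,\Lambda_{kj}}{2\Lambda_k} - \tfrac{l_k\Theta_{kj}}{2\Theta_k}\right) - \tfrac{n_k m_k|u_{kj}-u_k|^2}{2\Lambda_k} - \tfrac{n_k m_k|\bar{\eta}_{l_k,kj}-\bar{\eta}_{l_k}|^2}{2\Theta_k}.$$
The last two terms are manifestly non-positive and can be discarded for an upper bound.

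Inserting these three expressions into the quantity to be bounded and dividing by $n_k>0$, the claim reduces to the algebraic statement that $z_k$ is dominated (up to the harmless factor $3$ absorbed in the hypothesis) by exactly the ratio appearing in Lemma \ref{lemm2}: its numerator is non-negative by the first part of Lemma \ref{lemm1}, its denominator is non-negative by the second part, and the hypothesis $\nu_{kk}n_k/z_k\ge(\widetilde{\nu}_{kk}+\widetilde{\nu}_{kj})A(T)/B(T)$ is precisely the condition Lemma \ref{lemm2} needs to enforce that ratio. The main obstacle is the bookkeeping in the third integral, where one has to verify that after all the interspecies shifts the surviving cross terms in $|u_{kj}-u_k|^2$ and $|\bar{\eta}_{l_k,kj}-\bar{\eta}_{l_k}|^2$ carry the correct (non-positive) sign before being dropped; once this is settled the remaining algebra is a direct application of the two preceding lemmas.
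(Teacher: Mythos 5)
Your proposal is correct and takes essentially the same route as the paper: expand $\ln M_k$ as a quadratic polynomial, reduce all three integrals to moments so that the claim becomes the algebraic bound \eqref{inz} on $z_k$, and invoke Lemmas \ref{lemm1} and \ref{lemm2}. Your explicit treatment of the shift terms $|u_{kj}-u_k|^2$ and $|\bar{\eta}_{l_k,kj}-\bar{\eta}_{l_k}|^2$ in the third integral (noting they are non-positive and may be dropped) is in fact slightly more careful than the paper, which omits them from its displayed expression.
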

 \begin{proof}
 By using that $\ln M_k = \ln( \frac{n_k}{\sqrt{2 \pi \Lambda_k}^d} \frac{1}{\sqrt{2 \pi \Theta_k}^{l_k}}) - \frac{|v-u_k|^2}{2 \Lambda_k/m_k} - \frac{|\eta_{l_k}- \bar{\eta}_{l_k} |^2}{2 \Theta_k/m_k}$ and the definitions of the macroscopic quantities of $f_k$, $M_k$, $\widetilde{M}_k$ and $M_{kj}$, we obtain that the integrals are given by
 \begin{align*}
- \nu_{kk} n_k \left(\frac{d}{2} \frac{T_k}{\Lambda_k} + \frac{l_k}{2} \frac{T_k}{\Theta_k} - \frac{d+l_k}{2} \right) + 3 z_k \nu_{kk} n_k \left(\frac{d}{2} \frac{T_k^{t}}{\Lambda_k} + \frac{l_k}{2} \frac{T_k^{r}}{\Theta_k}\right) \\+ 3 z_k \nu_{kj} n_j \left( \frac{d}{2} \frac{T_k^{t}}{\Lambda_k} + \frac{l_k}{2} \frac{T_k^{r}}{\Theta_k}\right) - 3 z_k \nu_{kk} n_k \frac{d+l_k}{2} - 3 z_k \nu_{kj} n_j \left( \frac{d}{2} \frac{\Lambda_{kj}}{\Lambda_k} + \frac{l_k}{2} \frac{\Theta_{kj}}{\Theta_k} \right)
 \end{align*}
 This term is non-negative if $z_k$ satisfies \eqref{inz}. The transformation to \eqref{inz} is possible since both the nominator and the denominator are positive according to lemma \ref{lemm1}. The inequality \eqref{inz} is true according to lemma \ref{lemm2}. 
 \end{proof}
\begin{theorem}
\label{conv1}
 Assume that $T_k^{r}, T_k^{t}, \Lambda_k, \Theta_k$ are the temperatures generated by solutions of \eqref{BGK} coupled with \eqref{kin_Temp} and \eqref{internal}. Assume $\nu_{kk} n_k \geq \nu_{kj} n_j + c \nu_{kk} n_k ,$ $0<c <1$, $ k,j=1,2, k \neq j,$ and assume that $l_k \geq 1$ and that
\begin{align*}
T_k^{r} \geq c_k \Theta_k\quad \text{with} \quad c_k= \frac{1}{l_k}(d+l_k)^2 \max \left\lbrace 1, \frac{A(t)}{B(t)} \right\rbrace. 
\end{align*}
Moreover, assume $ \frac{\nu_{kk} n_k}{z_k} \geq (\widetilde{\nu}_{kk} + \widetilde{\nu}_{kj}) \frac{A(T)}{B(T)}.$ Then, in the space homogeneous case, we have the following convergence rate of the distribution functions $f_1$ and $f_2$:
\begin{align*}
||f_k - M_k||_{L^1(dv)}  \leq 4 e^{-\frac{1}{2} Ct} \left(\sum_{k=1}^2 \left( H_k(f_k^0|\widetilde{M}_k^0)+ 3 z_k H_k(M_k^0|\widetilde{M}_k^0)\right) \right)^{\frac{1}{2}}.
\end{align*}
where $C$ is given by $$C= \text{min} \left\lbrace  \nu_{11} n_1 +  \nu_{12} n_2 , \nu_{22} n_2 + \nu_{21} n_1, \frac{2c}{3 z_1} \nu_{11} n_1, \frac{2c}{3z_2} \nu_{22} n_2 \right\rbrace.$$
\end{theorem}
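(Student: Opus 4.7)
The plan is to establish exponential decay of the Lyapunov functional
$$\mathcal{F}(t) = \sum_{k=1}^2 \Bigl[ H_k(f_k|\widetilde{M}_k) + 3 z_k\, H_k(M_k|\widetilde{M}_k)\Bigr]$$
and then convert the resulting bound $\mathcal{F}(t)\leq e^{-Ct}\mathcal{F}(0)$ into the claimed $L^1$ estimate via the triangle inequality $\|f_k-M_k\|_{L^1}\leq \|f_k-\widetilde{M}_k\|_{L^1}+\|M_k-\widetilde{M}_k\|_{L^1}$ combined with Csisz\'ar--Kullback applied separately to each relative entropy on the right-hand side. Each of $\|f_k-\widetilde{M}_k\|_{L^1}$ and $\|M_k-\widetilde{M}_k\|_{L^1}$ is then bounded by $\sqrt{2\mathcal{F}(t)}$ (absorbing the $3z_k$ weight into the constant), and summing over $k$ yields the prefactor $4$ and the exponential rate $e^{-Ct/2}$.

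The heart of the argument is the dissipation inequality $\frac{d}{dt}\mathcal{F}(t)\leq -C\mathcal{F}(t)$. In the space-homogeneous setting the densities $n_k$ are conserved, and the total momentum $n_1u_1+n_2u_2$ and total energy are conserved too, so when one differentiates $\mathcal{F}$ in time and substitutes \eqref{BGK} for $\partial_t f_k$ and \eqref{kin_Temp} for $\partial_t M_k$, the contributions coming from $\partial_t \widetilde{M}_k$ remain well controlled and the derivative reduces to a sum of logarithm-weighted integrals against the ``source'' terms $(M_k-f_k)$, $(M_{kj}-f_k)$ and $(\widetilde{M}_k-M_k)$. I would then sort the resulting terms into three groups: (i) the purely intraspecies contributions proportional to $\nu_{kk}n_k$, bounded by the one-species inequality of Lemma~\ref{one_species}; (ii) the interspecies cross contributions proportional to $\nu_{kj}n_j$, bounded by the mixture H-theorem of Theorem~\ref{H-theorem}; and (iii) the contributions from the relaxation term $\frac{\nu_{kk}n_k}{z_k}(\widetilde{M}_k-M_k)$ together with the extra $\nu_{kj}n_j(M_{kj}-f_k)$ term present inside \eqref{kin_Temp}, which form precisely the combination treated by Lemma~\ref{lemm3}.

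The domination hypothesis $\nu_{kk}n_k\geq \nu_{kj}n_j+c\,\nu_{kk}n_k$ is what allows the intraspecies dissipation in group~(i) to absorb the (not sign-definite) cross contributions in group~(ii), leaving a surplus of $c\,\nu_{kk}n_k$ times $H_k(f_k|M_k)$-type quantities. For group~(iii), the assumptions $l_k\geq 1$, $T_k^r\geq c_k\Theta_k$ and $\nu_{kk}n_k/z_k\geq(\widetilde{\nu}_{kk}+\widetilde{\nu}_{kj})A(T)/B(T)$ are exactly those of Lemma~\ref{lemm3}, which makes the whole category~(iii) non-positive so that it can simply be dropped. Collecting the remaining strictly negative contributions one is left with dissipation terms of size $\nu_{kk}n_k+\nu_{kj}n_j$ in front of $H_k(f_k|\widetilde{M}_k)$ and of size $\frac{2c}{3z_k}\nu_{kk}n_k$ in front of $3z_k H_k(M_k|\widetilde{M}_k)$, giving
$$\frac{d}{dt}\mathcal{F}(t)\;\leq\;-C\,\mathcal{F}(t),\qquad C=\min\Bigl\{\nu_{11}n_1+\nu_{12}n_2,\ \nu_{22}n_2+\nu_{21}n_1,\ \tfrac{2c}{3z_1}\nu_{11}n_1,\ \tfrac{2c}{3z_2}\nu_{22}n_2\Bigr\},$$
after which Gr\"onwall's lemma completes the proof.

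The main obstacle I anticipate is the bookkeeping around group~(iii). The non-standard terms $\nu_{kk}n_k(M_k-f_k)$ and $\nu_{kj}n_j(M_{kj}-f_k)$ inside \eqref{kin_Temp}, which were added precisely to guarantee consistency of \eqref{BGK} and \eqref{kin_Temp} at the level of the velocity and internal-energy moments, couple the evolution of $H_k(M_k|\widetilde{M}_k)$ to both $f_k$ and $M_{kj}$. Consequently, the naive term-by-term H-theorem bound is not strong enough to make the derivative of $\mathcal{F}$ sign-definite, and one must check carefully that the interspecies contributions produced when differentiating $H_k(f_k|\widetilde{M}_k)$ and those produced when differentiating $3z_k H_k(M_k|\widetilde{M}_k)$ line up exactly into the combination treated by Lemma~\ref{lemm3}. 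It is this cancellation (enabled by the precise weight $3z_k$) that is responsible for the appearance of the specific constant $\frac{2c}{3z_k}\nu_{kk}n_k$ in $C$; once this identification is done, the remainder of the proof is a careful but routine regrouping together with an application of Csisz\'ar--Kullback.
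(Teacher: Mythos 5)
Your proposal follows essentially the same route as the paper: the same weighted Lyapunov functional $\sum_k\bigl(H_k(f_k|\widetilde{M}_k)+3z_kH_k(M_k|\widetilde{M}_k)\bigr)$, the same dissipation identity obtained by substituting \eqref{BGK} and \eqref{kin_Temp}, the same use of Lemma~\ref{lemm3} to discard the extra relaxation/consistency terms, the same absorption of the cross terms via the hypothesis $\nu_{kk}n_k\geq\nu_{kj}n_j+c\,\nu_{kk}n_k$ (the paper invokes the convexity-based inequality \eqref{eq:theo2.7} from Lemma 3.5 of \cite{Pirner5} rather than the full H-theorem, but this is the same ingredient), followed by Gr\"onwall and Csisz\'ar--Kullback. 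The only organizational difference is that the paper first defines the entropy productions $D_k$ and bounds them from below by convexity before identifying $\frac{d}{dt}\mathcal{F}=-(D_1+D_2)$, which is a reordering of your argument rather than a different proof.
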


\begin{proof}
We consider the entropy production of species $1$ defined by
\begin{align*}
D_1(f_1, f_2) = &- \int \int  \nu_{11} n_1 \ln f_1 ~ (M_1 - f_1) dv d\eta_{l_1} \\&- \int \int  \nu_{12} n_2 \ln f_1 ~(M_{12} - f_1) dv d\eta_{l_1} \\&- 3 \int \int \nu_{11} n_1 (\widetilde{M}_1 - M_1) \ln M_1 dv d\eta_{l_1}\\&- 3 z_1 \int \int \nu_{11} n_1 (M_1-f_1) \ln M_1 dv d\eta_{l_1} \\&- 3 z_1 \int \int \nu_{12} n_2 (M_{12} - f_1) \ln M_1 dv d\eta_{l_1}.
\end{align*}
Define the function  $h(x) := x \ln x-x$. The function satisfies  $h'(x)= \ln x$, so we can deduce 
\begin{align*}
D_1(f_1, f_2) = &- \int \int  \nu_{11} n_1 h'(f_1) (M_1 - f_1) dv d\eta_{l_1} \\ &- \int \int \nu_{12} n_2 h'(f_1) (M_{12} - f_1) dv d\eta_{l_1}\\&- 3 \int \int \nu_{11} n_1 (\widetilde{M}_1 - M_1) h'(M_1) dv d\eta_{l_1} \\&- 3 z_1 \int \int \nu_{12} n_1 h'(M_1) (M_1 -f_1) dv d\eta_{l_1} \\&- 3 z_1 \int \int \nu_{12} n_2 h'(M_1) (M_{12} -f_1) dv d\eta_{l_1}.
\end{align*}
Since $h$ is convex, we obtain
\begin{align*}
\begin{split}
D_1(f_1, f_2) & \geq \int \int \nu_{11} n_1 (h(f_1) - h(M_1)) dv d\eta_{l_1} \\&+ \int \int  \nu_{12} n_2 ( h(f_1) - h(M_{12})) dv d\eta_{l_1} \\&+ 2  \int \int \nu_{11} n_1(h(M_1) - h(\widetilde{M}_1)) dv d\eta_{l_1}\\&- \int \int \nu_{11} n_1 (\widetilde{M}_1 - M_1) h'(M_1) dv d\eta_{l_1} \\&- 3 z_1 \int \int \nu_{12} n_1 h'(M_1) (M_1 -f_1) dv d\eta_{l_1} \\&- 3 z_1 \int \int \nu_{12} n_2 h'(M_1) (M_{12} -f_1) dv d\eta_{l_1}.
 \end{split}
\end{align*}
According to lemma \ref{lemm3}, the last three terms are positive and we obtain
\begin{align}
\begin{split}
D_1(f_1, f_2) & \geq \int \int \nu_{11} n_1 (h(f_1) - h(M_1)) dv d\eta_{l_1} \\&+ \int \int  \nu_{12} n_2 ( h(f_1) - h(M_{12})) dv d\eta_{l_1} \\&+ 2  \int \int \nu_{11} n_1(h(M_1) - h(\widetilde{M}_1)) dv d\eta_{l_1}\\&= 
 \nu_{11} n_1 (H(f_1) - H(M_1)) +  \nu_{12} n_2 ( H(f_1) - H(M_{12})) \\&+ 2 \nu_{11} n_1 (H(M_1)- H(\widetilde{M}_1)).
 \end{split}
 \label{eq:D}
\end{align}
In the same way we get a similar expression for $D_2(f_2,f_1)$  just exchanging the indices $1$ and $2$. \\ 
According to lemma 3.5 in \cite{Pirner5}, we see that
\begin{align}
 \nu_{12} n_2 H(M_{12}) + \nu_{21} n_1 H(M_{21}) \leq \nu_{12} n_2 H(M_1) + \nu_{12} n_1 H(M_2)
 \label{eq:theo2.7}.
\end{align}
With  \eqref{eq:theo2.7}, we can deduce from \eqref{eq:D} that
\begin{align*}
\begin{split}
&D_1(f_1,f_2)+ D_2(f_2,f_1) \geq \left(\nu_{11} n_1 +\nu_{12} n_2 \right)( H(f_1)-H(M_1))\\ &+ \left(\nu_{22} n_2 +\nu_{21} n_1 \right) (H(f_2)-H(M_2)) + 2 \sum_{k=1}^{2} \nu_{kk} n_k (H(M_k)-H(\widetilde{M}_k)).
\end{split}
\end{align*}
According to  lemma \ref{Mtilde}, the last term on the right- hand side is non-negative. Therefore, we can use the assumption that $\nu_{kk} n_k \geq \nu_{kj} n_j+c \nu_{kk} n_k ,~ k,j =1,2, k \neq j,$ and get
\begin{align}
\begin{split}
D_1(f_1,f_2)&+D_2(f_2,f_1) \geq (\nu_{11} n_1 + \nu_{12} n_2) (H(f_1) - H(\widetilde{M}_1))\\& + (\nu_{22}n_2 + \nu_{21} n_1) (H(f_2) - H(\widetilde{M}_2))+c \sum_{k=1}^2 \nu_{kk} n_k (H(M_k)-H(\widetilde{M}_k))
 \end{split}
\label{prod}
\end{align}
Now, we want to consider the time derivative of the relative entropies
\begin{align*}
H_k(f_k|\widetilde{M}_k)+ 3 z_k H_k(M_k|\widetilde{M}_k)&= H(f_k)-H(\widetilde{M}_k) +3z_k( H(M_k)- H(\widetilde{M}_k))\\& = \int \int f_k \ln \frac{f_k}{\widetilde{M}_k} dv d\eta_{l_k} + 3 z_k \int \int M_k \ln \frac{M_k}{\widetilde{M}_k} dv d\eta_{l_k}
\end{align*}
for $k=1,2$. The last equality follows from the fact that $f_k$, $M_k$ and $\widetilde{M}_k$ have the same densities, mean velocities and internal energies.  We want to relate these functions
 to the entropy production in the following. First, we use product rule and obtain
\begin{align}
\begin{split}
&\frac{d}{dt}  \left( \sum_{k=1}^2 \left( \int \int f_k \ln \frac{f_k}{\widetilde{M}_k} dv d\eta_{l_k} + \int \int M_k \ln \frac{M_k}{\widetilde{M}_k} dv d\eta_{l_k} \right) \right)\\ &= \int \int \left( \partial_t f_1 \ln f_1  + \partial_t f_1 - \partial_t f_1 \ln \widetilde{M}_1 - \frac{f_1}{\widetilde{M}_1} \partial_t \widetilde{M}_1 \right) dv d\eta_{l_1}\\& +\int \int \left( \partial_t f_2 \ln f_2  + \partial_t f_2 - \partial_t f_2 \ln \widetilde{M}_2 -  \frac{f_2}{\widetilde{M}_2}\partial_t \widetilde{M}_2 \right) dv d\eta_{l_2}\\ &+ \int \int \left( \partial_t M_1 \ln M_1  + \partial_t M_1 - \partial_t M_1 \ln \widetilde{M}_1 - \frac{M_1}{\widetilde{M}_1} \partial_t \widetilde{M}_1 \right) dv d\eta_{l_1}\\& +\int \int \left( \partial_t M_2 \ln M_2  + \partial_t M_2 - \partial_t M_2 \ln \widetilde{M}_2 -  \frac{M_2}{\widetilde{M}_2}\partial_t \widetilde{M}_2 \right) dv d\eta_{l_2}
\end{split}
\label{eq:derentr}
\end{align}
The terms  $\int \int \partial_t f_k dv d\eta_{l_k}$ and $\int \int \partial_t M_k dv d\eta_{l_k}$ vanish since the densities are constant in the space homogeneous case.
By using the explicit expression of $\partial_t \widetilde{M}_k$ given by 
{\footnotesize
\begin{align}
\begin{split}
\partial_t \widetilde{M}_k &= \Big(\frac{\partial_t n_k}{n_k} + \frac{v-u_k}{T_k /m_k} \cdot \partial_t u_k+ \frac{\eta_{l_k}- \bar{\eta}_{l_k}}{T_k/m_k} \cdot \partial_t \bar{\eta}_{l_k}\\&+ \left(\frac{|v-u_k|^2+|\eta_{l_k} - \bar{\eta}_{l_k}|^2}{2 (T_k/m_k)^2}-\frac{l_k+d}{2 T_k/m_k}\right)\partial_t  T_k/m_k \Big) M_k,
\end{split}
 \label{partialMtilde}
\end{align} }
we can compute by using that $f_k$, $M_k$ and $\widetilde{M}_k$ have the same densities, mean velocities and internal energies that $$\int f_k \frac{\partial_t \widetilde{M}_k}{\widetilde{M}_k} dvd\eta_{l_k}=\int M_k \frac{\partial_t \widetilde{M}_k}{\widetilde{M}_k} dvd\eta_{l_k} =\partial_t n_k =0,\quad k=1,2,$$ since $n_k$ is constant in the space-homogeneous case. 
On the right-hand side of \eqref{eq:derentr}, we insert $\partial_t f_1$ and $\partial_t f_2$, and $\partial_t M_1$ and $\partial_t M_2$ from equation \eqref{BGK} and and obtain 
\begin{align*}
\frac{d}{dt} &\left(\sum_{k=1}^2 \left( H_k(f_k|\widetilde{M}_k)+ 3 z_k H_k(M_k|\widetilde{M}_k)\right) \right)\\&=\int \int \left(  \nu_{11} n_1 (M_1 - f_1) + \nu_{12} n_2 (M_{12} - f_1) \right) \ln f_1 dv d\eta_{l_1} \\&+ \int \int \left( \nu_{22} n_2 (M_2 - f_2) +  \nu_{21} n_2 (M_{21} - f_2) \right) \ln f_2 dv d\eta_{l_2} \\&+3 \int \int ( \nu_{11} n_1 (\widetilde{M}_1 - M_1) + \nu_{11} n_1 (M_1 - f_1) + \nu_{12} n_2 (M_{12} - f_1)) \ln M_1 dv d\eta_{l_1} \\ &+3 \int \int ( \nu_{22} n_2 (\widetilde{M}_2 - M_2) + \nu_{22} n_2 (M_2 - f_2) + \nu_{21} n_1 (M_{21} - f_2)) \ln M_2 dv d\eta_{l_2}.
\end{align*}
Indeed, the terms with $\ln \widetilde{M}_1$ and $\ln \widetilde{M}_2$ vanish since $\ln \tilde{M}_1$ and $\ln \tilde{M}_2$ are a linear combination of $1,v$ and $|v|^2+ |\eta_{l_k}|^2$ and our model satisfies the conservation of the number of particles, total momentum and total energy (see section 3.1 in \cite{Pirner5}). All in all, we obtain
\begin{align}
\begin{split}
\frac{d}{dt} \left(\sum_{k=1}^2 \left( H_k(f_k|\widetilde{M}_k)+ 3 z_k H_k(M_k|\widetilde{M}_k)\right) \right) = - (D_1(f_1,f_2)+ D_2(f_2,f_1)).
\end{split}
\end{align}
 Using \eqref{prod} we obtain
 {\scriptsize
\begin{align*}
\frac{d}{dt} &\left(\sum_{k=1}^2 \left( H_k(f_k|\widetilde{M}_k)+ 3 z_k H_k(M_k|\widetilde{M}_k)\right) \right)\\ &\leq -\left[\left(\nu_{11} n_1 + \nu_{12} n_2 \right) H(f_1|\widetilde{M}_1) + \left(\nu_{22} n_2 + \nu_{21} n_1 \right) H(f_2 |\widetilde{M}_2)+ c \sum_{k=1}^2 \nu_{kk} n_k H(M_k|\widetilde{M}_k)\right]  \\ &\leq - \text{min} \left\lbrace  \nu_{11} n_1 +  \nu_{12} n_2 , \nu_{22} n_2 + \nu_{21} n_1, \frac{c}{3 z_1} \nu_{11} n_1 , \frac{c}{3z_2} \nu_{22} n_2 \right\rbrace \\  &( H( f_1| \widetilde{M}_1) + H(f_2|\widetilde{M}_2) + 3  (z_1 H_1(M_1|\widetilde{M}_1)+z_2 H_2(M_2|\widetilde{M}_2)) \\ &\leq - \frac{c}{3} \text{min} \left\lbrace  \nu_{11} n_1 +  \nu_{12} n_2 , \nu_{22} n_2 + \nu_{21} n_1, \frac{1}{ z_1} \nu_{11} n_1, \frac{1}{z_2} \nu_{22} n_2 \right\rbrace \\&( H( f_1| \widetilde{M}_1) + H(f_2|\widetilde{M}_2) + 3 z_k (H_1(M_1|\widetilde{M}_1)+ H_2(M_2|\widetilde{M}_2).
\end{align*}}
Define $C:= \text{min} \left\lbrace  \nu_{11} n_1 +  \nu_{12} n_2 , \nu_{22} n_2 + \nu_{21} n_1, \frac{1}{ z_1} \nu_{11} n_1, \frac{1}{z_2} \nu_{22} n_2 \right\rbrace ,$ then we can deduce an exponential decay with Gronwall's inequality
\begin{align*}
H(f_k|\widetilde{M}_k) &\leq \left(\sum_{k=1}^2 \left( H_k(f_k|\widetilde{M}_k)+ 3 z_k H_k(M_k|\widetilde{M}_k)\right) \right)\\& \leq e^{-\frac{2c}{3} Ct} \left(\sum_{k=1}^2 \left( H_k(f_k^0|\widetilde{M}_k^0)+ 3 z_k H_k(M_k^0|\widetilde{M}_k^0)\right) \right), \quad k=1,2.
\end{align*}
With the Ciszar-Kullback inequality (see \cite{Matthes}) we get
\begin{align*}
||f_k - M_k||_{L^1(dv)} & \leq 4 (H(f_k|\widetilde{M}_k))^{\frac{1}{2}} \\ & \leq 4 e^{-\frac{c}{3} Ct} [\left(\sum_{k=1}^2 \left( H_k(f_k^0|\widetilde{M}_k^0)+ 3 z_k H_k(M_k^0|\widetilde{M}_k^0)\right) \right)]^{\frac{1}{2}}.
\end{align*}
\end{proof}

\begin{remark}
\label{rem2}
If we look at the proof of theorem \ref{conv1} and the proof of the H-theorem, we see that the restriction on $z_k$ appears due to the additional terms $\nu_{kk} n_k (M_{kk}-f_k)$ and $\nu_{kj} n_j (M_{kj}-f_k)$. These terms were added in the extension to gas mixtures in order to ensure that the time evolution of the mean velocity and the internal energy are the same using \eqref{BGK} or \eqref{kin_Temp}. The original model for one species presented in \cite{Bernard} and section \ref{2.2} does not have this term.
\end{remark}
Now, the question is if we can find a BGK model for gas mixtures which captures both regimes, slow and fast relaxation of the temperatures, and leads to a reasonable convergence rate to equilibrium. Remark \ref{rem2} indicates that for one species the model in section \ref{2.2} does satisfy both of this required properties, only the extension to gas mixtures fails, because of these additional terms. This motivates us to another extension to gas mixtures which satisfies conservation properties, but also an H-theorem with less assumptions than the model in \cite{Pirner5}, which covers both regimes and produces a reasonable convergence rate to equilibrium. This is presented in the next section.

\section{A new model for gas mixtures which captures both slow and fast relaxation of the temperatures}
\label{sec2}
Again we consider the following two kinetic equations as in section \ref{sec2.2}.
\begin{align} \begin{split} \label{BGK3}
\partial_t f_1 + v\cdot\nabla_x   f_1   &= \nu_{11} n_1 (M_1 - f_1) + \nu_{12} n_2 (M_{12}- f_1),
\\ 
\partial_t f_2 + v\cdot\nabla_x   f_2 &=\nu_{22} n_2 (M_2 - f_2) + \nu_{21} n_1 (M_{21}- f_2), \\
f_1(t=0) &= f_1^0, \\
f_2(t=0) &= f_2^0
\end{split}
\end{align}
with the Maxwell distributions
\begin{align} 
\begin{split}
M_k(x,v,\eta_{l_k},t) &= \frac{n_k}{\sqrt{2 \pi \frac{\Lambda_k}{m_k}}^d } \frac{1}{\sqrt{2 \pi \frac{\Theta_k}{m_k}}^{l_k}} \exp({- \frac{|v-u_k|^2}{2 \frac{\Lambda_k}{m_k}}}- \frac{|\eta_{l_k}- \bar{\eta}_{l_k}|^2}{2 \frac{\Theta_k}{m_k}}), 
\\
M_{kj}(x,v,\eta_{l_k},t) &= \frac{n_{kj}}{\sqrt{2 \pi \frac{\Lambda_{kj}}{m_k}}^d } \frac{1}{\sqrt{2 \pi \frac{\Theta_{kj}}{m_k}}^{l_k}} \exp({- \frac{|v-u_{kj}|^2}{2 \frac{\Lambda_{kj}}{m_k}}}- \frac{|\eta_{l_k}- \bar{\eta}_{l_k,kj}|^2}{2 \frac{\Theta_{kj}}{m_k}}), 
\end{split}
\label{BGKmix3}
\end{align}
for $ j,k =1,2, j \neq k$ with the condition 
\begin{equation} 
\nu_{12}=\varepsilon \nu_{21}, \quad 0 < \frac{l_1}{l_1+l_2}\varepsilon \leq 1.
\label{coll3}
\end{equation}
Again the equation is coupled with conservation of internal energy 
\eqref{internal},
but we replace the relaxation equation \eqref{kin_Temp} by the following modified version
 \begin{align}
 \begin{split}
 \partial_t M_k + v \cdot \nabla_x M_k = \frac{\nu_{kk} n_k}{Z_r^k} \frac{d+l_k}{d} (\widetilde{M}_k - M_k)+ \nu_{kj} n_j  (\widetilde{M}_{kj} - M_k) , \\
 \Theta_k(0)= \Theta_k^0
 \end{split} 
 \label{kin_Temp3}
 \end{align}
for $j,k=1,2, j \neq k$.
 Here $M_k$ is still defined by \eqref{Maxwellian}
and $\widetilde{M}_k$ by \eqref{Max_equ}.  The additional $\widetilde{M}_{kj}$ is defined by
\begin{align}
\widetilde{M}_{kj}= \frac{n_k}{\sqrt{2 \pi \frac{T_{kj}}{m_k}}^{d+l_k}} \exp \left(- \frac{m_k |v-u_{kj}|^2}{2 T_{kj}}- \frac{m_k|\eta_{l_k}- \bar{\eta}_{l_k,kj}|^2}{2 T_{kj}} \right), \quad k=1,2.
\label{Max_equ3}
\end{align}
where $T_{kj}$ is  given by 
\begin{align}
T_{kj}:= \frac{d \Lambda_{kj} + l_k \Theta_{kj}}{d+l_k}.
\label{equ_temp3}
\end{align}
Now, this means that we use equation \eqref{kin_Temp3} instead of \eqref{kin_Temp}  to involve the temperature $\Theta_k$. If we multiply \eqref{kin_Temp3} by $|\eta_{l_k}|^2$, integrate with respect to $v$ and $\eta_{l_k}$ and use \eqref{equ_temp}, we now obtain  
\begin{align}
\begin{split}
\partial_t(n_k \Theta_k) +   \nabla_x\cdot (n_k \Theta_k u_k) = \frac{\nu_{kk} n_k}{Z_r^k} n_k (\Lambda_k - \Theta_k)+ \nu_{kj} n_j n_k(T_{kj} - \Theta_k).
\end{split}
\label{relax3}
\end{align} for $k=1,2$.  $\Lambda_k$ is still determined using equation \eqref{internal}. 

In addition,  \eqref{BGK} and \eqref{kin_Temp} are still consistent. If we multiply the equations for species $k$ of \eqref{BGK} and \eqref{kin_Temp}  by $v$ and integrate with respect to $v$ and $\eta_{l_k}$, we get in both cases for the right-hand side 
$$ \nu_{kj} n_j n_k  (u_{jk} - u_k),$$ and if we compute the total internal energy of both equations, we obtain in both cases $$ \frac{1}{2} \nu_{kj} n_k n_j [d \Lambda_{jk} + l_j \Theta_{jk} - ( d \Lambda_j + l_j \Theta_j)],$$ by the use of \eqref{internal}.

Since, we did not change equation \eqref{BGKmix}, we still have conservation of the number of particles if we assume \eqref{density2}, conservation of total momentum if we assume \eqref{convexvel2}, \eqref{veloc2} and conservation of total energy if we assume \eqref{contemp2} and \eqref{temp2}.

The existence and uniqueness of mild solutions of this modified model is proven in \cite{Pirner8}.
\subsection{Equilibrium, H-theorem and entropy inequality}
The mew model has the following characterization of equilibrium
\begin{theorem}[Equilibrium]
Assume $f_1, f_2 >0$ with $f_1$ and $f_2$ independent of $x$ and $t$.
Assume the conditions \eqref{density2}, \eqref{convexvel2}, \eqref{veloc2}, \eqref{contemp2} and \eqref{temp2}, $\delta \neq 1, \alpha \neq 1, l_1,l_2\neq 0$, so that all temperatures are positive. 

Then $f_1$ and $f_2$ are Maxwell distributions with equal mean velocities $u_1=u_2=u_{12}=u_{21}$ and temperatures $T:=T_1^{r}=T_2^{r}=T_1^{t}=T_2^{t}=\Lambda_1=\Lambda_2=\Theta_1=\Theta_2=\Theta_{12}=\Theta_{21}= \Lambda_{12}=\Lambda_{21}$. This means $f_k$ is given by
$$ M_k(x,v,\eta_{l_k},t) = \frac{n_k}{\sqrt{2 \pi \frac{T}{m_k}}^d } \frac{1}{\sqrt{2 \pi \frac{T}{m_k}}^{l_k}} \exp({- \frac{|v-u|^2}{2 \frac{T}{m_k}}}- \frac{|\eta_{l_k}|^2}{2 \frac{T}{m_k}}), \quad k=1,2.$$
\label{equilibrium3}
\end{theorem}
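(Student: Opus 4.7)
The plan is to proceed in analogy with Theorem \ref{equilibrium} for the original model, exploiting the fact that the stationarity assumption turns both the BGK equation \eqref{BGK3} and the modified relaxation equation \eqref{kin_Temp3} into strict convex-combination identities between Maxwellians, from which the equalities of mean velocities and temperatures can be read off moment by moment.

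First I would use that $f_1,f_2$ (and hence all their macroscopic moments and all derived distributions $M_k,M_{kj},\widetilde M_k,\widetilde M_{kj}$) are independent of $x$ and $t$. Then both transport terms vanish and \eqref{BGK3}, \eqref{kin_Temp3} become the identities
\begin{align*}
f_k &= \mu_k M_k + (1-\mu_k) M_{kj}, \\
M_k &= \sigma_k \widetilde M_k + (1-\sigma_k)\widetilde M_{kj},
\end{align*}
for $\{k,j\}=\{1,2\}$, where $\mu_k,\sigma_k \in (0,1)$ since all collision frequencies are positive. These are the two algebraic identities driving the rest of the proof.

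Next I would match the first $v$-moment on both sides of the $f_k$-identity. Using \eqref{density2}, the velocity moment reduces to $u_k=\mu_k u_k+(1-\mu_k)u_{kj}$, so $u_k=u_{kj}$. Substituting \eqref{convexvel2} into $u_1=u_{12}$ and using $\delta\neq 1$ forces $u_1=u_2$; the formula \eqref{veloc2} then gives $u_1=u_2=u_{12}=u_{21}$. The completely analogous argument for the first $\eta_{l_k}$-moment, combined with Definition \ref{defeta}, yields $\bar\eta_{l_1}=\bar\eta_{l_2}$ and the coincidence of $\bar\eta_{l_k,kj}$ with $\bar\eta_{l_k}$.

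With mean velocities and $\bar\eta$'s equal, I would compute the second moments. From the $M_k$-identity I obtain $\Lambda_k = \sigma_k T_k + (1-\sigma_k) T_{kj}$ and $\Theta_k=\sigma_k T_k+(1-\sigma_k)T_{kj}$ with the same weights $\sigma_k$, so $\Lambda_k=\Theta_k$. Plugging this into the definition \eqref{equ_temp} collapses it to $T_k=\Lambda_k=\Theta_k$, and reinserting gives $T_{kj}=T_k$, i.e. $\tfrac{d\Lambda_{kj}+l_k\Theta_{kj}}{d+l_k}=\Lambda_k$. The second moments of the $f_k$-identity then give $T_k^t=\mu_k\Lambda_k+(1-\mu_k)\Lambda_{kj}$ and $T_k^r=\mu_k\Theta_k+(1-\mu_k)\Theta_{kj}$; using \eqref{internal} one recovers $T_k^t=T_k^r=\Lambda_k$ once $\Lambda_{kj}=\Theta_{kj}$ is established. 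Finally, inserting $u_1=u_2$ and $\bar\eta_{l_1}=\bar\eta_{l_2}$ into \eqref{contemp2}–\eqref{temp2} kills the $\gamma|u_1-u_2|^2$ and $\tilde\gamma|\bar\eta_{l_1}-\bar\eta_{l_2}|^2$ contributions, and the remaining small linear system in $(\Lambda_1,\Lambda_2,\Lambda_{12},\Lambda_{21},\Theta_1,\Theta_2,\Theta_{12},\Theta_{21})$, combined with $\Lambda_k=\Theta_k$, $T_{kj}=T_k$ and the hypothesis $\alpha\neq 1$, has the unique solution that all eight quantities equal a common $T$. At this point $M_k=M_{kj}$, the convex combination collapses to $f_k=M_k$, and $f_k$ takes the asserted Maxwellian form.

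The main obstacle I anticipate is the last algebraic step: carefully propagating $\Lambda_k=\Theta_k$ and $T_{kj}=T_k$ through the explicit inter-species formulas \eqref{contemp2}–\eqref{temp2} to conclude $\Lambda_{12}=\Theta_{12}=\Lambda_{21}=\Theta_{21}$ and, finally, equality with $\Lambda_1=\Lambda_2$. The hypothesis $\alpha\neq 1$ enters precisely here, exactly as in the proof for the original model in \cite{Pirner5}; every other step is a direct moment computation enabled by stationarity and by the replacement of the old relaxation kernel by the new kernel in \eqref{kin_Temp3}, which does not affect the equilibrium analysis because the interspecies relaxation still vanishes only at coinciding Maxwellians.
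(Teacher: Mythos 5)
Your proposal is correct and follows essentially the same route as the paper: stationarity forces the right-hand sides of \eqref{BGK3} and \eqref{kin_Temp3} to vanish, the moments of the relaxation identity yield $\Lambda_k=\Theta_k$ (and $T_{kj}=T_k$), and the remaining equalities follow from the unchanged equation \eqref{BGK3} exactly as in Theorem \ref{equilibrium}, with $\delta\neq 1$ and $\alpha\neq 1$ entering where you say they do. Your rewriting of the vanishing right-hand sides as convex combinations of Maxwellians, and your explicit ordering (velocities and $\bar\eta$'s first, then second moments), is only a more detailed presentation of the argument the paper gives and then delegates to the earlier proof.
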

\begin{proof}

Equilibrium means that $f_1,f_2, \Lambda_1, \Lambda_2, \Theta_1, \Theta_2$ are independent of $x$ and $t$. 
 Thus in equilibrium the right-hand side of the equations \eqref{BGK3}  and \eqref{kin_Temp3} have to be zero. From the right-hand side of equation \eqref{kin_Temp3}, we obtain 
\begin{align}
\frac{\nu_{kk} n_k}{Z_r^k}  (\widetilde{M}_k - M_k)+ \nu_{kj} n_j \frac{d+l_k}{d} (\widetilde{M}_{kj} - M_k) =0
\label{A}
\end{align}
 If we compute the internal energy of this expression, we obtain
\begin{align}
l_k \Theta_{kj} + d \Lambda_{kj} = d \Lambda_j + l_k \Theta_k
\label{B}
\end{align}
If we now compute the rotational and vibrational temperature of \eqref{A}, and use \eqref{B}, we obtain
\begin{align}
\Lambda_k = \Theta_k
\label{C}
\end{align}
For the other equalities one has to prove, we only use \eqref{C} and the equation \eqref{BGK3}. But since the equation \eqref{BGK3} has not changed compared to the old model in section \ref{sec2.2}, the rest of the proof is exactly the same as in the proof of theorem \ref{equilibrium}.
\end{proof}
Next, we want to prove the H-theorem for the modified model. 
\begin{theorem}[H-theorem for mixture]
Assume $f_1, f_2 >0$. Assume 
 $\alpha, \delta\neq 1, l_1,l_2 \neq 0.$
Assume the relationship between the collision frequencies \eqref{coll}, the conditions for the interspecies Maxwellians \eqref{density2}, \eqref{convexvel2}, \eqref{veloc2}, \eqref{contemp2} and \eqref{temp2} and the positivity of all temperatures, then
\begin{align*}
\sum_{k=1}^2 &[ \nu_{kk} n_k \int \int(M_k - f_k) \ln f_k dv d\eta_{l_k} \\&+ \frac{
\nu_{kk} n_k \max\lbrace 1, z_1, z_2 \rbrace}{z_k}
 \int \int(\widetilde{M}_k - M_k) \ln M_k dv d\eta_{l_k}] \\&+ 
 \nu_{12} n_1 \max\lbrace 1, z_1, z_2 \rbrace
  \int \int (\widetilde{M}_{12} - M_1) \ln M_1 dv d\eta_{l_1} \\&+ 
  \nu_{21} n_2 \max\lbrace 1,z_1, z_2 \rbrace
  \int \int(\widetilde{M}_{21} - M_2) \ln M_2 dv d\eta_{l_2} \\ &+\nu_{12} n_2 \int \int(M_{12} - f_1) \ln f_1 dv d\eta_{l_1} + \nu_{21} n_1 \int \int (M_{21} - f_2) \ln f_2 dv d\eta_{l_2}  \leq 0,
\end{align*}
with equality if and only if $f_1$ and $f_2$ are Maxwell distributions with equal mean velocities and all temperatures coincide. 
\label{H-theorem3}
\end{theorem}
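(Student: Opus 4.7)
My strategy is to mimic the proof of Theorem \ref{H-theorem} from \cite{Pirner5}, adapting it to the modified relaxation equation \eqref{kin_Temp3}, whose right-hand side now features $\widetilde{M}_{kj}$ in place of the $M_{kj}$ and $f_k$ couplings that appeared in \eqref{kin_Temp}. I split the left-hand side of the inequality into a \emph{self-species block} (terms whose prefactor is proportional to $\nu_{kk}n_k$) and a \emph{cross-species block} (terms with prefactors $\nu_{12}n_j$ or $\nu_{21}n_j$), bound each block independently, and then read off the equality characterization by tracing back the equality cases of the individual estimates.

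\textbf{Self-species block.} For each $k=1,2$ I apply Lemma \ref{one_species} scaled by $\nu_{kk}n_k$ to control the pair
\[
\nu_{kk}n_k\left[\int\!\int(M_k-f_k)\ln f_k\,dv\,d\eta_{l_k} + \int\!\int(\widetilde{M}_k-M_k)\ln M_k\,dv\,d\eta_{l_k}\right] \leq 0.
\]
The surplus weight $\nu_{kk}n_k(\max\{1,z_1,z_2\}/z_k-1)\geq 0$ on $\int(\widetilde{M}_k-M_k)\ln M_k$ then only needs a pointwise sign, and the moment computation already carried out in the proof of Lemma \ref{Mtilde} gives
\[
\int\!\int(\widetilde{M}_k-M_k)\ln M_k\,dv\,d\eta_{l_k} = \tfrac{(d+l_k)n_k}{2}-\tfrac{d n_k T_k}{2\Lambda_k}-\tfrac{l_k n_k T_k}{2\Theta_k},
\]
which is $\leq 0$ by AM--GM applied to $(d\Lambda_k+l_k\Theta_k)(d\Theta_k+l_k\Lambda_k)\geq(d+l_k)^2\Lambda_k\Theta_k$, with equality iff $\Lambda_k=\Theta_k$.

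\textbf{Cross-species block.} I apply the standard convexity estimate $\int(G-f)\ln f\,dv\,d\eta_{l_k}\leq H_k(G)-H_k(f)$ to each of the four cross-species integrals. An analogue of Lemma \ref{Mtilde} for the pair $(\widetilde{M}_{kj},M_{kj})$ (the same concavity-of-$\ln$ argument applies since $T_{kj}=(d\Lambda_{kj}+l_k\Theta_{kj})/(d+l_k)$ is a convex combination) gives $H_k(\widetilde{M}_{kj})\leq H_k(M_{kj})$. What remains is a linear combination of differences $H_k(M_{kj})-H_k(M_k)$ and $H_k(M_{kj})-H_k(f_k)$ that is closed by \eqref{eq:theo2.7} (Lemma~3.5 of \cite{Pirner5}); the constant $\max\{1,z_1,z_2\}$ provides the slack that lets me absorb the $\widetilde{M}_{kj}$ weights into this Lemma-3.5-style estimate.

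\textbf{Main obstacle and equality case.} The delicate part is the cross-species bookkeeping: the prefactors $\nu_{12}n_1$, $\nu_{21}n_2$, $\nu_{12}n_2$, $\nu_{21}n_1$ do not pair up symmetrically, so the variant of \eqref{eq:theo2.7} actually needed (for the mixed $\widetilde{M}_{kj}/M_{kj}$ combination) has to be verified directly from the explicit Gaussian formulas \eqref{BGKmix3}, \eqref{Max_equ3} and the symmetry relation \eqref{coll3}. Once both blocks are non-positive, the equality statement is inherited from the individual steps: Lemma \ref{one_species} forces $M_k=f_k$ together with $\Lambda_k=\Theta_k=T_k^{t}=T_k^{r}$, and the cross-species inequality forces equal mean velocities and a common temperature across species, giving exactly the characterization claimed.
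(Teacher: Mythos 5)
Your self-species block is correct and is essentially the paper's argument: Lemma \ref{one_species} handles the weight-one part, and the surplus weight $\nu_{kk}n_k\bigl(\max\{1,z_1,z_2\}/z_k-1\bigr)\geq 0$ multiplies
$\int\!\int(\widetilde M_k-M_k)\ln M_k\,dv\,d\eta_{l_k}=\tfrac{(d+l_k)n_k}{2}-\tfrac{dn_kT_k}{2\Lambda_k}-\tfrac{l_kn_kT_k}{2\Theta_k}\leq 0$, exactly as you compute. The gap is in the cross-species block. Your inequality $H_k(\widetilde M_{kj})\leq H_k(M_{kj})$ is true (same concavity-of-$\ln$ argument as Lemma \ref{Mtilde}), but it steers you into a dead end: once every $\widetilde M_{kj}$ has been replaced by $M_{kj}$ and \eqref{eq:theo2.7} has been used to push each weighted $H_k(M_{kj})$ up to $H_k(M_k)$, you are left with a nonnegative multiple of $\sum_k \nu\,(H_k(M_k)-H_k(f_k))$, and $H_k(M_k)\leq H_k(f_k)$ is \emph{not} available. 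The Maxwellian $M_k$ carries the temperatures $(\Lambda_k,\Theta_k)$, which agree with $f_k$'s $(T_k^{t},T_k^{r})$ only in the combination $d\Lambda_k+l_k\Theta_k=dT_k^{t}+l_kT_k^{r}$; hence $M_k$ is not the entropy minimizer for $f_k$'s moments (only the isotropic $\widetilde M_k$ is), and the sign of $H_k(M_k)-H_k(f_k)$ is indeterminate. The paper avoids this by proving the \emph{tilde} analogue of Lemma 3.5, namely \eqref{In3}: $\nu_{12}n_2H(\widetilde M_{12})+\nu_{21}n_1H(\widetilde M_{21})\leq \nu_{12}n_2H(\widetilde M_1)+\nu_{21}n_1H(\widetilde M_2)$. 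Combined with \eqref{In2}, the $\pm H(M_k)$ contributions cancel and the whole cross block telescopes to $\nu_{12}n_2\bigl(H(\widetilde M_1)-H(f_1)\bigr)+\nu_{21}n_1\bigl(H(\widetilde M_2)-H(f_2)\bigr)\leq 0$, which is the Gibbs inequality for the isotropic Maxwellian. You need \eqref{In3}, not $H_k(\widetilde M_{kj})\leq H_k(M_{kj})$.

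On your declared ``main obstacle'': the asymmetric prefactors $\nu_{12}n_1$, $\nu_{21}n_2$ in the statement are a typo. The term entering the entropy production from \eqref{kin_Temp3} is $\nu_{kj}n_j(\widetilde M_{kj}-M_k)$, so the weights on the $\widetilde M_{12}$ and $\widetilde M_{21}$ integrals should be $\nu_{12}n_2$ and $\nu_{21}n_1$ --- the same weights that appear in Lemma 3.5 --- and the paper's own proof defines $S$ with exactly these. So no new variant of \eqref{eq:theo2.7} is required; but as written your proposal leaves that step unverified, and together with the dead end above the cross-species block is not established.
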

\begin{proof}
The fact that $ \nu_{kk} n_k \int \int(M_k - f_k) \ln f_k dv d\eta_{l_k} + \frac{
\nu_{kk} n_k \max\lbrace 1, z_1, z_2 \rbrace}{z_k}
\int \int(\widetilde{M}_k - M_k) \ln M_k dv d\eta_{l_k} \leq 0, k=1,2$ can be proven in the same way as lemma \ref{one_species}. In the second step of the proof of lemma \ref{one_species}, we can use the first inequality of lemma 3.1 and therefore $\frac{
 \max\lbrace 1, z_1, z_2 \rbrace}{z_k}$ can be estimated from below by $1$.
 
 It remains to prove that the rest is non-positive.
Let us define 
\begin{align*}
S :=  \nu_{12} n_2
 \max\lbrace 1, z_1, z_2 \rbrace \int \int(\widetilde{M}_{12} - M_1) \ln M_1 dv d\eta_{l_1}\\ + 
  \nu_{21} n_1
 \max\lbrace 1, z_1, z_2 \rbrace \int \int(\widetilde{M}_{21} - M_2) \ln M_2 dv d\eta_{l_2} \\ +\nu_{12} n_2 \int \int(M_{12} - f_1) \ln f_1 dv d\eta_{l_1} + \nu_{21} n_1 \int \int(M_{21} - f_2) \ln f_2 dv d\eta_{l_2}
\end{align*}
The task is to prove that $S\leq 0$.
Since the function $h(x)= x \ln x -x$ is strictly convex for $x>0$, we have $h'(f) (g-f) \leq h(g) - h(f)$ with equality if and only if $g=f$. So \begin{align}
(g-f) \ln f  \leq g \ln g - f \ln f +f -g 
\label{convex}
\end{align}
Consider now $S$ and apply the inequality \eqref{convex} to each of the terms in $S$.
{\footnotesize
\begin{align*}
\nu_{12} n_2 
\max\lbrace 1, z_1, z_2 \rbrace [ \int \int \widetilde{M}_{12} \ln \widetilde{M}_{12}  dv d\eta_{l_1} - \int \int M_1 \ln M_1 dv d\eta_{l_1} \\ + \int \int M_1 dv d\eta_{l_1} - \int \int \widetilde{M}_{12} dv d\eta_{l_1} ]\\ + \nu_{12} n_2 \max\lbrace 1, z_1, z_2 \rbrace[ \int \int \widetilde{M}_{21} \ln \widetilde{M}_{21} dv d\eta_{l_2} - \int \int M_2 \ln M_2 dv d\eta_{l_2}\\+ \int \int M_2 dv\eta - \int \int \widetilde{M}_{21} dv d\eta_{l_2} ]\\+ \nu_{12} n_2 [ \int \int M_{12} \ln M_{12} dv d\eta_{l_1} - \int \int f_1 \ln f_1 dv d\eta_{l_1} + \int \int f_1 dv d\eta_{l_1} - \int \int M_{12} dv d\eta_{l_1} ]\\  + 
\nu_{21} n_1
 [ \int \int M_{21} \ln M_{21} dv d\eta_{l_2} - \int \int f_2 \ln f_2 dv d\eta_{l_2} + \int f_2 dv d\eta_{l_2} - \int M_{21} dv d\eta_{l_2}],
\end{align*}}
with equality if and only if $\widetilde{M}_{kj}= M_k$ and $M_{kj}=f_k$. By computing the macroscopic quantities of this equation, one observes that equality means that $f_1$ and $f_2$ are Maxwell distributions with equal mean velocities and all temperatures coincide. 

Since $\widetilde{M}_{kj}$, $M_{kj}$, $f_k$ and $M_k$ have the same density, the right-hand side reduces to
\begin{align}
\begin{split}
\nu_{12} n_2 
\max\lbrace 1, z_1, z_2 \rbrace[ \int \int \widetilde{M}_{12} \ln \widetilde{M}_{12}  dv d\eta_{l_1} - \int \int M_1 \ln M_1 dv d\eta_{l_1}  ]\\ + \nu_{12} n_2 \max\lbrace 1, z_1, z_2 \rbrace[\int \int \widetilde{M}_{21} \ln \widetilde{M}_{21} dv d\eta_{l_2} - \int \int M_2 \ln M_2 dv d\eta_{l_2}]\\+ \nu_{12} n_2 [\int \int M_{12} \ln M_{12} dv d\eta_{l_1} -\int \int f_1 \ln f_1 dv d\eta_{l_1}  ]\\  + 
\nu_{21} n_1
 [\int \int M_{21} \ln M_{21} dv d\eta_{l_2} -\int \int f_2 \ln f_2 dv d\eta_{l_2} ],
 \label{In1}
 \end{split}
\end{align}
In lemma 3.5 in \cite{Pirner5} we proved the following inequality
\begin{align}
\begin{split}
\nu_{12} n_2 \int \int M_{12} \ln M_{12} dv d\eta_{l_1}  + \nu_{21} n_1 \int \int M_{21} \ln M_{21} dv d\eta_{l_2} \\ \leq \nu_{12} n_2 \int \int M_1 \ln M_1 dv d\eta_{l_1} + \nu_{21} n_1 \int \int M_2 \ln M_2 dv d\eta_{l_2}.
\end{split}
\label{In2}
\end{align}
In the same way we can also prove
\begin{align}
\begin{split}
\nu_{12} n_2 \int \int \widetilde{M}_{12} \ln \widetilde{M}_{12} dv d\eta_{l_1}  + \nu_{21} n_1 \int  \int \widetilde{M}_{21} \ln \widetilde{M}_{21} dv d\eta_{l_2} \\ \leq \nu_{12} n_2 \int \int \widetilde{M}_1 \ln \widetilde{M}_1 dv d\eta_{l_1} + \nu_{21} n_1 \int \int \widetilde{M}_2 \ln \widetilde{M}_2 dv d\eta_{l_2}.
\end{split}
\label{In3}
\end{align}
With this inequality and lemma \ref{Mtilde}, we see that the first two terms in \eqref{In1} are non-positive. Therefore we can estimate $\max \lbrace 1, z_1, z_2 \rbrace$ from below by $1$. Now, we use \eqref{In2} and \eqref{In3}, All in all, we obtain
\begin{align*}
\begin{split}
\nu_{12} n_2 
[ \int \int \widetilde{M}_1 \ln \widetilde{M}_1  dv d\eta_{l_1} - \int \int M_1 \ln M_1 dv d\eta_{l_1}  ]\\ + \nu_{12} n_2 [\int \int \widetilde{M}_{2} \ln \widetilde{M}_{2} dv d\eta_{l_2} - \int \int M_2 \ln M_2 dv d\eta_{l_2}]\\+ \nu_{12} n_2 [\int \int M_{1} \ln M_{1} dv d\eta_{l_1} -\int \int f_1 \ln f_1 dv d\eta_{l_1}  ]\\  + 
\nu_{21} n_1
 [\int \int M_{2} \ln M_{2} dv d\eta_{l_2} -\int \int f_2 \ln f_2 dv d\eta_{l_2} ]\\ = \nu_{12} n_2 [\int \int \widetilde{M}_{1} \ln \widetilde{M}_{1} dv d\eta_{l_1} -\int \int f_1 \ln f_1 dv d\eta_{l_1}  ]\\  + 
\nu_{21} n_1
 [\int \int \widetilde{M}_{2} \ln \widetilde{M}_{2} dv d\eta_{l_2} -\int \int f_2 \ln f_2 dv d\eta_{l_2} ],
 \end{split}
\end{align*}
That this is non-positive is proven in the proof of theorem \ref{one_species}.
\end{proof}
Define  the
 total entropy \begin{align*}
 \begin{split}
 H(f_1,f_2) = \int (f_1 \ln f_1  + \max \lbrace 1, z_1, z_2 \rbrace M_1 \ln M_1) dv d\eta_{l_1} \\+ \int (f_2 \ln f_2+ \max \lbrace 1, z_1, z_2 \rbrace M_2 \ln M_2 ) dvd\eta_{l_2}.
 \end{split}
 \end{align*} 
 Then, with theorem  \ref{H-theorem3}, one can deduce the following corollary.
  \begin{cor}[Entropy inequality for mixtures]
Assume $f_1, f_2 >0$, 
Assume relationship \eqref{coll}, the conditions \eqref{density2}, \eqref{convexvel2}, \eqref{veloc2}, \eqref{contemp2} and \eqref{temp2} and the positivity of all temperatures, then we have the following entropy inequality
\begin{align*}
\partial_t \left(  H(f_1,f_2) \right) \\+ \nabla_x \cdot \big(\int  v (f_1 \ln f_1  + \max \lbrace 1, z_1, z_2 \rbrace M_1 \ln M_1) dv d\eta_{l_1} \\ + \int v( f_2 \ln f_2+ \max \lbrace 1, z_1, z_2 \rbrace M_2 \ln M_2 ) dv d\eta_{l_2} \big) \leq 0,
\end{align*}
with equality if and only if $f_1$ and $f_2$ are Maxwell distribution and all temperatures coincide.
\end{cor}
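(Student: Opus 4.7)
The plan is to reduce the entropy inequality to the H\nobreakdash-theorem of Theorem \ref{H-theorem3} by differentiating $H(f_1,f_2)$ in time, using the evolution equations \eqref{BGK3} and \eqref{kin_Temp3} to substitute for $\partial_t f_k$ and $\partial_t M_k$, and then separating the transport contributions (which produce the claimed flux divergence) from the collision contributions (which produce precisely the functional bounded in Theorem \ref{H-theorem3}).

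Concretely, I would apply the chain rule $\partial_t (u\ln u)=(\ln u+1)\partial_t u$ to each of the four summands in $H(f_1,f_2)$. For the transport part, the identity $(\ln u+1)(-v\cdot\nabla_x u)=-v\cdot\nabla_x(u\ln u)=-\nabla_x\cdot(v\,u\ln u)$ (valid since $v$ is independent of $x$) delivers, after integration in $v$ and $\eta_{l_k}$, exactly the divergence term written in the statement. For the collision part, I substitute
\[
\partial_t f_k=\nu_{kk}n_k(M_k-f_k)+\nu_{kj}n_j(M_{kj}-f_k),
\]
\[
\partial_t M_k=\tfrac{\nu_{kk}n_k}{z_k}(\widetilde M_k-M_k)+\nu_{kj}n_j(\widetilde M_{kj}-M_k),
\]
multiplied respectively by $(\ln f_k+1)$ and $\max\{1,z_1,z_2\}(\ln M_k+1)$. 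The constant ``$+1$'' contributions integrate to zero since $M_k$, $M_{kj}$, $\widetilde M_k$ and $\widetilde M_{kj}$ all share the density $n_k$ of $f_k$ (by \eqref{density2} and the definitions \eqref{Max_equ3}, \eqref{Maxwellian}), so $\int(M_k-f_k)\,dv\,d\eta_{l_k}=\int(M_{kj}-f_k)\,dv\,d\eta_{l_k}=\int(\widetilde M_k-M_k)\,dv\,d\eta_{l_k}=\int(\widetilde M_{kj}-M_k)\,dv\,d\eta_{l_k}=0$.

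What remains is precisely the left-hand side of the inequality in Theorem \ref{H-theorem3}: the terms with $\ln f_k$ carry weights $\nu_{kk}n_k$ and $\nu_{kj}n_j$, while the terms with $\ln M_k$ carry weights $\max\{1,z_1,z_2\}\,\nu_{kk}n_k/z_k$ and $\max\{1,z_1,z_2\}\,\nu_{kj}n_j$, which match the coefficients in the H\nobreakdash-theorem statement (recall $1/z_k=(d+l_k)/(d Z_r^k)$). Invoking Theorem \ref{H-theorem3} therefore gives $\partial_t H(f_1,f_2)+\nabla_x\cdot(\cdots)\le 0$, with equality iff $f_1,f_2$ are Maxwellians with common mean velocity and a single common temperature, which is exactly the equality characterisation of Theorem \ref{H-theorem3}.

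The only potentially delicate point is bookkeeping: one must check that the coefficient $\max\{1,z_1,z_2\}$ sits in front of the right terms so that each collision integral acquires exactly the weight required by Theorem \ref{H-theorem3}, and that all density-balance identities needed to kill the ``$+1$'' contributions are available. Both checks are mechanical given the structure established in Section \ref{sec2.2} and the definitions in this section, so no further estimate is needed beyond invoking the H\nobreakdash-theorem already proved.
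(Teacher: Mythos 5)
Your proposal is correct and follows exactly the route the paper intends: the corollary is stated as a direct consequence of Theorem \ref{H-theorem3}, obtained by differentiating $H(f_1,f_2)$, extracting the flux divergence from the transport terms, killing the ``$+1$'' contributions via equality of densities, and recognising the remaining collision integrals (with weights $\nu_{kj}n_j\max\{1,z_1,z_2\}$ on the $\ln M_k$ cross terms, as in the paper's proof of the H-theorem) as the functional shown to be non-positive there. No further comment is needed.
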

\begin{remark}
Note that the entropy inequality for the this model has less assumptions than the entropy inequality for the model in the previous section.
\end{remark}
\subsection{Entropy dissipation estimates and convergence rate to equilibrium}
With the new model, the proof of the entropy production estimate and the proof of the convergence rate to equilibrium are much simpler and contain less assumptions.
\begin{theorem}
\label{conv3}
 Assume that $(f_1, f_2, M_1, M_2)$ is a solution of \eqref{BGK3} coupled with \eqref{kin_Temp3} and \eqref{internal}. Then, in the space homogeneous case, we have the following convergence rate of the distribution functions $f_1$ and $f_2$:
 {\small
\begin{align*}
||f_k - \widetilde{M}_k||_{L^1(dv d\eta_{l_k})}  \leq 4 e^{-\frac{1}{4} \widetilde{C} t} \left(\sum_{k=1}^2 \left( H_k(f_k^0|\widetilde{M}_k^0)+ 2 \max \lbrace 1, z_1, z_2 \rbrace H_k(M_k^0|\widetilde{M}_k^0)\right) \right)^{\frac{1}{2}}.
\end{align*}}
where $\widetilde{C}$ is given by $$\widetilde{C}= \min \left\lbrace  \nu_{11} n_1 +  \nu_{12} n_2 , \nu_{22} n_2 + \nu_{21} n_1, \frac{\nu_{11} n_1}{z_1} + \nu_{12} n_2, \frac{\nu_{22} n_2}{z_2} + \nu_{21} n_1 \right\rbrace.$$
\end{theorem}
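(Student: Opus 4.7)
My plan is to mimic the structure of the proof of Theorem \ref{conv1}, but with the modified model \eqref{kin_Temp3} the argument becomes substantially simpler because the evolution of $M_k$ no longer contains the extra terms $\nu_{kk} n_k (M_k - f_k)$ and $\nu_{kj} n_j (M_{kj} - f_k)$. I would work with the functional
\begin{equation*}
\mathcal{H}(t) := \sum_{k=1}^{2} \Bigl[ H_k(f_k|\widetilde{M}_k) + 2\max\{1,z_1,z_2\}\, H_k(M_k|\widetilde{M}_k) \Bigr]
\end{equation*}
and compute $\frac{d}{dt}\mathcal{H}$ in the space-homogeneous case. Exactly as in the proof of Theorem \ref{conv1}, the terms containing $\ln \widetilde{M}_k$ vanish since $\ln \widetilde{M}_k$ is a linear combination of $1$, $v$ and $|v|^2 + |\eta_{l_k}|^2$ and our model conserves mass, momentum and total energy; the terms $\int f_k \partial_t \widetilde{M}_k / \widetilde{M}_k$ and $\int M_k \partial_t \widetilde{M}_k / \widetilde{M}_k$ reduce to $\partial_t n_k = 0$, because $f_k$, $M_k$ and $\widetilde{M}_k$ share density, mean velocity and internal energies.

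Next I would insert $\partial_t f_k$ from \eqref{BGK3} and $\partial_t M_k$ from \eqref{kin_Temp3} and apply the convexity inequality $(g-f)\ln f \leq h(g) - h(f)$ with $h(x) = x\ln x - x$ to each of the four integrals that appear for each species. Using that $f_k$ and the relevant Maxwellians share the same density, this turns the entropy production into differences $H(M_k) - H(f_k)$, $H(M_{kj}) - H(f_k)$, $H(\widetilde{M}_k) - H(M_k)$ and $H(\widetilde{M}_{kj}) - H(M_k)$. For the mixture terms I would invoke two inequalities: the one already established in Lemma~3.5 of \cite{Pirner5},
\begin{equation*}
\nu_{12} n_2 H(M_{12}) + \nu_{21} n_1 H(M_{21}) \leq \nu_{12} n_2 H(M_1) + \nu_{21} n_1 H(M_2),
\end{equation*}
together with its direct analogue for $\widetilde{M}_{kj}$ (proven by the same Jensen-type argument, as noted in the proof of Theorem~\ref{H-theorem3}).

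After collecting species $k$ terms, a short rearrangement yields the cleaner bound
\begin{equation*}
\frac{d}{dt}\mathcal{H} \leq -\sum_{k} (\nu_{kk} n_k + \nu_{kj} n_j) H_k(f_k|\widetilde{M}_k) - \sum_{k}\Bigl[\,\alpha\Bigl(\tfrac{\nu_{kk} n_k}{z_k} + \nu_{kj} n_j\Bigr) - (\nu_{kk} n_k + \nu_{kj} n_j)\Bigr] H_k(M_k|\widetilde{M}_k),
\end{equation*}
with $\alpha = 2\max\{1,z_1,z_2\}$. The main verification step, and the only one requiring care, is to check that with this choice of $\alpha$ the bracketed coefficient is at least $\tfrac{\alpha}{2}\bigl(\tfrac{\nu_{kk} n_k}{z_k} + \nu_{kj} n_j\bigr)$ regardless of whether $z_k \leq 1$ or $z_k \geq 1$: in the first regime $\alpha = 2$ and $\nu_{kk} n_k/z_k \geq \nu_{kk} n_k$ makes it immediate, while in the second regime $\alpha \geq 2 z_k$ suffices. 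Writing this comparison carefully is what I expect to be the main (though still straightforward) obstacle, since it is what really forces the factor $2\max\{1,z_1,z_2\}$ in the weighted entropy rather than the more naive choice $2$.

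Once this dissipation estimate is in place I would conclude as follows: bounding both coefficient blocks from below by $\frac{1}{2}\widetilde{C}$ with $\widetilde{C}$ as in the statement, I get $\frac{d}{dt}\mathcal{H} \leq -\frac{1}{2}\widetilde{C}\,\mathcal{H}$; Gronwall's inequality then gives $\mathcal{H}(t) \leq e^{-\frac{1}{2}\widetilde{C} t} \mathcal{H}(0)$. Finally, since $\mathcal{H}(t) \geq H_k(f_k|\widetilde{M}_k)$, the Ciszar--Kullback inequality $\|f_k - \widetilde{M}_k\|_{L^1} \leq 2\sqrt{2\,H_k(f_k|\widetilde{M}_k)}$ (as used in \cite{Matthes}) produces the square root and halves the exponent, delivering the claimed rate $e^{-\frac{1}{4}\widetilde{C} t}$ and the constant $4$ in front of the initial-data bracket.
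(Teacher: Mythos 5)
Your proposal is correct and uses the same key ingredients as the paper's proof: the weighted relative-entropy functional $\sum_k\bigl[H_k(f_k|\widetilde{M}_k)+2\max\{1,z_1,z_2\}H_k(M_k|\widetilde{M}_k)\bigr]$, convexity of $h(x)=x\ln x-x$, the two Jensen-type mixture inequalities \eqref{In2} and \eqref{In3}, Lemma \ref{Mtilde}, Gronwall and Ciszar--Kullback. The one place where you organize things differently is in how the dissipation estimate is closed. The paper first derives the inequality \eqref{add} for the functional with weight $\max\{1,z_1,z_2\}$ (choosing the weights $\max\{1,z_1,z_2\}/z_k$ and $\max\{1,z_1,z_2\}$ inside $D_k$ so that, after bounding them below by $1$, the $H(M_k)$ contributions cancel and only $-H_k(f_k|\widetilde{M}_k)$ survives), then exploits the fact that \eqref{kin_Temp3} is a closed system in the Maxwellians to derive the independent estimate \eqref{add2} for $\sum_k H_k(M_k|\widetilde{M}_k)$ alone, and finally adds \eqref{add} and $\max\{1,z_1,z_2\}$ times \eqref{add2}, which is where the factor $2\max\{1,z_1,z_2\}$ and the prefactor $\tfrac12$ in front of $\widetilde{C}$ come from. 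You instead differentiate the full weighted functional in one pass and absorb the positive term $(\nu_{kk}n_k+\nu_{kj}n_j)H_k(M_k|\widetilde{M}_k)$ (coming from $H(M_k)-H(f_k)=H_k(M_k|\widetilde{M}_k)-H_k(f_k|\widetilde{M}_k)$) into the bracket $\alpha(\nu_{kk}n_k/z_k+\nu_{kj}n_j)-(\nu_{kk}n_k+\nu_{kj}n_j)$, checking in the two regimes $z_k\le 1$ and $z_k\ge 1$ that this bracket dominates $\tfrac{\alpha}{2}(\nu_{kk}n_k/z_k+\nu_{kj}n_j)$; this coefficient comparison is correct and delivers exactly the same $\widetilde{C}$ and the same exponent $\tfrac14\widetilde{C}$. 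Both routes are valid; yours is slightly more compact, while the paper's two-step split makes explicit that the $(M_1,M_2)$ subsystem relaxes autonomously. One cosmetic point: the Ciszar--Kullback constant you quote, $2\sqrt{2}$, is smaller than the constant $4$ used in the statement, so the claimed bound still follows.
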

\begin{proof}
We consider the entropy production of species $1$ defined by
\begin{align*}
D_1(f_1, f_2) = &- \int \int  \nu_{11} n_1 \ln f_1 ~ (M_1 - f_1) dv d\eta_{l_1} \\&- \int \int  \nu_{12} n_2 \ln f_1 ~(M_{12} - f_1) dv d\eta_{l_1} \\&- \frac{\max \lbrace 1, z_1, z_2 \rbrace }{z_1} \int \int \nu_{11} n_1 (\widetilde{M}_1 - M_1) \ln M_1 dv d\eta_{l_1} \\&- \max \lbrace 1, z_1, z_2 \rbrace \int \int \nu_{12} n_2 (\widetilde{M}_{12} - M_1) \ln M_1 dv d\eta_{l_1}.
\end{align*}
Define the function  $h(x) := x \ln x-x$. The function satisfies  $h'(x)= \ln x$, so we can deduce 
\begin{align*}
D_1(f_1, f_2) = &- \int \int  \nu_{11} n_1 h'(f_1) (M_1 - f_1) dv d\eta_{l_1} \\ &- \int \int \nu_{12} n_2 h'(f_1) (M_{12} - f_1) dv d\eta_{l_1}\\&- \frac{\max \lbrace 1, z_1, z_2 \rbrace}{z_1} \int \int \nu_{11} n_1 (\widetilde{M}_1 - M_1) h'(M_1) dv d\eta_{l_1}  \\&- \max \lbrace 1, z_1, z_2 \rbrace \int \int \nu_{12} n_2 h'(M_1) (\widetilde{M}_{12} -M_1) dv d\eta_{l_1}.
\end{align*}
Since $h$ is convex, we obtain
\begin{align}
\begin{split}
D_1(f_1, f_2) & \geq \int \int \nu_{11} n_1 (h(f_1) - h(M_1)) dv d\eta_{l_1} \\&+ \int \int  \nu_{12} n_2 ( h(f_1) - h(M_{12})) dv d\eta_{l_1} \\&+  \frac{\max \lbrace 1, z_1, z_2 \rbrace}{z_1} \int \int \nu_{11} n_1(h(M_1) - h(\widetilde{M}_1)) dv d\eta_{l_1} \\&+ \max \lbrace 1, z_1, z_2 \rbrace \int \int \nu_{12} n_2 (h(M_1) - h(\widetilde{M}_{12})) dv d\eta_{l_1}
\\&= 
 \nu_{11} n_1 (H(f_1) - H(M_1)) +  \nu_{12} n_2 ( H(f_1) - H(M_{12})) \\&+ \frac{\max \lbrace 1, z_1, z_2 \rbrace}{z_1} \nu_{11} n_1 (H(M_1)- H(\widetilde{M}_1)) \\&+ \max \lbrace 1, z_1, z_2 \rbrace \nu_{12} n_1 (H(M_1) - H(\widetilde{M}_{12})).
 \end{split}
 \label{eq:D_2}
\end{align}
In the same way we get a similar expression for $D_2(f_2,f_1)$  just exchanging the indices $1$ and $2$. \\ 
With  \eqref{In2} and \eqref{In3}, we can deduce from \eqref{eq:D_2} that
\begin{align*}
\begin{split}
&D_1(f_1,f_2)+ D_2(f_2,f_1) \geq \left(\nu_{11} n_1 +\nu_{12} n_2 \right)( H(f_1)-H(M_1))\\ &+ \left(\nu_{22} n_2 +\nu_{21} n_1 \right) (H(f_2)-H(M_2)) + \frac{\max \lbrace 1, z_1, z_2 \rbrace}{z_1} \nu_{11} n_1 (H(M_1) -H(\widetilde{M}_1))\\& + \frac{\max \lbrace 1, z_1, z_2 \rbrace}{z_2} \nu_{22} n_2 (H(M_2) - H(\widetilde{M}_2)) + \max \lbrace 1, z_1, z_2 \rbrace \nu_{12} n_2 (H(M_1) -H(\widetilde{M}_1)) \\&+ \max \lbrace 1, z_1, z_2 \rbrace \nu_{21} n_1 (H(M_2) - H(\widetilde{M}_2))
\end{split}
\end{align*}
According to lemma \ref{Mtilde}, we see that $H(M_k) - H(\widetilde{M}_k)$ is non-negative. Therefore, we can estimate $\max \lbrace 1, z_1, z_2 \rbrace$ and $\frac{\max \lbrace 1, z_1, z_2 \rbrace}{z_k}$ by $1$ from below. We obtain
\begin{align}
\begin{split}
D_1(f_1,f_2)&+D_2(f_2,f_1) \geq (\nu_{11} n_1 + \nu_{12} n_2) (H(f_1) - H(\widetilde{M}_1))\\& + (\nu_{22}n_2 + \nu_{21} n_1) (H(f_2) - H(\widetilde{M}_2))
 \end{split}
\label{prod3}
\end{align}
Now, we want to consider the time derivative of the relative entropies
\begin{align*}
H_k(f_k|\widetilde{M}_k)+ \max \lbrace 1, z_1, z_2 \rbrace H_k(M_k|\widetilde{M}_k)&= H(f_k)-H(\widetilde{M}_k) +\max \lbrace 1, z_1, z_2 \rbrace( H(M_k)- H(\widetilde{M}_k))\\& = \int \int f_k \ln \frac{f_k}{\widetilde{M}_k} dv d\eta_{l_k} \\&+ \max \lbrace 1, z_1, z_2 \rbrace \int \int M_k \ln \frac{M_k}{\widetilde{M}_k} dv d\eta_{l_k}
\end{align*}
for $k=1,2$. The last equality follows from the fact that $f_k$ and $\widetilde{M}_k$ have the same densities, mean velocities and internal energies. These functions we want to relate
 to the entropy production in the following. First, we use product rule and obtain
\begin{align}
\begin{split}
&\frac{d}{dt}  \left( \sum_{k=1}^2 \left( \int \int f_k \ln \frac{f_k}{\widetilde{M}_k} dv d\eta_{l_k} + \int \int M_k \ln \frac{M_k}{\widetilde{M}_k} dv d\eta_{l_k} \right) \right)\\ &= \int \int \left( \partial_t f_1 \ln f_1  + \partial_t f_1 - \partial_t f_1 \ln \widetilde{M}_1 - \frac{f_1}{\widetilde{M}_1} \partial_t \widetilde{M}_1 \right) dv d\eta_{l_1}\\& +\int \int \left( \partial_t f_2 \ln f_2  + \partial_t f_2 - \partial_t f_2 \ln \widetilde{M}_2 -  \frac{f_2}{\widetilde{M}_2}\partial_t \widetilde{M}_2 \right) dv d\eta_{l_2}\\ &+ \int \int \left( \partial_t M_1 \ln M_1  + \partial_t M_1 - \partial_t M_1 \ln \widetilde{M}_1 - \frac{M_1}{\widetilde{M}_1} \partial_t \widetilde{M}_1 \right) dv d\eta_{l_1}\\& +\int \int \left( \partial_t M_2 \ln M_2  + \partial_t M_2 - \partial_t M_2 \ln \widetilde{M}_2 -  \frac{M_2}{\widetilde{M}_2}\partial_t \widetilde{M}_2 \right) dv d\eta_{l_2}
\end{split}
\label{eq:derentr_2}
\end{align}
The terms  $\int \int \partial_t f_k dv d\eta_{l_k}$ and $\int \int \partial_t M_k dv d\eta_{l_k}$ vanish since the densities are constant in the space homogeneous case.
By using the explicit expression of $\partial_t \widetilde{M}_k$ given by 
\eqref{partialMtilde},
we can compute by using that $f_k$, $M_k$ and $\widetilde{M}_k$ have the same densities, mean velocities and internal energies that $$\int f_k \frac{\partial_t \widetilde{M}_k}{\widetilde{M}_k} dvd\eta_{l_k}=\int M_k \frac{\partial_t \widetilde{M}_k}{\widetilde{M}_k} dvd\eta_{l_k} =\partial_t n_k =0,\quad k=i,e,$$ since $n_k$ is constant in the space-homogeneous case. 
On the right-hand side of \eqref{eq:derentr_2}, we insert $\partial_t f_1$ and $\partial_t f_2$, and $\partial_t M_1$ and $\partial_t M_2$ from equation \eqref{BGK3}  and \eqref{kin_Temp3}. We obtain 
\begin{align*}
\frac{d}{dt} &\left(\sum_{k=1}^2 \left( H_k(f_k|\widetilde{M}_k)+ \max \lbrace 1, z_1, z_2 \rbrace H_k(M_k|\widetilde{M}_k)\right) \right)\\&=\int \int \left(  \nu_{11} n_1 (M_1 - f_1) + \nu_{12} n_2 (M_{12} - f_1) \right) \ln f_1 dv d\eta_{l_1} \\&+ \int \int \left( \nu_{22} n_2 (M_2 - f_2) +  \nu_{21} n_2 (M_{21} - f_2) \right) \ln f_2 dv d\eta_{l_2} \\&+\frac{\max \lbrace 1, z_1, z_2 \rbrace}{z_1} \int \int ( \nu_{11} n_1 (\widetilde{M}_1 - M_1)  + \max \lbrace 1, z_1, z_2 \rbrace \nu_{12} n_2 (\widetilde{M}_{12} - M_1)) \ln M_1 dv d\eta_{l_1} \\ &+ \int \int ( \frac{\max \lbrace 1, z_1, z_2 \rbrace}{z_2}\nu_{22} n_2 (\widetilde{M}_2 - M_2)  + \max \lbrace 1, z_1, z_2 \rbrace \nu_{21} n_1 (\widetilde{M}_{21} - M_2)) \ln M_2 dv d\eta_{l_2}.
\end{align*}
Indeed, the terms with $\ln \widetilde{M}_1$ and $\ln \widetilde{M}_2$ vanish since $\ln M_1$ and $\ln M_2$ are a linear combination of $1,v$ and $|v|^2+ |\eta_{l_k}|^2$ and our model satisfies the conservation of the number of particles, total momentum and total energy (see section 3.1 in \cite{Pirner5}). All in all, we obtain
\begin{align*}
\begin{split}
\frac{d}{dt} \left(\sum_{k=1}^2 \left( H_k(f_k|\widetilde{M}_k)+\max \lbrace 1, z_1, z_2 \rbrace H_k(M_k|\widetilde{M}_k)\right) \right) = - (D_1(f_1,f_2)+ D_2(f_2,f_1)).
\end{split}
\end{align*}
 Using \eqref{prod3} we obtain
 {\scriptsize
\begin{align}
\begin{split}
\frac{d}{dt} &\left(\sum_{k=1}^2 \left( H_k(f_k|\widetilde{M}_k)+ \max \lbrace 1, z_1, z_2 \rbrace H_k(M_k|\widetilde{M}_k)\right) \right)\\ &\leq -\left[\left(\nu_{11} n_1 + \nu_{12} n_2 \right) H(f_1|\widetilde{M}_1) + \left(\nu_{22} n_2 + \nu_{21} n_1 \right) H(f_2 |\widetilde{M}_2)\right]  \\ &\leq - \text{min} \left\lbrace  \nu_{11} n_1 +  \nu_{12} n_2 , \nu_{22} n_2 + \nu_{21} n_1 \right\rbrace ( H( f_1| \widetilde{M}_1) + H(f_2|\widetilde{M}_2).
\label{add}
\end{split}
\end{align}}
In a similar way, we can compute independently
{\small
\begin{align}
\frac{d}{dt} \left( \sum_{k=1}^2 H_k(M_k|\widetilde{M}_k) \right) \leq - \left[ (\frac{\nu_{11} n_1}{z_1} + \nu_{12} n_2) H(M_1|\widetilde{M}_1) + (\frac{\nu_{22} n_2}{z_2} + \nu_{21} n_1 ) H(M_2|\widetilde{M}_2) \right]
\label{add2}
\end{align}}
Now, we add \eqref{add} and $\max \lbrace 1, z_1, z_2 \rbrace$ times \eqref{add2}. We obtain
 {\scriptsize
\begin{align*}
\begin{split}
\frac{d}{dt} &\left(\sum_{k=1}^2 \left( H_k(f_k|\widetilde{M}_k)+ 2 \max \lbrace 1, z_1, z_2 \rbrace H_k(M_k|\widetilde{M}_k)\right) \right)\\  &\leq - \frac{1}{2} \text{min} \left\lbrace  \nu_{11} n_1 +  \nu_{12} n_2 , \nu_{22} n_2 + \nu_{21} n_1, \frac{\nu_{11} n_1}{z_1} + \nu_{12} n_2, \frac{\nu_{22} n_2}{z_2} + \nu_{21} n_1 \right\rbrace ( H( f_1| \widetilde{M}_1) + H(f_2|\widetilde{M}_2)\\&+ 2 \max \lbrace 1, z_1, z_2 \rbrace (H_k(M_k|\widetilde{M}_k))).
\end{split}
\end{align*}}
Define $\widetilde{C}:= \text{min} \left\lbrace  \nu_{11} n_1 +  \nu_{12} n_2 , \nu_{22} n_2 + \nu_{21} n_1, \frac{\nu_{11} n_1}{z_1} + \nu_{12} n_2, \frac{\nu_{22} n_2}{z_2} + \nu_{21} n_1 \right\rbrace,$ then we can deduce an exponential decay with Gronwall's inequality
\begin{align*}
H(f_k|\widetilde{M}_k) &\leq \left(\sum_{k=1}^2 \left( H_k(f_k|\widetilde{M}_k)+ 2 \max \lbrace 1, z_1, z_2 \rbrace H_k(M_k|\widetilde{M}_k)\right) \right)\\& \leq e^{-\widetilde{C}t} \left(\sum_{k=1}^2 \left( H_k(f_k^0|\widetilde{M}_k^0)+ 2 \max \lbrace 1, z_1, z_2 \rbrace H_k(M_k^0|\widetilde{M}_k^0)\right) \right), \quad k=1,2.
\end{align*}
With the Ciszar-Kullback inequality (see \cite{Matthes}) we get
\begin{align*}
||f_k - M_k||_{L^1(dv)} & \leq 4 (H(f_k|\widetilde{M}_k))^{\frac{1}{2}} \\ & \leq 4 e^{-\frac{1}{2} Ct} [\left(\sum_{k=1}^2 \left( H_k(f_k^0|\widetilde{M}_k^0)+ 2 \max \lbrace 1, z_1, z_2 \rbrace H_k(M_k^0|\widetilde{M}_k^0)\right) \right)]^{\frac{1}{2}}.
\end{align*}
\end{proof}
Especially, we observe that  it is possible to choose $z_k \leq 1$ or $z_k \geq 1$ meaning that the model allows for slow and fast relaxation of the temperatures, and leads in the space-homogeneous case to a convergence of the distribution functions towards Maxwell distributions with a reasonable rate of convergence.

\section*{Acknowledgements}
We thank Christian Klingenberg for helpful revisions that improved this paper. We thank him and Gabriella Puppo for many discussions on polyatomic modelling and multi-species kinetics.



\end{document}